\documentclass[11pt]{article}
\usepackage[utf8]{inputenc}
\usepackage{amsmath}
\usepackage{amsfonts}
\usepackage{amssymb}
\usepackage{amsthm}
\usepackage{amsrefs}
\usepackage[all]{xy}
\usepackage{enumerate}

\setlength{\parskip}{3pt}
\setlength{\topmargin}{-.2in}
\linespread{1.3}

\textheight=8in
\textwidth=6.25in
\oddsidemargin=.125in
 
\theoremstyle{plain}
  \newtheorem{thm}{Theorem}[section]
  \newtheorem{lem}[thm]{Lemma}
  
  \newtheorem{cor}[thm]{Corollary} 
  \newtheorem*{thm*}{Theorem}
  \newtheorem*{cor*}{Corollary} 
\theoremstyle{definition}
  \newtheorem{defn}[thm]{Definition}
  \newtheorem{rmk}[thm]{Remark}
  \newtheorem{ex}[thm]{Example}

\theoremstyle{plain}

\DeclareMathOperator{\im}{im}
  
\numberwithin{equation}{section}
\allowdisplaybreaks

\begin{document}
\title{$A_\infty$-Minimal Model on Differential Graded Algebras}
\author{Jiawei Zhou}
\date{\today}
\maketitle

\begin{abstract}
The rational homotopy type of a differential graded algebra (DGA) can be represented by a family of tensors on its cohomology, which constitute an $A_\infty$-minimal model of this DGA. When only the cohomology is needed to determine the rational homotopy type, then the DGA is called formal. By a theorem of Miller, a compact $k$-connected manifold is formal if its dimension is not greater than $4k+2$. We expand this theorem and a result of Crowley-Nordstr\"{o}m to prove that if the dimension of a compact $k$-connected manifold $N\leq (l+1)k+2$, then its de Rham complex has an $A_\infty$-minimal model with $m_p=0$ for all $p\geq l$. Separately, for an odd-dimensional sphere bundle over a formal manifold, we prove that its de Rham complex has an $A_\infty$-minimal model with only $m_2$ and $m_3$ non-trivial. In the special case of a circle bundle over a formal symplectic manifold satisfying the hard Lefschetz property, we give a necessary condition for formality which becomes sufficient when the base symplectic manifold is of dimension six or less. 

\end{abstract}

\tableofcontents

\section{Introduction}

In rational homotopy theory, a differential graded algebra (DGA) is formal if it is quasi-isomorphic to its cohomology. Thus, we can study its homotopy type by its cohomology. If the de Rham complex of a manifold is a formal DGA, the manifold itself is called formal. A natural question is, what conditions or characteristics ensure that a manifold is formal?

For a $k$-connected compact manifold $M$, a theorem of Miller \cite{miller} states that $M$ is formal if its dimension $N\leq 4k+2$. When the dimension $N > 4k+2$, examples constructed by Dranishnikov-Rudyak \cite{Dranishnikov et al} and Fern\'andez-Mu\~noz \cite{Fernandez et al} showed that $k$-connected compact manifolds may no longer be formal without imposing further conditions. For instance, Cavalcanti \cite{caval4} considered imposing conditions on the $(k+1)$-Betti number.  He showed that the manifold is formal when $b_{k+1}=1$ and the dimension $N\leq 4k+4$. For $b_{k+1}=2,3$, Cavalcanti \cite{caval4} had to introduce even more restrictive conditions to prove formality. However, not much can be said about formality for compact $k$-connected manifold when $b_{k+1}$ is larger.

Since higher dimensional $k$-connected compact manifold is non-formal in general, we can consider relaxing the formal condition and instead ask what properties in addition to cohomology will determines a manifold's rational homotopy type. Crowley and Nordstr\"om \cite{cn} showed that when $N\leq 5k+2$, the rational homotopy type of a $k$-connected manifold $M$ can be determined by its cohomology $H^*(M)$ together with a $4$-tensor acting on a subspace of $\left(H^*(M)\right)^{\otimes 4}$, which they called the Bianchi-Massey tensor. This $4$-tensor represents the additional data beyond cohomology needed to determine rational homotopy up to dimension $N\leq 5k+2$. It is reasonable to expect that as the dimension increases, even more data is needed to describe rational homotopy. As we will show, the additional required data for a $k$-connected manifold of arbitrary dimension can be succinctly represented in terms of an $A_\infty$-algebraic structure on $H^*(M)$. Such a structure is called an $A_\infty$-minimal model of $\Omega^*(M)$. 
This leads to our first result:

\begin{thm}\label{connected-intro}
Suppose $M$ is an $N$-dimensional $k$-connected compact manifold. If $l\geq 3$ such that $N\leq (l+1)k+2$, then $\Omega^*(M)$ has an $A_\infty$-minimal model with $m_p=0$ for $p\geq l$. 
\end{thm}

When $l=3$, the $A_\infty$ structure is equivalent to the DGA structure and the statement is just the theorem of Miller \cite{miller}. When $l=4$, it becomes the theorem of Crowley and Nordstr\"om \cite{cn}. For higher $l$, it implies that the homotopy type of $\Omega^*(M)$ is determined by its cohomology together with operations $m_3,\ldots,m_{l-1}$.

Another question is whether we can construct formal manifolds from a given formal manifold. One common way of obtaining new manifolds is constructing a fiber bundle. In this paper, we consider a relatively simple fiber bundle where the fiber is an odd dimensional sphere. A motivation for studying this type of bundle is its relation to the $A_3$-algebra on a symplectic manifold $(M,\omega)$, constructed by Tsai, Tseng and Yau \cite{tty}. By the work of Tanaka and Tseng \cite{tt}, Tsai-Tseng-Yau's $A_3$-algebra is quasi-isomorphic to the de Rham cochain complex of an $S^{2p+1}$ bundle over the symplectic manifold with Euler class $\omega^p$.

As we shall see, the total sphere bundle space can be non-formal even when the base is formal. For example, it is non-formal for the circle bundle over the torus $T^{2n}$ (See Example \ref{ex-torus}). On the other hand, circle bundles over projective spaces or over Euclidean spaces are both formal. To quantify the non-formality of a sphere bundle over a base that is formal, we can consider giving the cohomology of the circle bundle an $A_3$-algebraic structure. It turns out that this $A_3$ structure is sufficient to completely determine the rational homotopy of the odd-sphere bundle over a formal base.
 
\begin{thm}\label{extension-intro}
Let $M$ be a formal manifold, $\omega\in \Omega^*(M)$ be an even-dimensional integral differential form, and $X$ be the sphere bundle over $M$ with Euler class $\omega$. Then $X$ is formal if $\omega$ is exact. When $\omega$ is non-exact, $\Omega^*(X)$ has an $A_\infty$-minimal model with $m_p=0$ for all $p$ except for $p=2$ or $3$.
\end{thm}

Biswas, Fern\'andes, Mu\~noz and Tralle \cite{biswas et al} obtained a similar statement when the base symplectic manifold $M$ satisfies the hard Lefschetz property instead of being formal. The hard Lefchetz property on a symplectic manifold is analogous to the $dd^c$-lemma on a complex manifold. They both can be viewed as special cases of the $dd^{\mathcal{J}}$-lemma of generalized complex geometry \cite{caval4}\cite{merkulov}. In the complex manifolds case, there is the well known result of Deligne, Griffiths, Morgan and Sullivan \cite{deli} that all complex manifolds satisfying the $dd^c$-lemma are formal. By analogy, this may suggest that the hard Lefschetz property would relate to the formality on symplectic manifolds. In fact, it was conjectured by Babenko and Taimanov \cite{bt} in 1998 that a simply-connected compact symplectic manifold is formal if and only if it satisfies the hard Lefschetz property. Both directions of the statement are now known to be false. Gompf \cite{gompf} constructed a simply-conected 6-manifold which does not satisfy the hard Lefschetz property. This example is formal by Miller's criterion \cite{miller} that all simply-connected compact 6-manifolds are formal. The other direction was studied in \cite{irtu}\cite{lo}, and was further clarified by Cavalcanti \cite{caval} (see also \cite{caval3}) who gave a simply connected non-compact symplectic manifold with the hard Lefschetz property, but is not formal.

Although the hard Lefschetz property has no relation with the formality of a symplectic manifold, it is nevertheless useful for our consideration of the formality of circle bundles. In Section 4.2, we focus on a special class of circle bundles where the base symplectic manifold is formal and satisfies the hard Lefschetz property. A circle bundle over the symplectic manifold with Euler class $\omega$ is called a Boothby-Wang fibration. For such fibrations, we have found a necessary condition for formality. When the dimension of the base space $\dim M \leq 6$, this condition is also sufficient.

\begin{thm}\label{condition equivalent to formal geometircally-intro}
Let $(M^{2n},\omega)$ be a formal symplectic manifold satisfying the hard Lefschetz property. Suppose $\omega$ is integral, then there exists a circle bundle $X$ over $M$ whose Euler class is $\omega$. In the following statements, (1) implies (2), and (2) implies (3). Moreover, when $\dim M\leq 6$, (3) also implies (1).
\begin{enumerate}[(1)]
\item X is formal.
\item All generalized Massey products of $\Omega^*(X)$ vanish.
\item For arbitrary $x_1,\ldots,x_k\in PH^r(M)$, $y_1,\ldots y_k\in PH^s(M)$ satisfying $r+s\leq n+1$ and
$$
\sum_{i=1}^{k}x_iy_i=\omega\alpha
$$
for some $\alpha\in H^{r+s-2}(M)$, and for arbitrary $z\in PH^{n+1-s}(M)$, we have
\begin{align}\label{necessary equation for formal-intro}
\sum_{i=1}^{k}x_iL^{-1}(y_iz)=\sum_{i=1}^{k}L^{-1}(x_iy_i)z.
\end{align}

Here, $PH^r(M)=\ker(\omega^{n-r+1}:H^r(M)\to H^{2n-r+2})$ for $r\leq n$. Given an element $a\in PH^k(M)$, we define operator $L^{-1}$ as 
$$
L^{-1}a=0,\quad \text{and}\quad L^{-1}(\omega^ja)=\omega^{j-1}a \text{ for } 1\leq j\leq n-k.
$$
\end{enumerate} 
\end{thm}

Simpler than the vanishing of the Massey products, equation (\ref{necessary equation for formal-intro}) is also a necessary  condition for formality and can be checked straightforwardly. It gives a quick way to determine non-formality for certain classes of manifolds.  For example, we have the following statement:

\begin{cor}
When $M$ is a compact K\"ahler manifold and $[\omega]$ is reducible, $X$ is non-formal.
\end{cor}

This paper is organized as follows. In Section 2, we review the definitions and some basic properties of DGA and $A_\infty$-algebra. Section 3 consists of the proof of Theorem \ref{connected-intro}. In Section 4, we prove Theorem \ref{extension-intro}, then consider the special case of circle bundles. We motivate the statement of Theorem \ref{condition equivalent to formal geometircally-intro} and provide the proof.  A main ingredient of our proof including a detailed analysis of the required $A_\infty$-morphism is provided in the Appendix.

\textbf{Acknowledgements.} The author would like to thank his Ph.D. advisor, Li-Sheng Tseng, for his patient guidance and valuable advice. The author also appreciates his postdoctoral mentor Si Li, for suggesting many ideas and related questions. The author also thanks Vladimir Baranovsky, Xiaojun Chen and Matthew Gibson for their helpful suggestions. This work is supported by National Key Research and Development Program of China  (NO. 2020YFA0713000).

\section{Preliminaries}

In this section, we recall some basic definitions and properties of differential graded algebras and $A_\infty$-algebras (c.f. \cite{compgeo}\cite{keller}).

\subsection{Differential graded algebras}
\begin{defn}
A (commutative) \textbf{differential graded algebra} (\textbf{DGA}) over a field $k$ is a graded $k$-algebra $A=\bigoplus_{i\geqslant 0}A^i$ together with a $k$-linear map $d:A\to A$ such that
\begin{enumerate}[i)]
\setlength{\itemsep}{-5pt}
\item $k\subset A^0$;
\item The multiplication is graded commutative: For $x\in A^i, y\in A^j$, we have $x\cdot y=(-1)^{ij}y\cdot x;$
\item The Leibniz product rule holds: $d(x\cdot y)=dx\cdot y+(-1)^i x\cdot dy$;
\item $d^2=0$.
\end{enumerate}
\end{defn}

\begin{ex}
Let $M$ be a manifold. Its differential forms form a DGA $(\Omega^*(M),d,\wedge)$, where $d$ is the differential operator and $\wedge$ is the wedge product of differential forms. 
\end{ex}

\begin{ex}
Given a DGA $(A,d)$, its cohomology $(H^*(A),d)$ is also a DGA, with $d=0$. The multiplication on $H^*(A)$ is naturally induced by the multiplication on $A$.
\end{ex}

\begin{defn}
Let ($A,d_A$) and ($B,d_B$) be two DGAs. A \textbf{DGA-homomorphism} is a $k$-linear map $f:A\rightarrow B$ such that
\begin{enumerate}[i)]
\setlength{\itemsep}{-5pt}
\item $f(A^i)\subset B^i$;
\item $f(x\cdot y)=f(x)\cdot f(y)$;
\item $d_B\circ f=f\circ d_A$:
$$
\xymatrix
{ \ldots \ar[r]^{d_A} & A^k \ar[r]^{d_A} \ar[d]^f & A^{k+1} \ar[d]^f \ar[r]^{d_A} & \ldots \\
\ldots \ar[r]^{d_B} & B^k \ar[r]^{d_B} & B^{k+1} \ar[r]^{d_B} & \ldots }$$
\end{enumerate}
Naturally, $f$ induces a homomorphism:
$$f^*:H^*(A,d_A)\rightarrow H^*(B,d_B).$$
$f$ is called a \textbf{DGA-quasi-isomorphism} if $f^*$ is an isomorphism.
\end{defn}

\begin{defn}
Two DGAs ($A,d_A$) and ($B,d_B$) are \textbf{equivalent} if there exists a sequence of DGA-quasi-isomorphisms:
$$
\xymatrixcolsep{1.4pc}\xymatrix{
& (C_1,d_{C_1}) \ar[dl] \ar[dr] & &  \ldots\ldots \ar[dl] \ar[dr] & &(C_n,d_{C_n}) \ar[dl] \ar[dr] \\
(A,d_A) &  & (C_2,d_{C_2}) &  & \ldots\ldots &  & (B,d_B)
}
$$
\end{defn}

\begin{defn}
A DGA ($A,d_A$) is called \textbf{formal} if ($A,d_A$) is equivalent to a DGA ($B,d_B$) with $d_B=0$. Identically, ($A,d_A$) is equivalent to ($H^*(A),d=0$) if and only if ($A,d_A$) is formal.

We say a manifold $M$ is \textbf{formal} if its de Rham complex ($\Omega ^*(M),d,\wedge$) is a formal DGA.
\end{defn}

\begin{rmk}
We usually omit the multiplication sign of $DGA$. For example, in the DGA $\Omega^*$ of differential forms, we write $\alpha\wedge\beta$ as $\alpha\beta$, and write $\alpha\wedge\alpha$ as $\alpha^2$ simply. So $\alpha\beta=(-1)^{|\alpha||\beta|}\beta\alpha$, where $|\alpha|,|\beta|$ are the degrees of $\alpha$ and $\beta$ respectively.
\end{rmk}

\subsection{$A_\infty$-algebra}
\begin{defn}
Let $k$ be a field. An \textbf{$A_\infty$-algebra over $k$} is a $\mathbb{Z}$-graded vector space 
$A=\bigoplus_{i\in \mathbb{Z}} A^i$ endowed with graded $k$-linear maps
$$m_p:A^{\otimes p}\to A,\ p\geq1$$
of degree $2-p$ satisfying
\begin{equation}\label{A-infty operation}
\sum_{r+s+t=p}(-1)^{r+st}m_{r+t+1}(\textbf{1}^{\otimes r}\otimes m_s\otimes\textbf{1}^{\otimes t})=0.
\end{equation}
\end{defn}

In particular, when $p=1$, we have $$m_1m_1=0.$$
When $p=2$, we have $$m_1m_2=m_2(m_1\otimes\textbf{1}+\textbf{1}\otimes m_1).$$
If $m_3=0$, $m_2$ is associative. Every DGA is an $A_\infty$-algebra, where $m_1$ is the differential $d$, $m_2$ is the  multiplication, and $m_p=0$ for all $p\geq3$.

\begin{defn}
A \textbf{morphism of $A_\infty$-algebra} $f:(A,m^A)\to (B,m^B)$ is a family of graded maps $f_p:A^{\otimes p}\to B$ of degree $1-p$ such that
\begin{equation}\label{A-infty morphism}
\sum(-1)^{r+st}f_{r+t+1}(\textbf{1}^{\otimes r}\otimes m^A_s\otimes\textbf{1}^{\otimes t})=\sum(-1)^sm^B_r(f_{i_1}\otimes f_{i_2}\otimes\ldots\otimes f_{i_r}).
\end{equation}
where the sum on left-hand side runs over all decompositions $p=r+s+t$, and the sum on the right-hand side runs over all $1\leq r\leq p$ and all decompositions $p=i_1+i_2+\ldots+i_r$. The sign on the right side is given by
$$s=(r-1)(i_1-1)+(r-2)(i_2-1)+\ldots+2(i_{r-2}-1)+(i_{r-1}-1).$$
\end{defn}

Specifically, when $p=1$, we have $$m_1f_1=f_1m_1.$$
$f_1$ also induces a morphism $f_1^*:H^*(A)\to H^*(B)$. The morphism $f$ is called a \textbf{quasi-isomorphism} if $f_1^*$ is an isomorphism.

Alternatively, we can describe an $A_\infty$-algebra by its suspension. In this convention, no negative signs appear. Let $SA$ be a graded vector space such that $(SA)^i=A^{i+1}$. Set $s:A\to SA$, the canonical map of degree -1. Then we can define a family of maps $b_n:(SA)^{\otimes n}\to SA$ corresponding to $m_n$ by
$$
\xymatrixcolsep{3.5pc}\xymatrix{
(SA)^{\otimes n} \ar[r]^{b_n} & SA \\
A \ar[r]^{m_n} \ar[u]^{s^{\otimes n}} & A \ar[u]^s
}
$$
For example, $b_1(s\alpha)=sm_1(\alpha)$ and $b_2(s\alpha,s\beta)=(-1)^{|\alpha|}sm_2(\alpha,\beta)$, where $|\alpha|$ is the degree of $\alpha$.

All $b_n$ are of degree 1, and the  equations of (\ref{A-infty operation}) become equivalent to
$$
\sum_{r+s+t=p}b_{r+t+1}(\textbf{1}^{\otimes r}\otimes b_s\otimes\textbf{1}^{\otimes t})=0.
$$

Similarly, given an $A_\infty$-morphism $f:A\to B$, we can define $(sf)_n:(SA)^{\otimes n}\to SB$ corresponding to $f_n$ by
$$
\xymatrixcolsep{3.5pc}\xymatrix{
(SA)^{\otimes n} \ar[r]^{(sf)_n} & SB \\
A \ar[r]^{f_n} \ar[u]^{s^{\otimes n}} & B \ar[u]^s
}
$$
All $f_n$ are of degree 0, and the equations of (\ref{A-infty morphism}) become equivalent to
$$
\sum (sf)_{r+t+1}(\textbf{1}^{\otimes r}\otimes b^A_s\otimes\textbf{1}^{\otimes r})=\sum b^B_r\big( (sf)_{i_1}\otimes (sf)_{i_2}\otimes\ldots\otimes (sf)_{i_r} \big).
$$
Again, the left-hand side sum runs over all decompositions $p=r+s+t$, and the right-hand side sum runs over all $1\leq r\leq p$ and all decompositions $p=i_1+i_2+\ldots+i_r$.

An $A_\infty$-algebra is quasi-isomorphic to its cohomology equipped with an appropriate $A_\infty$-algebraic structure.

\begin{thm}[Kadeishvili \cite{kadei}, see also \cite{keller}]\label{kadei}
If $(A,m^A)$ is an $A_\infty$-algebra, then $H^*(A)$ has an $A_\infty$-algebraic structure $m$ such that
\begin{enumerate}[i)]
\item $m_1=0$ and $m_2$ is induced by $m_2^A$;
\item There is an quasi-isomorphism of $A_\infty$-algebras $H^*(A)\to A$.
\end{enumerate}
This structure is unique up to isomorphisms of $A_\infty$-algebras.
\end{thm}

\begin{defn}
$(H^*(A),m)$ given above is called an \textbf{$A_\infty$-minimal model} for $(A,m^A)$. We say $(A,m^A)$ is \textbf{formal} if we can choose all $m_p$ to be 0 for $p\geq3$ on its $A_\infty$-minimal model.
\end{defn}

We will give an explicit construction of an $A_\infty$-minimal model, which will be used in later sections. For convenience, we use following notation:

Let $f:(H^*(A),m)\to (A,m^A)$ be an $A_\infty$-morphism. For $p\geq 3$, set
\begin{equation}\label{F_p}
\begin{split}
F_p  &= \sum_{\substack{i_1+\ldots+i_r=p \\ 2\leq r\leq p}}(-1)^{(r-1)(i_1-1)+(r-2)(i_2-1)+\ldots+1\cdot(i_{r-1}-1)}m^A_r(f_{i_1}\otimes\ldots\otimes f_{i_r})\\
& \quad -\sum_{\substack{r+s+t=p \\ 2\leq s\leq p-1}}(-1)^{r+st}f_{r+t+1}(\textbf{1}^{\otimes r}\otimes m_s\otimes \textbf{1}^{\otimes t}).
\end{split}
\end{equation}

$F_p$ is defined by $f_1,\ldots,f_{p-1},m_1,\ldots,m_{p-1},$ and $m^A_1,\ldots,m^A_p$. It then determines $m_p$ and $f_p$. Since $m_1=0$, $m_p$ and $f_p$ need to satisfy $f_1m_p-m^A_1f_p=F_p$. So $m_p$ must be $[F_p]$. Therefore, given an $A_\infty$-algebra $(A,m^A)$, if we want to give $H^*(A)$ an $A_\infty$-algebraic structure $m$ together with a quasi-isomorphism $f:(H^*(A),m)\to (A,m^A)$, we can define $m_p$ and $f_p$ inductively by calculating $F_p$.

Similarly, on the suspension, we set
\begin{equation}\label{sF_p}
(sF)_p=\sum_{\substack{i_1+\ldots+i_r=p \\ 2\leq r\leq p}}b^A_r\big( (sf)_{i_1}\otimes\ldots\otimes (sf)_{i_r} \big)-\sum_{\substack{r+s+t=p \\ 2\leq s\leq p-1}}(sf)_{r+t+1}(\textbf{1}^{\otimes r}\otimes b_s\otimes \textbf{1}^{\otimes t}).
\end{equation}
As $b_1=0$, $b_p$ and $(sf)_p$ need to satisfy $(sf)_1b_p-b^A_1(sf)_p=(sF)_p$.

For an $A_\infty$-morphism $g:A\to B$, we can define $G_p$ and $sG_p$ in a similar way.

\begin{proof}[Proof of Theorem \ref{kadei}]
Let $A^E$ denote the subspace of all $m^A_1$-exact forms in $A$. By the splitting Lemma, we can decompose the space of all $m^A_1$-closed forms as $A^E\oplus A^C$ for $A^C\simeq H^*(A)$, and can decompose $A$ as $A^E\oplus A^C\oplus A^\perp$ for $A^\perp\simeq A/(A^E\oplus A^C)$. Then for each $\alpha\in A^E$, there is a unique $\beta\in A^\perp$ such that $\alpha=m^A_1\beta$. So we can define a map $Q:A^E\to A^\perp$ such that $Q\alpha=\beta$. Then $Qm^A_1=\textbf{1}_{A^E}$ and $m^A_1Q=\textbf{1}_{A^\perp}$, where \textbf{1} is the identity map.

In particular, if $M$ is a Riemannian manifold and $A=\Omega^*(M)$, we can set $A^C$ to be the space of harmonic forms, $A^\perp=\im d^*$, and $Q=d^*$.

Now we define an $A_\infty$-algebra structure $m$ on $H^*(A)$ and a quasi-isomorphism $f:(H^*(A),m)\to (A,m^A)$. For each $[\alpha]\in H^*(A)$, there exists a unique $\alpha_0\in A^C$ representing the cohomology class $[\alpha]$. Set $f_1([\alpha])=\alpha_0$. For $f_2$, set $f_2=Q(f_1m_2-m^A_2(f_1\otimes f_1))$. To define higher $m_p$ and $f_p$, suppose $m_1,\ldots,m_{p-1}$ on $H^*(A)$ and $f_1,\ldots,f_{p-1}$ have been defined, then $m_p=[F_p]$ and $f_p$ needs to satisfy $f_1m_p-m^A_1f_p=F_p$. Set $f_p=Q(f_1m_p-F_p)$. By induction, we can construct an $A_\infty$-minimal model of $A$.
\end{proof}

By the theorem below, a DGA satisfying the definition of formal in the DGA sense is equivalent to satisfying the definition of formal as an $A_\infty$-algebra. So in this context, we will simply say that this DGA is formal.

\begin{thm}[see \cite{keller}]
If A is a DGA, it is formal as a DGA if and only if it is formal as an $A_\infty$-algebra.
\end{thm}

\begin{defn}
An $A_\infty$-algebra $(A,m^A)$ is called \textbf{strictly unital} if there exists some $1_A\in A$ such that $m_1 1_A=0$, $m_2(1_A,\alpha)=m_2(\alpha,1_A)=\alpha$, and $m_p(\alpha_1,\ldots,\alpha_p)=0$ for $p\geq 3$ when some $\alpha_j=1_A$.

An $A_\infty$-morphism $f:(A,m^A)\to(B,m^B)$ is called \textbf{strictly unital} if $A$ and $B$ are strictly unital, $f_1(1_A)=1_B$, and $f_p(\alpha_1,\ldots,\alpha_p)=0$ for $p\geq 2$ when some $\alpha_j=1_A$.
\end{defn}

\begin{thm}\label{strictly-unital}
When $(A,m^A)$ is a strictly unital $A_\infty$-algebra and $1_A$ is non-exact, we can construct a strictly unital minimal model $(H^*(A),m)$, and a strictly unital quasi-isomorphism $f:(H^*(A),m)\to (A,m^A)$.
\end{thm}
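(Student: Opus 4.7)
The plan is to run the inductive construction from the proof of Theorem \ref{kadei}, with two adaptations: choosing the splitting so that the unit lies in $A^C$, and verifying strict unitality by induction on $p$.

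First I would observe that $1_A$ is $m^A_1$-closed and, by hypothesis, non-exact, so $[1_A]$ is a nonzero class in $H^*(A)$. I adjust the decomposition $A = A^E \oplus A^C \oplus A^\perp$ used in the proof of Theorem \ref{kadei} so that $1_A \in A^C$, declare $[1_A]$ to be the unit of the minimal model, and set $f_1([1_A]) := 1_A$. Because $m_2$ on $H^*(A)$ is induced by $m^A_2$ and $1_A$ is a strict unit of $m^A$, this gives $m_2([1_A], [\alpha]) = m_2([\alpha], [1_A]) = [\alpha]$ immediately, while $m_1 = 0$ trivially annihilates $[1_A]$.

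Next I define the higher structure inductively exactly as in the proof of Theorem \ref{kadei}: $m_p := [F_p]$ and $f_p := Q(f_1 m_p - F_p)$, with $F_p$ as in (\ref{F_p}). The substantive claim I would establish, by induction on $p$, is that $m_p(\alpha_1, \ldots, \alpha_p) = 0$ for $p \geq 3$ and $f_p(\alpha_1, \ldots, \alpha_p) = 0$ for $p \geq 2$ whenever some $\alpha_j = [1_A]$. The base $p = 2$ is direct: $f_2([1_A], [\alpha]) = Q(f_1[\alpha] - m^A_2(1_A, f_1[\alpha])) = 0$, and likewise on the right slot. For the inductive step I would evaluate (\ref{F_p}) with $[1_A]$ in position $j$: any inner $f_{i_k}$ or outer $f_{r+t+1}$ of arity $\geq 2$ that receives $[1_A]$ vanishes by induction, and any $m^A_r(\ldots, 1_A, \ldots)$ or $m_s(\ldots, [1_A], \ldots)$ of arity $\geq 3$ vanishes by strict unitality of $m^A$ or by the inductive hypothesis on $m$. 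The terms that survive are exactly those that collapse via the $m^A_2$- or $m_2$-unit axiom, and I would show these pair up and cancel: for $j = 1$ or $j = p$, the single surviving $r = 2$ term of the first sum of (\ref{F_p}) and the single surviving $s = 2$ term of the second sum both reduce to the same $f_{p-1}$ on the tuple with $\alpha_j$ deleted, with matching signs, so they cancel in the difference $F_p$; for interior $j$ the first sum vanishes entirely, while the two $s = 2$ contributions coming from $r = j-1$ (which uses $m_2([1_A], \alpha_{j+1}) = \alpha_{j+1}$) and $r = j-2$ (which uses $m_2(\alpha_{j-1}, [1_A]) = \alpha_{j-1}$) yield the same $f_{p-1}$ output with opposite signs $(-1)^{r+2t}$, again canceling. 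Consequently $F_p = 0$ on any input containing $[1_A]$, so $m_p = [F_p] = 0$ and $f_p = Q(0) = 0$ on those inputs, completing the induction.

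The main obstacle is the sign bookkeeping in this cancellation step: the argument hinges on checking that the $r = 2$ and $s = 2$ contributions really pair off with opposite signs across (\ref{F_p}) for every position $j$ and every admissible decomposition of $p$, rather than merely coinciding up to an unpredictable sign. Once that cancellation is verified, the construction proceeds exactly as in Theorem \ref{kadei}, and the resulting $(H^*(A), m)$ together with $f \colon (H^*(A), m) \to (A, m^A)$ is strictly unital by construction.
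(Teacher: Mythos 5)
Your proposal is correct and follows essentially the same route as the paper: choose the splitting so that $1_A\in A^C$, run the inductive construction of Theorem \ref{kadei}, and show $F_p$ vanishes on unit-containing inputs by the same case analysis on the position $j$ (for $j=1$ or $j=p$ the single surviving arity-$2$ term of each sum cancels across the difference; for interior $j$ the first sum vanishes and the two $s=2$ terms cancel against each other). The only difference is presentational: the paper carries out the sign verification on the suspension, where the right-unit identity acquires the factor $(-1)^{|x|}$, whereas you track the unsuspended signs $(-1)^{r+st}$ directly, and both bookkeepings check out.
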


\begin{proof}
We construct the minimal model by the proof of Theorem \ref{kadei}. Let $1_H\in H^0(A)$ be the cohomology class of $1_A$. Choose $A^C$ such that it contains $1_A$. Then $f_1(1_H)=1_A$. In particular, when $A^{-1}=0$, $1_A$ is the only representative of $1_H$, so it must be in $A^C$. Also $1_A$ cannot be exact in this case.

Since $m_2$ is induced by $m_2^A$, for each $[\alpha]\in H^*(A)$ we have $m_2(1_H,[\alpha])=[m_2(1_A,\alpha)]=[\alpha]$. Similarly $m_2([\alpha],1_H)=[\alpha]$.

To avoid keeping track of plus and minus signs, we will finish the proof on suspension $SA$. Then the equations of $m_2$ become
$$
b_2(s1_H,[s\alpha])=[s\alpha],\quad b_2([s\alpha],s1_H)=(-1)^{|\alpha|}[s\alpha].
$$

$(sf)_2$ is defined by
$$
(sf)_2=Q\circ\Big( b_2\big((sf)_1\otimes(sf)_1\big)-(sf)_1b_2 \Big).
$$
So for arbitrary $sx\in SH^*(A)$, we have
$$
b_2\big((sf)_1\otimes(sf)_1\big)(s1_H,sx)=b_2 \big(s1_A,(sf)_1(sx) \big)=(sf)_1(sx)=(sf)_1b_2(s1_H,sx)
$$
and
$$
b_2\big((sf)_1\otimes(sf)_1\big)(sx,s1_H)=b_2 \big( (sf)_1(sx),s1_A \big)=(-1)^{|x|}(sf)_1(sx)=(sf)_1b_2(sx,s1_H).
$$
This implies $(sf)_2(s1_H,sx)=(sf)_2(sx,s1_H)=sx$.

For $p\geq3$, we prove the statement inductively. Suppose $b_1,\ldots,b_{p-1}$ and $(sf)_1,\ldots,(sf)_{p-1}$ have been constructed and satisfy the condition for strictly unital. $(sF)_p$ is defined as
\begin{align*}
(sF)_p(sx_1,\ldots,sx_p) &= \sum_{\substack{i_1+\ldots+i_r=p \\ 2\leq r\leq p}}b^A_r\big( (sf)_{i_1}\otimes\ldots\otimes (sf)_{i_r} \big)(sx_1,\ldots,sx_p) \\
& \quad -\sum_{\substack{r+s+t=p \\ 2\leq s\leq p-1}}(sf)_{r+t+1}(\textbf{1}^{\otimes r}\otimes b_s\otimes \textbf{1}^{\otimes t})(sx_1,\ldots,sx_p).
\end{align*}
Assume $x_j=1_H$ for some $1\leq j\leq p$. Consider the first term on the right-hand side. For each summand $b^A_r\big( (sf)_{i_1}\otimes\ldots\otimes (sf)_{i_r} \big)(sx_1,\ldots,sx_p)$, there exists some $k$ such that
$$
i_1+\ldots+i_{k-1}<j\leq i_1+\ldots+i_k.
$$
When $i_k>1$, $(sf)_{i_k}$ is acting on the tensor of some elements including $1_H$. So the image is $0$. The non-trivial summand must satisfy $i_k=1$. In this case if $1<j<p$, we have $i_1+\ldots+i_{k-1}>0$ and $i_1+\ldots+i_k=j<p$. Thus, $r\geq3$. Since $b^A_r$ acting on the tensor of some elements including $1_A$ is 0. All summands are trivial. For the same reason, if $j=1$, the only non-trivial summand is
$$
b^A_2 \big( (sf)_1\otimes (sf)_{p-1} \big)(1_H,sx_2,\ldots,sx_p)=(sf)_{p-1}(sx_2,\ldots,sx_p).
$$
If $j=p$, the only non-trivial summand is
$$
b^A_2 \big( (sf)_{p-1}\otimes (sf)_1 \big)(sx_1,\ldots,sx_{p-1},1_H)=(-1)^{|sx_1|+\ldots+|sx_{p-1}|+1}(sf)_{p-1}(sx_1,\ldots,sx_{p-1}).
$$

Now we turn to the second term: $\sum (sf)_{r+t+1}(\textbf{1}^{\otimes r}\otimes b_s\otimes \textbf{1}^{\otimes t})(sx_1,\ldots,sx_p)$. In each summand, $s\leq p-1$, so $r+t+1\geq 2$. If $j\leq r$ or $j>r+s$, $(sf)_{r+t+1}$ is acting on the tensor of some elements including $1_H$. Hence, the image is $0$. When $r<j\leq r+s$ and $s\geq 3$, the summand is also trivial because $b_s$ is acting on the tensor of some elements including $1_H$. So the only non-trivial summand satisfies $j=r+1$ or $j=r+2$, and $s=2$. When $1<j<p$, the second term becomes
\begin{align*}
&\quad (sf)_{p-1}(\textbf{1}^{\otimes(j-1)}\otimes b_2 \otimes \textbf{1}^{\otimes(p-j-1)}+\textbf{1}^{\otimes(j-2)}\otimes b_2 \otimes \textbf{1}^{\otimes(p-j)})(sx_1,\ldots,sx_p) \\
&= \Big( (-1)^{|sx_1|+\ldots+|sx_{j-1}|}+(-1)^{|sx_1|+\ldots+|sx_{j-2}|}(-1)^{|x_{j-1}|} \Big)(sf)_{p-1}(sx_1,\ldots,sx_{j-1},sx_{j+1},\ldots,sx_p) \\
&= 0.
\end{align*}
When $j=1$, the second term becomes
$$
(sf)_{p-1}(b_2 \otimes \textbf{1}^{\otimes(p-2)})(1_H,sx_2,\ldots,sx_p)=(sf)_{p-1}(sx_2,\ldots,sx_p).
$$
When $j=p$, the second term becomes
\begin{align*}
& \quad (sf)_{p-1}(\textbf{1}^{\otimes(p-2)}\otimes b_2)(sx_1,\ldots,sx_{p-1},1_H) \\
&= (-1)^{|sx_1|+\ldots+|sx_{p-2}|}(-1)^{|sx_{p-1}|}(sf)_{p-1}(sx_1,\ldots,sx_{p-1}) \\
&= (-1)^{|sx_1|+\ldots+|sx_{p-1}|+1}(sf)_{p-1}(sx_1,\ldots,sx_{p-1}).
\end{align*}

In each case, the first term is equal to the second term. Therefore, we always have $sF_p(x_1,\ldots,x_p)=0$ if some $x_j=0$. Then $b_p=[sF_p]$ and $sf_p=sQ \big( (sf)_1b_p-sF_p \big)$ are both 0. So $(H^*(A),m)$ and $f$ are strictly unital.
\end{proof}

\section{Minimal model of $k$-connected compact manifold}
We here recall a result of Miller for $k$-connected compact manifolds. A manifold $M$ is called $k$-connected if it is path-connected and its homotopy group $\pi_r(M)=0$ for $1\leq r\leq k$. Our goal in this section is to generalize Miller's result.

\begin{thm}[Miller \cite{miller}]
Let $M$ be an $N$-dimensional $k$-connected compact manifold. If $N\leq 4k+2$, then M is formal.
\end{thm}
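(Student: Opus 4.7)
The plan is to apply the Kadeishvili-style inductive construction from the proof of Theorem~\ref{kadei} to $A=\Omega^*(M)$ equipped with a Riemannian metric: take $A^C=\mathcal{H}^*(M)$ to be harmonic forms, $A^\perp=\im d^*$, and $Q=d^*G$ with $G$ Green's operator. Since $M$ is connected and $\Omega^{-1}(M)=0$, the unit $1$ is non-exact, so Theorem~\ref{strictly-unital} lets us take the resulting minimal model $(H^*(M),m)$ and the quasi-isomorphism $f\colon (H^*(M),m)\to\Omega^*(M)$ to be strictly unital. Then for $p\geq 2$, $m_p$ and $f_p$ vanish on any tensor containing $1\in H^0(M)$. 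By $k$-connectedness, $H^i(M)=0$ for $1\leq i\leq k$, and by Poincar\'e duality on a compact oriented $N$-manifold, $H^i(M)=0$ for $N-k\leq i\leq N-1$. Hence $H^*(M)$ is supported on $\{0\}\cup [k+1,N-k-1]\cup\{N\}$, and for $p\geq 3$ every relevant input to $m_p$ has degree $\geq k+1$.

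I would then prove by induction on $p\geq 3$ that $m_p$ can be taken to be $0$. Assuming $m_3=\cdots=m_{p-1}=0$ and $f_3=\cdots=f_{p-1}=0$ (a legitimate solution of $m_1^Af_q=-F_q$ once $F_q$ is exact), the formula (\ref{F_p}) for $F_p$ collapses to a sum of terms built only from $f_1$, $f_2$, $m_2$, and the wedge product. Every such term has target degree
\[
\sum_{j=1}^p|\alpha_j|+2-p\;\geq\;p(k+1)-(p-2)\;=\;pk+2\;\geq\;3k+2.
\]
Under $N\leq 4k+2$ we have $3k+2>3k+1\geq N-k-1$, so $\deg F_p\geq N-k$; since $H^i(M)=0$ on $[N-k,N-1]$, the class $[F_p]$ can be non-zero only if $\deg F_p=N$. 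For $p\geq 5$ one has $\deg F_p\geq 5k+2>N$, so $F_p$ is identically $0$ and the step is automatic.

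The main obstacle is the top-degree case $\deg F_p=N$, which survives only for $p=3$ and, at the boundary value $N=4k+2$, for $p=4$. Here $[F_p]\in H^N(M)\cong\mathbb{R}$ is detected by $\int_M F_p$, and $F_p$ is a cocycle representative of a Massey-type product on $H^*(M)$. I would dispose of it by a Poincar\'e-duality pairing argument: using the $A_\infty$-relations together with strict unitality, rewrite $\int_M F_p$ as the integral of an expression with one slot occupied by the strict unit $1\in H^0(M)$, forcing the integrand to vanish; equivalently, one may exploit the remaining gauge freedom in the Kadeishvili construction---adjusting $f_{p-1}$ by a suitable harmonic form, which changes only $[F_p]$ in top degree---to cancel the offending class directly. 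Once $[F_p]=0$ has been verified in every degree, we set $m_p=0$ and $f_p=-Q(F_p)$, closing the induction and producing an $A_\infty$-minimal model with only $m_2$ nontrivial, i.e., $M$ is formal.
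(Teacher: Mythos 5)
Your skeleton coincides with the paper's (which obtains this statement as the $l=3$ case of Theorem \ref{longest}): construct the minimal model via Theorem \ref{kadei} with the harmonic decomposition, make it strictly unital via Theorem \ref{strictly-unital}, and use $H^i(M)=0$ for $1\leq i\leq k$ and $N-k\leq i\leq N-1$ to show the only surviving obstructions are $[F_3]$, and $[F_4]$ when $N=4k+2$, landing in top degree $N$. But at exactly that point your proposal has a genuine gap. Your first suggestion --- rewriting $\int_M F_p$ with the strict unit in one slot to ``force the integrand to vanish'' --- cannot work: the top-degree class $[F_3]$ produced by the initial Kadeishvili construction is in general non-zero (it is precisely the Massey-product/Bianchi--Massey data), so no identity can show it vanishes; one must show it is \emph{gauge-trivial}. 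Your second suggestion (adjust $f_{p-1}$ by a closed $\delta_{p-1}$) is indeed the paper's route, but you assert the cancellation ``directly,'' whereas its solvability is the heart of the theorem. Varying $f_{l-1}$ by $\delta_{l-1}$ changes the top-degree class by $\big[b_2^A\big((s\delta)_{l-1}\otimes(sg)_1\big)(\mathbf{1}-\sigma)\big]$ --- and only after one first arranges $(s\delta)_{l-1}=0$ on inputs of total suspended degree $\geq N-1$, so that the terms $(s\delta)_{l-1}(\mathbf{1}^{\otimes r}\otimes b_2\otimes\mathbf{1}^{\otimes t})$ in (\ref{sF-sG}) drop out; you do not address this. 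Since the change is a difference of consecutive cyclic shifts, prescribing the cancellation of $b_l$ along a cyclic orbit is an $l\times l$ circulant linear system, which is consistent if and only if $\sum_{a=0}^{l-1}b_l\sigma^a=0$. The paper's proof rests on exactly this: the cyclic symmetry of the product coming from graded commutativity of $\wedge$ (Condition 4), the Poincar\'e dual bases $b_2(sy^{N-i}_v,sx^i_u)=\delta_{uv}s\mu$ (Condition 3) used to realize the required values of $(s\delta)_{l-1}$, and the explicit solution with coefficients $\tfrac{l-j}{l}$. None of these ingredients appears in your proposal, so the cancellation of $[F_3]$ and $[F_4]$ --- i.e., the theorem itself --- is unproved.

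Two smaller defects. First, taking $f_3=\cdots=f_{p-1}=0$ is not legitimate: $f_q=0$ solves $f_1m_q-m_1^Af_q=F_q$ with $m_q=0$ only when $F_q$ vanishes identically, not merely when it is exact (your own closing formula $f_p=-Q(F_p)$ contradicts the assumption); in the paper, after the gauge corrections the maps $g_l,h_{l+1}$ are nonzero, and the final induction kills $b''_p$ for $p\geq l+2$ by a degree count on the \emph{output} of $b''_p$, which needs no vanishing of the $f$'s. Second, you assume $M$ oriented, while the statement allows non-orientable $M$; the paper treats that case separately using twisted Poincar\'e duality, $H^{N-i}(M)\simeq H_i(\tilde{M})=0$, so that $H^i(M)=0$ for all $i\geq N-k$ including $i=N$, and then the degree count alone suffices since the top-degree obstruction space is zero.
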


Consider a strictly unital $A_\infty$-algebra $(A,m^A)$ with the following properties.
\begin{enumerate}
\item $A^i=0$ for $i<0$ or $i>N$.
\item $H^i(A)$ is finite dimensional. $\dim H^0(A)= \dim H^N(A)=1$.
\item Take a basis $\left\{ x^{(i)}_1,\ldots,x^{(i)}_{\beta_i}\right\}$ for each $H^i(A)$, where $\beta_i$ is the dimension of $H^i(A)$. Let $\mu$ denote the only generator $x^{(N)}_1$ of $H^N(A)$. Then there exists a "dual" basis $\left\{ y^{(N-i)}_1,\ldots,y^{(N-i)}_{\beta_i}\right\}\in H^{N-i}(A)$ such that on the suspension
$$
b_2(sy^{(N-i)}_v,sx^{(i)}_u)=\delta_{uv}s\mu=
\begin{cases}
s\mu, & \text{if } u=v, \\
0, & \text{if } u\neq v.
\end{cases}
$$
\item For arbitrary $s\alpha_1,\ldots,s\alpha_p\in SA$, let $n=|s\alpha_1|+\ldots+|s\alpha_p|$ be the total degree. Let $\sigma$ denote a permutation such that
$$
\sigma(s\alpha_1,\ldots,s\alpha_p)=(-1)^{|s\alpha_1|(n-|s\alpha_1|)} (s\alpha_2,\ldots,s\alpha_p,s\alpha_1).
$$
Then the cyclic sum
$$
b_p(\mathbf{1}+\sigma+\sigma^2+\ldots+\sigma^{p-1})=0
$$
for all $p\geq 2$.
\end{enumerate}

In particular, if $M$ is a compact orientable manifold, $\Omega^*(M)$ satisfies the conditions above. Condition 3 follows from Poincar\'e duality. Condition 4 follows from a simple calculation below. For $\alpha,\beta\in \Omega^*(M)$,
\begin{align*}
& \quad b_2(s\alpha,s\beta)+b_2\sigma(s\alpha,s\beta) \\
&= (-1)^{|\alpha|}sm_2(\alpha,\beta)+(-1)^{|s\alpha|(|s\alpha|+|s\beta|-|s\alpha|)}b_2(s\beta,s\alpha) \\
&= (-1)^{|\alpha|}(-1)^{|\alpha||\beta|}sm_2(\beta,\alpha)+(-1)^{(|\alpha|+1)(|\beta|+1)}(-1)^{|\beta|}b_2(s\otimes s)(\beta,\alpha) \\
&= (-1)^{(|\alpha|+1)|\beta|}sm_2(\beta,\alpha)+(-1)^{(|\alpha|+1)|\beta|+1}sm_2(\beta,\alpha) \\
&= 0.
\end{align*}
And $b_p=0$ for $p\geq 3$.

We claim that $\sigma^p=\mathbf{1}$ when acting on $(SA)^{\otimes p}$. By definition,
$$
\sigma^p (s\alpha_1,\ldots,s\alpha_p)=\prod_{j=1}^p(-1)^{|s\alpha_j|(n-|s\alpha_j|)}\cdot (s\alpha_1,\ldots,s\alpha_p).
$$
When $n$ is odd, $(-1)^{|s\alpha_j|(n-|s\alpha_j|)}=1$ for every $j$. When $n$ is even, $(-1)^{|s\alpha_j|(n-|s\alpha_j|)}=(-1)^{|s\alpha_j|}$. So $\prod_{j=1}^p(-1)^{|s\alpha_j|(n-|s\alpha_j|)}=\prod_{j=1}^p(-1)^{|s\alpha_j|}=(-1)^n=1$.

\begin{thm}\label{longest}
Suppose $(A,m^A)$ is an $A_\infty$-algebra satisfying the conditions above. If $H^i(A)=0$ for $1\leq r\leq k$, and $l\geq 3$ is an integer such that $N\leq (l+1)k+2$, then $A$ has an $A_\infty$-minimal model with $m_p=0$ for $p\geq l$.
\end{thm}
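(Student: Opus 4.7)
The plan is to construct the minimal model $(H^*(A),m)$ inductively via the procedure in the proof of Theorem~\ref{kadei} and to show that every operation $m_p$ with $p\geq l$ can be chosen to vanish. The argument rests on three ingredients: strict unitality (Theorem~\ref{strictly-unital}), a degree count, and the Poincar\'e pairing together with cyclic symmetry.

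First I invoke Theorem~\ref{strictly-unital} to obtain a strictly unital minimal model, so that $m_p(x_1,\dots,x_p)=0$ whenever some $x_i=1_H$ and $p\geq 2$. It therefore suffices to verify $m_p=0$ on inputs with every $x_i\in H^{\geq k+1}(A)$ (reading the connectivity hypothesis as $H^i(A)=0$ for $1\leq i\leq k$). Such a value lies in degree $\sum|x_i|+2-p\geq pk+2$. For $p\geq l+2$ this is already at least $(l+2)k+2>(l+1)k+2\geq N$ (using $k\geq 1$), so by condition 1 the image is zero and $m_p=0$ automatically.

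The heart of the proof is ruling out $p=l$ and $p=l+1$. Using condition 3, the pairing $\langle x,y\rangle$ on $H^*(A)$ defined as the coefficient of $s\mu$ in $b_2(sx,sy)$ is non-degenerate on each complementary pair $H^i\otimes H^{N-i}$, so $m_p=0$ iff the scalar
\[
\mu_p(x_1,\dots,x_{p+1}):=\langle m_p(x_1,\dots,x_p),x_{p+1}\rangle
\]
vanishes for every choice of $x_i$. Assuming the minimal model inherits the cyclic symmetry of condition 4, $\mu_p$ is invariant under cyclic permutation of its $p+1$ arguments (up to the usual Koszul signs). For $\mu_p$ to be non-zero the total degree must equal $N+p-2$; if every $|x_i|\geq k+1$ then $(p+1)(k+1)\leq N+p-2$, i.e.\ $(p+1)k+3\leq N$, which for $p\geq l$ contradicts $N\leq(l+1)k+2$. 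Inputs with some $x_j=1_H$ for $j\leq p$ vanish by strict unitality directly; when $x_{p+1}=1_H$, cycling that factor into the first slot and applying strict unitality yields $\mu_p=0$ as well. Thus $\mu_p\equiv 0$, and non-degeneracy of $\langle-,-\rangle$ gives $m_p=0$.

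The step I expect to be the main obstacle is promoting the cyclic symmetry of condition 4 from $(A,m^A)$ to the minimal model $(H^*(A),m)$. My plan is to make the inductive construction of $f_1,f_2,\dots$ compatible with the pairing from condition 3 --- in the de Rham setting this amounts to taking $f_1$ to land in harmonic forms and $Q=d^*G$, whose adjointness with respect to the integration pairing propagates cyclicity to each $m_p$ --- and then to prove cyclicity of $m_p$ by induction using condition 4 on $A$, the $A_\infty$-morphism relations, and the cyclic identities already established for $m_2,\dots,m_{p-1}$. This is a special case of the general fact that every cyclic $A_\infty$-algebra admits a cyclic minimal model.
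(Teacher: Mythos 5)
Your outer scaffolding agrees with the paper --- strict unitality via Theorem \ref{strictly-unital}, reduction to inputs of degree $\geq k+1$, and the degree count that kills $p\geq l+2$ outright are all exactly the paper's steps, and your arithmetic for $p=l,\,l+1$ (nonzero $\mu_p$ forces $(p+1)k+3\leq N$) is correct. But the step you flag as the main obstacle is a genuine gap, and it sits precisely on the load-bearing case: the only dangerous values of $m_p$ land in $H^N$, i.e.\ the only nontrivial pairing is $\mu_p(x_1,\dots,x_p,1_H)$, and your mechanism for killing it --- cycling $1_H$ into an operation slot --- requires full cyclic \emph{invariance} of $\mu_p$. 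Condition 4 is strictly weaker: it asserts only that the cyclic \emph{sum} $b_p(\mathbf{1}+\sigma+\cdots+\sigma^{p-1})$ vanishes, a single linear relation among the $p$ rotations rather than equality of each rotation with the next (for a dga it reduces to graded commutativity of $m_2$, since $b_p=0$ for $p\geq3$, and holds with no chain-level pairing in sight). Condition 3 supplies a pairing only on $H^*(A)$, via $b_2$, with no compatibility imposed between the higher $m_p^A$ and any pairing on $A$. So your planned induction ``using condition 4 on $A$'' cannot output cyclic invariance of the minimal-model $\mu_p$, and the theorem you fall back on (cyclic $A_\infty$-algebras admit cyclic minimal models, Kajiura) does not apply, because the hypotheses do not make $(A,m^A)$ cyclic. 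Moreover, the intended conclusion is false for the unmodified Kadeishvili model: under conditions 1--4 the top-degree component of $b_l$ need not vanish at all, so no argument that leaves the model untouched can succeed. For the de Rham application your route is salvageable ($\Omega^*(M)$ with the integration pairing is cyclic over $\mathbb{R}$, modulo checking that cyclicity and strict unitality can be arranged simultaneously), but it does not prove Theorem \ref{longest} at its stated generality.

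The paper's proof shows what the weaker hypothesis actually buys, and it is a different mechanism: a gauge move rather than a vanishing claim. It replaces $f_{l-1}$ by $g_{l-1}=f_{l-1}-\delta_{l-1}$ with $m_1^A\delta_{l-1}=0$ and, using (\ref{sF-sG}), reduces $m'_l=0$ to solving, for each input string of total suspension degree $N-2$, the $l$ equations $b_l\sigma^a=\bigl[ b_2^A\bigl( (s\delta)_{l-1}\otimes (sg)_1 \bigr)(\sigma^a-\sigma^{a+1}) \bigr]$. Summing over $a$, the right-hand side telescopes to zero since $\sigma^l=\mathbf{1}$, so the unique compatibility condition of this circulant system is exactly $\sum_{a=0}^{l-1}b_l\sigma^a=0$ --- the cyclic-sum identity, not invariance --- and an explicit solution for $(s\delta)_{l-1}$ is written down using the dual bases of condition 3; the same move with $\delta_l$ kills $m''_{l+1}$, after which $p\geq l+2$ dies by the degree count you both use. (The paper does assert this cyclic-sum identity for the minimal-model $b_l$ without detailing its descent from condition 4, but that is a much weaker requirement than the cyclicity your argument needs.) If you want to repair your write-up within the stated hypotheses, replace ``promote cyclicity to the minimal model'' by ``show the top-degree component of $b_l$ has vanishing cyclic sum, and absorb it into a redefinition of $f_{l-1}$'' --- i.e., the linear-system argument above.
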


\begin{proof}
The idea is first constructing a minimal model $(H^*(A),m)$ of $A$ and a quasi-isomorphism $f:H^*(A)\to A$ following the proof of Theorem \ref{kadei}. Then do some modifications to make $m_p=0$ for $p\geq l$.

By Theorem \ref{strictly-unital}, $(H^*(A),m)$ and $f$ are strictly unital. As $\dim H^0(A)=1$ and $H^i(A)=0$ for $1\leq r\leq k$, if $b_l(sx_1,\ldots,sx_l)$ is non-trivial for $l\geq 3$, the degree of each $sx_j$ is at least $k$. Thus, the degree of $b_l(sx_1,\ldots,sx_l)$ is at least $lk+1$. On the other hand, we have assumed that a basis of $H^i(A)$ have a "dual" in $H^{N-i}(A)$. When $i\geq lk+2$, $N-i\leq (l+1)k+2-(lk+2)=k$. In this case $\dim H^i(A)$ is 0 for $i\neq N$ and is 1 for $i=N$. Hence, if $b_l(sx_1,\ldots,sx_l)$ is non-trivial, it must be in $SH^{N-1}(A)$, i.e. the total degree $|sx_1|+\ldots+|sx_l|=N-2$.

Now we define another $A_\infty$-structure $m'$ on $H^*(A)$ and a quasi-isomorphism $g:(H^*(A),m')\to(A,m^A)$ such that $m'_l=0$. Set $m'_p=m_p$ and $g_p=f_p$ for $p\leq l-2$. Then $m'_{l-1}$ must equal to $m_{l-1}$ and $f_{l-1}=g_{l-1}+\delta_{l-1}$ with $m_1^A \delta_{l-1}=0$. Define $sF_l$ and $sG_l$ as in (\ref{sF_p}). We have
\begin{equation}\label{sF-sG}
(sF)_l-(sG)_l= b^A_2 \big( (s\delta)_{l-1}\otimes (sg)_1+(sg)_1 \otimes (s\delta)_{l-1} \big)-\sum_{r=0}^{l-2}(s\delta)_{l-1}\left(\textbf{1}^{\otimes r}\otimes b_2\otimes \textbf{1}^{\otimes(l-r-2)}\right).
\end{equation}

As discussed above, $m'_l=0$ if and only if the cohomology class of $[(sF_l-sG_l)(sx_1,\ldots,sx_l)]=b_l(sx_1,\ldots,sx_l)$ with total degree $|sx_1|+\ldots+|sx_l|=N-2$. We can make $(s\delta)_{l-1}(sy_1,\ldots,sy_{l-1})=0$ when the total degree $|sy_1|+\ldots+|sy_{l-1}|=N-1$, so that the summation part of (\ref{sF-sG}) will vanish. Then the required equation becomes
\begin{align*}
& \quad b_l(sx_1,\ldots,sx_l) \\
&= \Big[ b_2^A\big( (s\delta)_{l-1}(sx_1,\ldots,sx_{l-1}),(sg)_1(sx_l) \big)+b_2^A\big( (sg)_1(sx_1),(s\delta)_{l-1}(sx_2,\ldots,sx_l) \big)\Big]\\
&= \Big[ b_2^A\big( (s\delta)_{l-1}(sx_1,\ldots,sx_{l-1}),(sg)_1(sx_l) \big) \\
& \quad - (-1)^{|sx_1|(N-2-|sx_1|)} b_2^A\big( (s\delta)_{l-1}(sx_2,\ldots,sx_l),(sg)_1(sx_1) \big)\Big] \\
&=\Big[ b_2^A \big( (s\delta)_{l-1}\otimes (sg)_1 \big)(\mathbf{1}-\sigma)(sx_1,\ldots,sx_l) \Big]
\end{align*}
The equation above is equivalent to
$$
b_l\sigma^a(sx_1,\ldots,sx_l)=\Big[ b_2^A \big( (s\delta)_{l-1}\otimes (sg)_1 \big)(\sigma^a-\sigma^{a-1})(sx_1,\ldots,sx_l) \Big]
$$
for any $a\in\mathbb{Z}$.

For each $SH^{i-1}(A)$, take a basis $\left\{ sx^{(i)}_1,\ldots,sx^{(i)}_{\beta_i}\right\}$, and its "dual" basis $\left\{ sy^{(d-i)}_1,\ldots,sy^{(d-i)}_{\beta_i}\right\}\in SH^{d-i-1}(A)$ such that $b_2(sy^{(N-i)}_v,sx^{(i)}_u)=\delta_{uv}s\mu$. Here $s\mu=sx^{(N)}_1$ is the generator of $SH^{N-1}(A)$. Then for arbitrary $sx^{(i_1)}_{u_1},\ldots,sx^{(i_{l-1})}_{u_{l-1}}$ with $sx^{(i_r)}_{u_r}\in SH^{i_r-1}(A)$, $|sx^{(i_r)}_{u_r}|\geq 0$ (so $\geq k$) for $1\leq r\leq l-1$, and $|sx^{(i_1)}_{u_1}|+\ldots+|sx^{(i_{l-1})}_{u_{l-1}}|<N-1$. Let $t=N-\left(|sx^{(i_1)}_{u_1}|+\ldots+|sx^{(i_{l-1})}_{u_{l-1}}|\right)$, then for each generator $sx^{(t)}_v\in SH^{(t-1)}(A)$, the following equation needs to be satisfied.
$$
b_l\sigma^a(sx^{(i_1)}_{u_1},\ldots,sx^{(i_{l-1})}_{u_{l-1}},sx^{(t)}_v) =\Big[ b_2^A \big( (s\delta)_{l-1}\otimes (sg)_1 \big)(\sigma^a-\sigma^{a+1})(sx^{(i_1)}_{u_1},\ldots,sx^{i_{(l-1)}}_{u_{l-1}},sx^{(t)}_v) \Big].
$$
The equations above form a linear equation system for $\Big[ b_2^A \big( (s\delta)_{l-1}\otimes (s\delta)_1 \big)\sigma^a(sx^{(i_1)}_{u_1},\ldots,sx^{(i_{l-1})}_{u_{l-1}},sx^{(t)}_v) \Big]$. Since $\sigma^l=\textbf{1}$, this system has $l$ variables and $l$ equations. Add all these $l$ equations together. The right-hand side is
$$
\Big[ b_2^A \big( (s\delta)_{l-1}\otimes (sg)_1 \big)(\textbf{1}-\sigma^l)(sx^{(i_1)}_{u_1},\ldots,sx^{(i_{l-1})}_{u_{l-1}},sx^{(t)}_v) \Big]=0.
$$
Also by the assumption of Condition 4, the left-hand side is
$$
\sum_{a=0}^{l-1}b_l\sigma^a(sx^{(i_1)}_{u_1},\ldots,sx^{(i_{l-1})}_{u_{l-1}},sx^{(t)}_v)=0.
$$
Therefore, this system has solutions. We can take one of the solutions as
$$
\Big[ b_2^A \big( (s\delta)_{l-1}\otimes (sg)_1 \big)\sigma^a(sx^{(i_1)}_{u_1},\ldots,sx^{(i_{l-1})}_{u_{l-1}},sx^{(t)}_v) \Big]=\sum_{j=0}^{l-1}\frac{l-j}{l} b_l\sigma^{a+j}(sx^{(i_1)}_{u_1},\ldots,sx^{(i_{l-1})}_{u_{l-1}},sx^{(t)}_v)
$$
Since $sH^{N-1}(A)$ is generated by $s\mu$, we can define an operator $*$ from $sH^{N-1}(A)$ to the ground field such that $*(c\cdot s\mu)=c$. So we can set
$$
(s\delta)_{l-1}(sx^{(i_1)}_{u_1},\ldots,sx^{(i_{l-1})}_{u_{l-1}})=\sum_{v=1}^{\beta_{t}}\sum_{j=0}^{l-1}\frac{l-j}{l} \big( *b_l\sigma^j (sx^{(i_1)}_{u_1},\ldots,sx^{(i_{l-1})}_{u_{l-1}},sx^{(t)}_v) \big)(sg)_1(sy^{(N-t)}_v),
$$
when $|sx^{(i_r)}_{u_r}|\geq 0$ for $1\leq r\leq l-1$ and $|sx^{(i_1)}_{u_1}|+\ldots+|sx^{(i_{l-1})}_{u_{l-1}}|<N-1$.
And Set
$$
(s\delta)_{l-1}(sx^{(i_1)}_{u_1},\ldots,sx^{(i_{l-1})}_{u_{l-1}})=0
$$
when some $|sx^{(i_r)}_{u_r}|=-1$ or $|sx^{(i_1)}_{u_1}|+\ldots+|sx^{(i_{l-1})}_{u_{l-1}}|\geq N-1$.

By the discussion above, this construction makes $m'_l=0$.

Then we can define $(sg)_l$ and $b'_{l+1}$ as the proof of Theorem \ref{kadei}. Since $s\delta$ satisfies the condition of strictly unital, so do $(sG)_l$, $(sg)_l$ and $b'_{l+1}$. Thus, when $b'_{l+1}(sx_1,\ldots,sx_{l+1})$ is non-trival, the degree of each $sx_j$ is at least $k$, and the degree of $b'_{l+1}(sx_1,\ldots,sx_{l+1})$ is at least $(l+1)k+1$. It is possible only when $N=(l+1)k+2$ and the total degree $|sx_1|+\ldots+|sx_{l+1}|=N-2$.

Similarly, we define an $A_\infty$-structure $m''$ on $H^*(A)$ and a quasi-isomorphism $h:(H^*(A),m'')\to(A,m^A)$ such that $m'l_{l+1}=0$. Set $m''_p=m'_p$ and $h_p=g_p$ for $p\leq l-1$. Then $m''_l=m'_l=0$ and $g_l=h_l+\delta_l$ with $m_1^A\delta_l=0$. Hence,
\begin{equation}\label{sG-sH}
(sG)_{l+1}-(sH)_{l+1}= b^A_2 \big( (s\delta)_l\otimes (sh)_1+(sh)_1 \otimes (s\delta)_l \big)-\sum_{r=0}^{l-1}(s\delta)_l(\textbf{1}^{\otimes r}\otimes b'_2\otimes \textbf{1}^{\otimes(l-r-1)}).
\end{equation}

In a similar way, we can define $(s\delta)_l$ by setting
$$
(s\delta)_l(sx^{(i_1)}_{u_1},\ldots,sx^{(i_l)}_{u_l})=0
$$
when some $|sx^{(i_r)}_{u_r}|=-1$ or $|sx^{(i_1)}_{u_1}|+\ldots+|sx^{(i_l)}_{u_l}|\geq N-1$. This construction makes the summation part of (\ref{sG-sH}) become 0. And we set
$$
(s\delta)_l(sx^{(i_1)}_{u_1},\ldots,sx^{(i_l)}_{u_l})=\sum_{v=1}^{\beta_t}\sum_{j=0}^l\frac{l+1-j}{l+1} \big( *b'_{l+1}\sigma^j (sx^{(i_1)}_{u_1},\ldots,sx^{(i_l)}_{u_l},sx^{(t)}_v) \big)(sh)_1(sy^{(N-t)}_v),
$$
when $|sx^{(i_r)}_{u_r}|\geq 0$ for $1\leq r\leq l$ and $|sx^{(i_1)}_{u_1}|+\ldots+|sx^{(i_l)}_{u_l}|<N-1$. Here $t=N-\left(|sx^{(i_1)}_{u_1}|+\ldots+|sx^{(i_l)}_{u_l}|\right)$. This construction makes
$$
b'_{l+1}\sigma^a(sx^{(i_1)}_{u_1},\ldots,sx^{(i_l)}_{u_l},sx^{(t)}_v) =\Big[ b_2^A \big( (s\delta)_l\otimes (sh)_1 \big)(\sigma^a-\sigma^{a+1})(sx^{(i_1)}_{u_1},\ldots,sx^{(i_l)}_{u_l},sx^{(t)}_v) \Big]
$$
for arbitrary $sx^{(i_1)}_{u_1},\ldots,sx^{(i_l)}_{u_l},sx^{(t)}_v\in SH^*(A)$ and arbitrary $a\in\mathbb{Z}$.

Therefore, for arbitrary $sx_1,\ldots,sx_{l+1}\in sH^*(A)$ with $|sx_1|+\ldots+|sx_{l+1}|$, we have
\begin{align*}
& \quad b'_{l+1}(sx_1,\ldots,sx_{l+1}) \\
&= \Big[ b_2^A\big( (s\delta)_l(sx_1,\ldots,sx_l),(sh)_1(sx_{l+1}) \big)+b_2^A\big( (sh)_1(sx_1),(s\delta)_l(sx_2,\ldots,sx_{l+1}) \big)\Big]
\end{align*}
This implies $m''_{l+1}=0$.

Same as the proof of Theorem \ref{kadei}, we can continue define $(sh)_{l+1}$ then $b''_p$ and $(sh)_p$ for $p\geq l+2$ inductively. By the discussion of Theorem \ref{strictly-unital}, $b''$ and $sh$ are strictly unital. Thus, for $p\geq l+2$, $b''_p(sx_1,\ldots,sx_p)=0$ if some $|sx_j|=-1$. If every $|sx_j|\geq 0$, the degree is at least $k$. So $b''_p(sx_1,\ldots,sx_p)\geq pk+1>(l+1)k+1\geq N-1$ and it must be 0. Hence, $b''_p=0$.

By the discussion above, $(H^*(A),m'')$ is an $A_\infty$-minimal model of $(A,m^A)$ with $m''_p=0$ for $p\geq l$.
\end{proof}

As a corollary, when $M$ is a $k$-connected compact orientable manifold, $\Omega^*(M)$ satisfied the theorem above. More generally, we can show that the statement is also true when $M$ is not orientable.

\begin{thm}\label{connected}
Suppose $M$ is an $N$-dimensional $k$-connected compact manifold. If $l\geq 3$ such that $N\leq (l+1)k+2$, then $\Omega^*(M)$ has an $A_\infty$-minimal model with $m_p=0$ for $p\geq l$. 
\end{thm}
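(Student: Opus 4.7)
The plan is to apply Kadeishvili's transfer theorem to the de Rham complex $\Omega^*(M)$, then use $k$-connectedness and Poincar\'e duality to pin down where the higher operations can land, and finally eliminate the remaining $m_p$ for $p \geq l$ by $A_\infty$-gauge transformations.

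For the degree analysis, $k$-connectedness gives $H^j(M)=0$ for $1 \leq j \leq k$, and Poincar\'e duality upgrades this to $H^j(M)=0$ for $N-k\leq j\leq N-1$. Combined with $N\leq (l+1)k+2$, the only nonzero cohomology in degrees $\geq lk+2$ is $H^N(M)\cong \mathbb{R}$. I would apply Kadeishvili to produce a strictly unital minimal model $(H^*(M), m)$, so that degree-zero inputs are absorbed by the unit and every nonzero input of $m_p$ lies in $H^{\geq k+1}(M)$. Since $m_p$ has degree $2-p$, its image sits in degree at least $p(k+1)-(p-2) = pk+2$; for $p\geq l$ this forces $\mathrm{Im}(m_p)\subseteq H^N(M)$. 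Moreover, the identity $\sum |a_i|+2-p = N$ with $|a_i|\geq k+1$ bounds $p\leq (N-2)/k\leq l+1$, so only $p\in\{l, l+1\}$ actually need to be killed.

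The task then reduces to eliminating these residual operations via $A_\infty$-isomorphisms. I would argue inductively: once $m_q = 0$ has been arranged for $q > p$ with $q\geq l$, the $A_\infty$-relation at arity $p$ collapses to the Hochschild cocycle condition $\partial m_p = 0$ over the graded commutative algebra $(H^*(M), m_2)$, and an $A_\infty$-automorphism whose only nontrivial component is $f_p \colon H^*(M)^{\otimes p}\to H^*(M)$ of degree $1-p$ modifies $m_p$ by $\partial f_p$ while leaving already-vanishing arities intact. It therefore suffices to realize each such $m_p$ as a Hochschild coboundary. Since $m_p$ takes values in the one-dimensional $H^N(M)$, the Poincar\'e pairing lets one identify it with a multilinear form on $H^*(M)$, and strict unitality together with the cyclic symmetry coming from Poincar\'e duality should force such a form (for $p \geq 3$) to be a coboundary, producing $f_p$ explicitly.

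The main obstacle is this cocycle-to-coboundary step: exhibiting $f_p$ in a way that is simultaneously compatible with strict unitality, the Poincar\'e pairing, and the previously achieved vanishing of higher-arity operations. Handling this cleanly may require upgrading Kadeishvili's output to a cyclic or Poincar\'e-duality-enhanced minimal model from the outset, so that the required trivializations on $H^N(M)$ are built in structurally rather than assembled by hand afterward. Once this step is in place, iterating through the finitely many relevant $p$ gives an $A_\infty$-minimal model with $m_p = 0$ for all $p \geq l$, as required.
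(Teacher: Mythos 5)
Your proposal reproduces the paper's skeleton accurately — Kadeishvili transfer made strictly unital, the degree count showing each nonzero input lies in $H^{\geq k+1}$, hence $\im(m_p)\subseteq H^N(M)$ for $p\geq l$ and only $p\in\{l,l+1\}$ survive — but it stops exactly where the paper's proof of Theorem \ref{longest} begins in earnest. The cocycle-to-coboundary step that you flag as "the main obstacle" and defer ("should force such a form to be a coboundary," "may require upgrading Kadeishvili's output") is the entire technical content of the theorem, and nothing in your sketch supplies it. The paper does it by modifying the transfer map rather than post-composing with automorphisms: it perturbs $f_{l-1}$ by an $m_1^A$-closed correction $\delta_{l-1}$, shows that killing $m_l$ amounts to solving $b_l\sigma^a=\big[b_2^A\big((s\delta)_{l-1}\otimes(sg)_1\big)(\sigma^a-\sigma^{a-1})\big]$ for $a=0,\dots,l-1$, and observes that this $l\times l$ linear system is consistent precisely because the right-hand sides telescope to zero under cyclic summation while the left-hand side vanishes by the cyclic identity $b_l(\mathbf{1}+\sigma+\cdots+\sigma^{l-1})=0$ inherited from the Poincar\'e pairing; an explicit solution is then written down with coefficients $\tfrac{l-j}{l}$ using the dual basis $\{sy^{N-t}_v\}$ and the functional $*$ on $SH^N(A)$. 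Until you either carry out this linear algebra or invoke (and verify the hypotheses of) a cyclic transfer theorem together with an actual proof that a unital cyclic top-degree-valued Hochschild cocycle of arity $\geq 3$ cobounds, your argument is a plan, not a proof.

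Two further concrete defects. First, your induction runs in the wrong direction: an $A_\infty$-automorphism whose only nontrivial higher component is $f_l$ does \emph{not} leave $m_{l+1}$ intact — a term such as $m_2\circ(f_l\otimes\mathbf{1})$ has output degree at least $lk+1+(k+1)=(l+1)k+2$, which can equal $N$, so gauging away $m_l$ can reintroduce a nonzero $m_{l+1}$ in top degree. The paper therefore proceeds upward: kill $m_l$, recompute the resulting $b'_{l+1}$ (which again lands only in $SH^N$ in total degree $N-2$), kill it with a second correction $\delta_l$, and only then conclude $b''_p=0$ for $p\geq l+2$ by pure degree count. Second, your reliance on $H^N(M)\cong\mathbb{R}$ and the Poincar\'e pairing silently assumes $M$ orientable, whereas the theorem does not; the paper treats the non-orientable case separately via the orientation cover and twisted Poincar\'e duality, which give $H^i(M)=0$ for all $i\geq N-k$, so that every $b_p$ with $p\geq l$ vanishes for degree reasons alone and no gauge argument is needed there.
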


\begin{proof}
As $M$ is $k$-connected, $\pi_i(M)=0$ for all $1\leq i\leq k$. It follows that $H^i(M)=0$ by the Hurewicz Theorem.

The case that $M$ is orientable is a special case of Theorem \ref{longest}. When $M$ is not orientable, $H^N(M)=0$. Let $\tilde{M}$ be the orientation bundle over $M$. $\tilde{M}$ is also connected and $\pi_i(\tilde{M})=0$ for $1\leq i\leq k$. So when $1\leq i\leq k$, $H_i(\tilde{M})=0$. By twisted Poincar\'e duality, $H^{N-i}(M)\simeq H_i(\tilde{M})=0$. Therefore, $H^i(M)=0$ for all $i\geq N-k$.

Construct a minimal model $(H^*(M),m)$ of $\Omega^*(M)$. Then $m$ is strictly unital. Thus, on the suspension if $b_p(sx_1,\ldots,sx_p)$ is non-trivial, then $|sx_j|\geq 0$ for all $1\leq j\leq p$, i.e. $|sx_j|\geq k$. When $p\geq l$, the degree of $b_p(sx_1,\ldots,sx_p)$ is at least $pk+1$. Since $pk+1\geq lk+1\geq N-k-1$, and $SH^i(M)=0$ for $i\geq N-k-1$, $b_p(sx_1,\ldots,sx_p)$ must be 0. Therefore, $b_p=0$ for all $p\geq l$. 
\end{proof}

In the proof of Theorem \ref{longest}, $g_l$ and $h_{l+1}$ are constructed in a similar way. It would be interesting to construct $A_\infty$-minimal models for other types of  DGAs or $A_\infty$-algebras following this way. For example, we may extend Cavalcanti's result \cite{caval2} that a compact orientable $k$-connected manifold where the $(k+1)$-Betti number $b_{k+1}=1$ is formal if its dimension $N\leq 4k+4$. A conjecture is that the de Rham complex of such a manifold has an $A_\infty$-minimal model with $m_p=0$ for $p\geq j$ if its dimension $N\leq (l+1)k+4$.

\section{Minimal model of an extension of formal DGA and its application on odd-dimensional sphere bundles}

Let $A$ be a formal DGA, and $A[\theta]=A\otimes \wedge(\theta)$ be the extension of $A$ by an odd-degree generator $\theta$. The odd degree of $\theta$ implies $\theta\wedge\theta=0$. Hence, we can also write $A[\theta]=A\oplus \theta A$. For the differential, we set $d\theta=\omega$ for some even-degree $\omega\in A$, with $|\omega|=|\theta|+1$. In Section 4.1 below, we will describe how simple this $A_\infty$-minimal model of $A[\theta]$ can be.

A topological example is that $A=C^*(M)$, the singular cochain complex of a formal manifold $M$. Then $A[\theta]$ is the singular cochain complex of the mapping cone $\omega:C^*(M)\to C^*(M)$ by taking cup product with $\omega$. Geometrically, when $\omega$ is an integral differential form, we consider $A=\Omega^*(M)$. In this case $A[\theta]$ is quasi-isomorphic to $\Omega^*(X)$, where $X$ is a sphere bundle over $M$ whose Euler class is $\omega$.

When $M$ is a symplectic manifold and $A=\Omega^*(M)$, $A[\theta]$ is also quasi-isomorphic to an $A_3$-algebra over $M$ \cite{tt}, constructed by Tsai, Tseng and Yau \cite{tty}. Moreover, when $\omega\in A$ is taken as the symplectic form of $M$ and is integral, $X$ becomes a circle bundle over $M$ and is called the Boothby-Wang fibration. In Section 4.2, we will describe the formality of such $X$ with $M$ formal and satisfying the hard Lefschetz property.

\subsection{Minimal model of an extension of formal DGA}

First we will show that when $M$ is formal, $A[\theta]$ is quasi-isomorphic to $H[\theta]$, which is the extension of $H^*(A)$ by $\theta$. Then we can consider a much simpler DGA.

\begin{thm}\label{extension-quasi}
Suppose $A,B$ are two DGAs and $f:A\to B$ is a quasi-isomorphism. $\omega_A\in A,\omega_B\in B$ are $d$-closed even-degree elements such that $f^*([\omega_A])=[\omega_B]$. Extend $A,B$ to $A[\theta]=\{ \alpha+\theta_A\beta | \ \alpha,\beta\in A \}$ with $d\theta_A=\omega_A$ and $B[\theta]=\{ x+\theta_B y | \  x,y\in B \}$ with $d\theta_B=\omega_B$. Then there exists a quasi-isomorphism $g:A[\theta]\to B[\theta]$.
\end{thm}

\noindent \textit{Proof.} Without loss of generality, we can assume $f(\omega_A)=\omega_B$. Otherwise, by assumption $\omega_B=f(\omega_A)+dr$ for some $r\in A$. Then we can consider $\omega'_B=\omega_B-dr$ and $\theta'_B=\theta_B-r$ instead of $\omega_B$ and $\theta_B$.

Set
$$
g(\alpha+\theta_A\beta)=f(\alpha)+\theta_Bf(\beta).
$$
It is easy to check that $g$ is linear, preserves wedge products and $gd_A=d_Bg$. It remains to show that $g^*$ is bijective.

\noindent\textit{1) $g^*$ is injective.}

Suppose $\alpha+\theta_A\beta$ is closed in $A[\theta]$ and $g^*[\alpha+\theta_A\beta]=0$. There exists $x,y\in B$ such that 
$$d(x+\theta_By)=g(\alpha+\theta_A\beta).$$
Thus, $$dx+\omega_B\wedge y-\theta_Bdy=f(\alpha)+\theta_Bf(\beta).$$
So we have 
$$dx+\omega_B\wedge y=f(\alpha)\quad\text{and}\quad dy=-f(\beta).$$

On the other hand,
$$0=d(\alpha+\theta_A\beta)=d\alpha+\omega_A\wedge\beta-\theta_Ad\beta.$$
Hence, $\beta$ is closed and $\omega_A\wedge\beta=-d\alpha$. Since $f^*[\beta]=-[dy]=0$, $\beta$ must be exact in $A$.

Assume $\eta\in A$ such that $d\eta=\beta$. By
$$d(y+f(\eta))=-f(\beta)+f(d\eta)=0$$
and $f^*$ is surjective, there exists $\xi\in A$ and $z\in B$ such that 
$$d\xi=0 \quad\text{and}\quad f(\xi)=y+f(\eta)+dz.$$
Then
\begin{align*}
f(\alpha+\omega_A\wedge\eta) &= f(\alpha)+\omega_B\wedge f(\eta) \\
&= dx+\omega_B\wedge y+\omega_B\wedge f(\eta) \\
&= dx+\omega_B\wedge(f(\xi)-dz)\\
&= f(\omega_A\wedge\xi)+dx-dz.
\end{align*}
So $f(\alpha+\omega_A\wedge\eta-\omega_A\wedge\xi)=dx-dz$ is exact in $B$. Also
$$d(\alpha+\omega_A\wedge\eta-\omega_A\wedge\xi)=d\alpha+\omega_A\wedge\beta=0.$$
Hence, $\alpha+\omega_A\wedge\eta-\omega_A\wedge\xi$ is exact since $f^*$ is injective. Let $\gamma\in A$ such that
$$d\gamma=\alpha+\omega_A\wedge\eta-\omega_A\wedge\xi.$$

Therefore,
\begin{equation*}
\begin{split}
\alpha+\theta_A\beta &= d\gamma-\omega_A\wedge\eta+\omega_A\wedge\xi+\theta_A d\eta \\
&= d\gamma-d(\theta_A\eta)+d(\theta_A\xi),
\end{split}
\end{equation*}
which is exact in $A$. That shows $g^*$ is injective. \\[0.1in]

\noindent \textit{2) $g^*$ is surjective.}

Given arbitrary closed $x+\theta_B y\in B[\theta]$, we have
$$ dx+\omega_B\wedge y=0 \quad\text{and}\quad dy=0.$$
As $y$ is closed and $f^*$ is surjective, there exists $\beta\in A,z\in B$ such that
$$f(\beta)=y+dz \quad\text{and}\quad d\beta=0.$$
Then
$$f(\omega_A\wedge\beta)=\omega_B\wedge(y+dz)=-dx+d(\omega_B\wedge z).$$
Since $\omega_A\wedge\beta$ is closed and $f^*$ is injective, $\omega_A\wedge\beta$ must be exact. So there exists $\alpha\in A$ such that $d\alpha=\omega_A\wedge\beta$. Thus,
\begin{equation*}
\begin{split}
d(x-\omega_B\wedge z+f(\alpha)) &= -\omega_B\wedge y-\omega_B\wedge dz+f(d\alpha)\\
&= -\omega_B\wedge(y+dz)+f(\omega_A\wedge\beta)\\
&= -\omega_B\wedge f(\beta)+\omega_B\wedge f(\beta)\\
&= 0.
\end{split}
\end{equation*}
So there exists $\xi\in A$ and $w\in B$ such that
$$f(\xi)=x-\omega_B\wedge z+f(\alpha)+dw \quad\text{and}\quad d\xi=0.$$
Therefore,
\begin{equation*}
\begin{split}
f(\xi-\alpha)+\theta_B f(\beta) &= x-\omega_B\wedge z+dw+\theta_B(y+dz)\\
&= x+\theta_B\wedge y-d(\theta_B z)+dw
\end{split}
\end{equation*}
i.e. $$g^*[\xi-\alpha+\theta_A\beta]=[x+\theta_B y].$$
Thus, $g^*$ is surjective.\qed

When $A$ is formal, there exists a zigzag of quasi-isomorphisms between $A$ and $H^*(A)$. We can extend each quasi-isomorphism by the previous theorem, and obtain the following statement.

\begin{cor}\label{extension-cohomology}
Suppose $A$ is a formal DGA. $\omega\in A$ is a closed even-degree element. $A[\theta]=\{ \alpha+\theta_A\beta | \ \alpha,\beta\in A \}$, where $d\theta_A=\omega_A$. $A[\theta]$ is quasi-isomorphic to $H[\theta]=\{ x+\theta_H y | \ x,y\in H^*(A) \}$, where $d\theta_H=[\omega_A]$.
\end{cor}

Now we can consider the extension of a DGA $\mathcal{H}$ whose differential is 0, then construct its $A_\infty$-minimal model.

\begin{thm}\label{extension-formal}
Suppose $\mathcal{H}$ is a DGA and $d_\mathcal{H}=0$. $\omega_\mathcal{H}\in \mathcal{H}$ is an even-degree element. Let $\mathcal{H}[\theta]=\{ \alpha+\theta_\mathcal{H}\beta | \ \alpha,\beta\in \mathcal{H} \}$ with $d\theta_\mathcal{H}=\omega_\mathcal{H}$. Then $\mathcal{H}[\theta]$ has an $A_\infty$-minimal model with $m_p=0$ for all $p$ except for $p=2$ or $3$.
\end{thm}

\noindent \textit{Proof.} Since $d_\mathcal{H}=0$, for arbitrary $\alpha,\beta\in \mathcal{H}$, $\alpha+\theta_\mathcal{H}\beta$ is closed if and only if $\omega_\mathcal{H}\wedge\beta=0$. It is exact if and only if $\alpha\in I(\omega_\mathcal{H})$ and $\beta=0$, where $I(\omega_\mathcal{H})=\{ \omega\wedge\alpha | \ \alpha\in \mathcal{H} \}$ is the ideal generated by $\omega_\mathcal{H}$ in $\mathcal{H}$. Thus,
$$H^*\big(\mathcal{H}[\theta]\big)=\big( \mathcal{H}/I(\omega_\mathcal{H}) \big) \oplus \ker \omega_\mathcal{H},$$
where $\omega_\mathcal{H}$ is an operator on $\mathcal{H}$ by multiplying $\omega_\mathcal{H}$. \\[0.1in]

\noindent \textit{1) Defining $f_1$.}

Decompose $\mathcal{H}=I(\omega_\mathcal{H})\oplus \mathcal{H}^C$ for some subspace $\mathcal{H}^C$ of $\mathcal{H}$. For each cohomology class $[\alpha+\theta_\mathcal{H}\beta]$ in $H^*\big(\mathcal{H}[\theta]\big)$, by the discussion above there exists a unique $\alpha_0\in \mathcal{H}^C,\beta_0\in \ker \omega_\mathcal{H}$ such that $\alpha_0+\theta_\mathcal{H}\beta_0\in[\alpha+\theta_\mathcal{H}\beta].$ So we can set
\begin{equation*}
\begin{split}
f_1: H^*\big(\mathcal{H}[\theta]\big)\to \mathcal{H}[\theta], \quad [\alpha+\theta_\mathcal{H}\beta]\mapsto \alpha_0+\theta_\mathcal{H}\beta_0.
\end{split}
\end{equation*}
It is easy to verify $f_1$ is a quasi-isomorphism. \\[0.1in]

\noindent \textit{2) Defining $f_2$.}

Give another decomposition of $\mathcal{H}$ by $\mathcal{H}=\ker \omega_\mathcal{H} \oplus \mathcal{H}^{\perp}$ for some subspace $\mathcal{H}^{\perp}$ of $\mathcal{H}$. For each $\alpha\in I(\omega_\mathcal{H})$, there exists a unique $\beta\in \mathcal{H}^{\perp}$ such that $\alpha=\omega_\mathcal{H}\wedge\beta$. So we can define a map $Q:I(\omega_\mathcal{H})\to \theta_\mathcal{H} \mathcal{H}$ by $Q(\alpha)=\theta_\mathcal{H}\beta.$ Then set $f_2$ as
$$f_2(x,y)=Q \big(f_1m_2(x,y)-f_1(x)\wedge f_1(y)\big).$$
Such $f_2$ is well-defined. Suppose
$$
f_1(x)=\alpha+\theta_\mathcal{H}\beta, \quad f_1(y)=\xi+\theta_\mathcal{H}\eta,
$$
then
$$
m_2(x,y)=[f_1(x)\wedge f_1(y)]=[\alpha\wedge\xi+\theta_\mathcal{H}\beta\wedge\xi+(-1)^{|\alpha|}\theta_\mathcal{H}\alpha\wedge\eta].
$$
Hence,
$$
f_1m_2(x,y)=f_1([\alpha\wedge\xi])+\theta_\mathcal{H}\beta\wedge\xi+(-1)^{|\alpha|}\theta_\mathcal{H}\alpha\wedge\eta,
$$
and
$$
f_1m_2(x,y)-f_1(x)\wedge f_1(y)=f_1([\alpha\wedge\xi])-\alpha\wedge\xi\in I(\omega_\mathcal{H}).
$$
As $dQ$ is the identity map on $I(\omega_\mathcal{H})$, $f_2$ satisfies the equation 
$$m_1f_2=dQ\big( f_1m_2-m_2(f_1\otimes f_1) \big)=f_1m_2-m_2(f_1\otimes f_1).$$
\\[0.1in]

\noindent \textit{3) Defining $m_3$ and $f_3$.}

$m_3$ and $f_3$ need to satisfy
$$f_1m_3-m_1f_3=F_3=m_2(f_1\otimes f_2-f_2\otimes f_1)-f_2(m_2\otimes \mathbf{1}-\mathbf{1}\otimes m_2),$$
and $m_3$ is the cohomology class of $F_3$. By the definition of $f_2$, its image is in $I(\theta_\mathcal{H})$, which is the ideal generated by $\theta_\mathcal{H}$ in $A[\theta]$. Hence, for any $x,y,z\in H^*\big(\mathcal{H}[\theta]\big)$,
$$m_2(f_1\otimes f_2-f_2\otimes f_1)(x,y,z)-f_2(m_2\otimes\mathbf{1}-\mathbf{1}\otimes m_2)(x,y,z)=\theta_\mathcal{H} \alpha$$
for some $\alpha\in \mathcal{H}$. Thus,
$$m_3(x,y,z)=[\theta_\mathcal{H}\alpha],\quad\text{and}\quad f_1m_3(x,y,z)=f_1([\theta_\mathcal{H}\alpha])=\theta_\mathcal{H}\alpha.$$

Therefore, $m_1f_3(x,y,z)=0$, and we can set $f_3=0$.

\noindent \textit{4) Triviality of $m_4$ and $f_4$.}

As $f_3=0$, $m_4$ and $f_4$ need to satisfy
$$
f_1m_4-m_1f_4=-m_2(f_2\otimes f_2)+f_2(m_3\otimes \mathbf{1}+\mathbf{1}\otimes m_3).
$$
We claim $m_2(f_2\otimes f_2)=0$ since $\im f_2\in I(\theta_\mathcal{H})$ and $\theta_\mathcal{H}\wedge\theta_\mathcal{H}=0$. On the other hand, for any $x,y,z,w\in H^*\big(\mathcal{H}[\theta]\big)$, we can assume
$$
m_3(x,y,z)=\theta\alpha \quad\text{and}\quad f_1(w)=\beta+\theta\gamma
$$
for some $\alpha,\beta,\gamma\in \mathcal{H}$. Then
$$
f_1m_2 \big(m_3(x,y,z),w \big)= f_1 \big( [\theta_\mathcal{H}\alpha\wedge(\beta+\theta\gamma)] \big)
= f_1[\theta_\mathcal{H}\alpha\beta]
= \theta_\mathcal{H}\alpha\beta,
$$
and
$$
m_2\big( f_1m_3(x,y,z),f_1(w) \big)= f_1\big( [\theta_\mathcal{H}\alpha] \big) \wedge(\beta+\theta_\mathcal{H}\gamma) = \theta_\mathcal{H}\alpha\beta.
$$
Hence, $m_1f_2 \big(m_3(x,y,z),w \big)=0$. By previous discussion we have $f_2=Qm_1f_2$, so $f_2(m_3\otimes \mathbf{1})=0$. Similarly, $f_2(\mathbf{1}\otimes m_3)=0$.

Therefore, $m_4=0$ and we can set $f_4=0$.

\noindent \textit{5) Triviality of higher $m_p$ and $f_p$.}

For higher degrees, we will prove $m_p=0$ and $f_p=0$ by induction. Suppose $m_p=0$ on $H^*\big(\mathcal{H}[\theta]\big)$ for $4\leq p\leq n-1$ and $f_p=0$ for $3\leq p\leq n-1$, where $n\geq 5$. $m_n$ and $f_n$ need to satisfy
\begin{equation*}
\begin{split}
& \quad f_1m_n-m_1f_n \\
&= \sum_{\substack{i_1+\ldots+i_r=n \\ r\geq 2}} (-1)^{\delta_1}m_r(f_{i_1}\otimes\ldots\otimes f_{i_r})-\sum_{\substack{r+s+t=n \\ 2\leq s\leq n-1}}(-1)^{\delta_2}f_{r+t+1}(\mathbf{1}^{\otimes r}\otimes m_s \otimes \mathbf{1}^{\otimes t}) \\
&= \sum_{i=1}^{n-1}(-1)^{\delta_1}m_2(f_i\otimes f_{n-i})-\sum_{r=0}^1 (-1)^{\delta_2}f_2(\mathbf{1}^{\otimes r}\otimes m_{n-1}\otimes \mathbf{1}^{\otimes (1-r)})
\end{split}
\end{equation*}
where $\delta_1=\sum_{t=1}^r(n-t)(i_t-1)$ and $\delta_2=r+st$.

Since $n\geq 5$, either $i\geq 3$ or $n-i\geq 3$, so $m_2(f_i\otimes f_{n-i})=0$. Also, $n-1\geq 4$. So $m_{n-1}=0$. That implies $f_1m_n-m_1f_n=0$. Therefore, $m_n=0$ and we can take $f_n=0$. \qed

By the previous theorem, we have the following statement for formal DGA.

\begin{thm}\label{extension-dga}
Suppose $A$ is a formal DGA, and $\omega_A\in A$ is an even-degree element. Extend $A$ to $A[\theta]=\{ \alpha+\theta_A\beta | \ \alpha,\beta\in A \}$ with $d\theta_A=\omega_A$. Then $A[\theta]$ has an $A_\infty$-minimal model with $m_p=0$ for all $p$ except for $p=2$ or $3$.
\end{thm}

When $A=\Omega^*(M)$ for some formal manifold $M$ and $\omega\in A$ is an integral differential form, $A[\theta]$ is quasi-isomorphic to $\Omega^*(X)$. Here $X$ is a sphere bundle over $M$ whose Euler class is $\omega$. So we have 

\begin{thm}\label{extension}
Let $M$ be a formal manifold, $\omega\in \Omega^*(M)$ be an even-dimensional integral differential form, and $X$ be the sphere bundle over $M$ with Euler class $\omega$. Then $X$ is formal if $\omega$ is exact. When $\omega$ is non-exact, $\Omega^*(X)$ has an $A_\infty$-minimal model with $m_p=0$ for all $p$ except for $p=2$ or $3$.
\end{thm}

When $\omega$ is non-exact, $X$ may be or may not be formal. There are examples for both cases. We will talk about this in next subsection.

In the Introduction, we asked whether we can construct formal manifolds from a given formal manifold, and we have only described the special case of odd dimensional sphere bundles here. It is natural to consider the case of even dimensional sphere bundles next, or more generally, other types of fiber bundles. We can also think about other ways of obtaining new manifolds, such as symplectic reduction, blowing up and down.

Another question is whether we can extend Theorem \ref{extension-dga}, which would fit as the $k=3$ case of the following broader statement. Suppose $A$ is a DGA. $\omega\in A$ is an even-degree element. Extend $A$ to $A[\theta]=\{ \alpha+\theta\beta | \ \alpha,\beta\in A \}$ with $d\theta=\omega$. If $A$ has an $A_\infty$-minimal model with $m_p=0$ for $p\geq k$, can we prove that $A[\theta]$ has an $A_\infty$-minimal model with $m_p=0$ for $p\geq k+1$?

\subsection{Formality of circle bundle over formal manifolds with hard Lefschetz property}

In this subsection, we will focus on the special case of circle bundles. The base $(M^{2n},\omega)$ is assumed to be a compact formal symplectic manifold satisfying the hard Lefschetz property. For simplicity, we use $A$ to denote the DGA $H^*(M)$, and use $\omega$ to denote its cohomology class $[\omega]$ in $A$ unless otherwise stated. When $\omega$ is integral, let $X$ denote the circle bundle over $M$ with Euler class $\omega$.

By Corollary \ref{extension-cohomology}, $\Omega^*(M)[\theta]$ is quasi-isomorphic to $A[\theta]$, where $d\theta=\omega$.  Then the circle bundle $X$ is formal if and only if $A[\theta]$ is formal.

Since $M$ satisfies the hard Lefschetz property, $\omega^j:A^{n-j}\to A^{n+j}$ is an isomorphism. We can set the space of primitive classes 
$$
PH(M)^k = PA^k =\ker \omega^{n-k+1}.
$$
Then $A$ has the Lefschetz decomposition
$$
A^k=\bigoplus \omega^j PA^{k-2j}.
$$
So
$$
H^k\big(A[\theta]\big)\simeq
\begin{cases}
PA^k, & 0\leq k\leq n, \\
\theta\omega^{k-1-n}PA^{2n+1-k}, & n+1\leq k\leq 2n+1.
\end{cases}
$$

With this decomposition, we can introduce an operator $L^{-1}$ such that for 
$a\in PA^k$,
$$
L^{-1}a=0,\quad \text{and}\quad L^{-1}(\omega^ja)=\omega^{j-1}a \text{ for } 1\leq j\leq n-k.
$$

Let's first consider a simple case.

\begin{ex}\label{ex-projective}
Suppose $M=\mathbb{C}P^n$, and $\omega$ is taken as a representative of the generator of $A^2=H^2(M)$. Then $M$ is formal since it is K\"ahler. So the circle bundle $X$ is formal if and only if $A[\theta]=\{ \alpha+\theta\beta| \alpha,\beta\in A, d\theta=\omega \}$ is formal. The cohomology ring of $\mathbb{C}P^n$ is
$$
A^i=
\begin{cases}
\langle\,\omega^p\,\rangle, & \text{if } i=2p,0\leq i\leq 2n, \\
0, & \text{otherwise.}
\end{cases}
$$
Thus,
$$
A^i[\theta]=
\begin{cases}
\langle\,[\omega^p]\,\rangle, & \text{if } i=2p,0\leq p\leq n, \\
\langle\,\theta[\omega^p]\,\rangle, & \text{if } i=2p+1,0\leq p\leq n,\\
0, & i>2n+1.
\end{cases}
$$
Since $\omega^p=d(\theta\omega^{p-1})$, $H^i\big( A[\theta] \big)$ must be trivial except for $i=0$ or $2n+1$. The morphism $1\mapsto 1, \theta[\omega^n]\mapsto\theta\omega^n$ is a DGA quasi-isomorphism from $H^i\big( A[\theta] \big)$ to $A[\theta]$. So $X$ and $A[\theta]$ are formal.
\end{ex}

As we will see later, $X$ may not be formal even if $M$ is formal. By the following lemma, when $X$ is formal we can construct an $A_\infty$-quasi-isomorphism $f:H^i\big( A[\theta] \big)\to A[\theta]$ such that the image of $f_1$ is $PA^i\oplus\theta\omega^{n-i}PA^i$. Note that in this following theorem $A$ denote a general formal DGA rather than $H^*(M)$.

\begin{lem}\label{quasi-representative}
Suppose $A$ is an arbitrary formal DGA. Then there exists an $A_\infty$-quasi-isomorphism $\phi:SH^*(A)\to SA$ between suspensions, where $b_p=0$ on $SH^*(A)$ for $p\neq 2$. For any linear map $h:SH^*(A)\to SA$ of degree $-1$, we can find another $A_\infty$-quasi-isomorphism $\psi:SH^*(A)\to SA$ such that $\phi_1-\psi_1=b_1\circ h$.
\begin{proof}
Set
\begin{align*}
\psi_1 &= \phi_1-b_1h, \\
\psi_2 &= \phi_2-b_2(\phi_1\otimes h+h\otimes \psi_1)-hb_2, \\
\psi_p &= \phi_p-b_2(\phi_{p-1}\otimes h+h\otimes \psi_{p-1}), \text{ for } p\geq 3.
\end{align*}
By a straightforward calculation, we can verity that
$$
\sum_{r+s+t=p} \psi_p(\mathbf{1}^{\otimes r}\otimes b_s \otimes \mathbf{1}^{\otimes t})=\sum_{i_1+\ldots+i_r=p}b_r(\psi_{i_1}\otimes\ldots\otimes\psi_{i_r})
$$
for all $p\geq1$. So $\psi$ is the $A_\infty$-quasi-isomorphism we want.
\end{proof}
\end{lem}

Then we have a necessary condition to make $A[\theta]$ formal.

\begin{thm}\label{necessary condition for formal}
Suppose $A[\theta]$ is formal. Take arbitrary $x_1,\ldots,x_k\in PA^r$, $y_1,\ldots y_k\in PA^s$ satisfying $r+s\leq n+1$ and $\sum x_iy_i$ is in the ideal generated by $\omega$, i.e.
$$
\sum_{i=1}^{k}x_iy_i=\omega\alpha
$$
for some $\alpha\in A^{r+s-2}$. Then for any $z\in PA^{n+1-s}$, the following equation must hold
\begin{align}\label{necessary equation for formal}
\sum_{i=1}^{k}x_iL^{-1}(y_iz)=\sum_{i=1}^{k}L^{-1}(x_iy_i)z.
\end{align}
\begin{proof}
Let $f:H^*\big(A[\theta]\big)\to A$ be an $A_\infty$-quasi-isomorphism. By Lemma \ref{quasi-representative}, we can modify $f_1$ sending each cohomology class to any representative. So we can assume that
$$
f_1\big([x]\big)=x, \quad \text{and} \quad f_1\big([\theta\omega^{n-j}x]\big)=\theta\omega^{n-j}x
$$
for any $x\in PA^j$.

By $f_1m_2=m_1f_2+m_2(f_1\otimes f_1)$, we can obtain $m_1f_2\big([x_i],[y_i]\big)$ by calculating the other terms. When $r+s\leq n$, $f_1\big([x_iy_i]\big)$ is the primitive part of $x_iy_i$, i.e. projecting $x_iy_i$ to $PA$. This primitive part can be written as $(1-\omega L^{-1})(x_iy_i)$. Hence,
$$
m_1f_2\big([x_i],[y_i]\big)=(1-\omega L^{-1})(x_iy_i)-x_iy_i=-\omega L^{-1}(x_iy_i).
$$
When $r+s=n+1$, $x_iy_i$ is in the ideal generated by $\omega$ according to the hard Lefschetz property. So its cohomology class is 0, and $x_iy_i=\omega L^{-1}(x_iy_i)$. Then we also have
$$
m_1f_2\big([x_i],[y_i]\big)=0-x_iy_i=-\omega L^{-1}(x_iy_i).
$$
Thus, $f_2\big([x_i],[y_i]\big)$ is $-\theta L^{-1}(x_iy_i)$ plus some closed element in $A[\theta]$. Since its degree is not greater than n, that closed element must be in $A$. Similarly, $[y_iz]=0$ and $f_2\big([y_i],[z]\big)$ is in the coset $-\theta L^{-1}(y_iz)+A$.

By $m_2(f_1\otimes f_2-f_2\otimes f_1)-f_2(m_2\otimes 1-1\otimes m_2)+m_1f_3=0$, we let the left-hand side acting on $(x_i,y_i,z)$ for each $i$ and add them together. Then the $\theta A$ part of the first term is
\begin{align*}
\sum_{i=1}^k \Big( (-1)^r x_i (-\theta L^{-1}(y_iz))-(-\theta L^{-1}(x_iy_i))z \Big) = \theta  \sum_{i=1}^k \Big( L^{-1}(x_iy_i)z-x_i L^{-1}(y_iz) \Big).
\end{align*}
The second term $-f_2\big([\sum x_iy_i],[z]\big)+\sum f_2\big( [x_i],[y_iz] \big)$ vanishes as $[\sum x_iy_i]$ and all $[y_iz]$ are 0. The third term is an exact element in $A[\theta]$, which must be in $A$. Therefore, the coefficient of $\theta$ must be 0, i.e.
$$
\sum_{i=1}^k \Big( L^{-1}(x_iy_i)z-x_i L^{-1}(y_iz) \Big)=0.
$$
\end{proof}
\end{thm}

With this theorem, we can claim that $X$ is non-formal quickly in some special cases.

\begin{defn}
A cohomology class $a\in A=H^*(M)$ is called \textbf{reducible} if it is in $A^+\cdot A^+$, i.e. there exist $x_1,y_1,\ldots,x_k,y_k\in A$ such that
$$
a=\sum_{i=1}^k x_iy_i,
$$
and all $x_i,y_i$ have positive degree.
\end{defn}

\begin{cor}\label{necessary equation for formal kahler}
When $M$ is a compact K\"ahler manifold and $[\omega]$ is reducible, $X$ is non-formal.
\end{cor}

\begin{proof}
Since $\omega\in A$ is reducible, $x_1,y_1,\ldots,x_k,y_k\in A^1$ such that
$$
\omega=\sum_{i=1}^k x_iy_i.
$$
As $M$ is K\"ahler, $\omega\in H^{1,1}(M)$. So we can assume that all $x_i\in H^{1,0}(M)$ and all $y_i\in H^{0,1}(M)$. Since $\omega^n\neq 0$, there exists some $x_{i_1}y_{i_1}\ldots x_{i_n}y_{i_n}\neq 0$. Take $z=y_{i_1}\ldots y_{i_n}\neq 0$, then $z$ is a non-trivial class in $H^{0,n}(M)$. So $y_iz=0$ for any $i$, and we have
$$
\sum_{i=1}^{k}x_iL^{-1}(y_iz)=0.
$$
On the other hand,
$$
\sum_{i=1}^{k}L^{-1}(x_iy_i)z=(L^{-1}\omega)z=z\neq 0.
$$
By Theorem \ref{necessary condition for formal}, $A[\theta]$ is not formal. Neither is $X$.
\end{proof}

\begin{ex}\label{ex-torus}
Let $M=\Omega^*(T^{2n})$. Take $\omega=\sum_{i=1}^n x_iy_i$, where $x_i,y_i\in H^1(M)$ and $H^1(M)=\langle\, x_1,\ldots,x_n,y_1,\ldots,y_n \,\rangle$. Then $M$ is a K\"ahler manifold and $\omega$ is reducible. Hence, $X$ is non-formal.
\end{ex}

When the dimension of $M$ is low, equation $(\ref{necessary equation for formal})$ is also sufficient for the formality of $X$.

\begin{thm}\label{condition equivalent to formal}
When the dimension of $M$ is not greater than 6, $A[\theta]$ is formal if and only if (\ref{necessary equation for formal}) holds for all $x_i,y_i,z$.
\end{thm}

\begin{proof}
The 2-dimensional case follows from Example \ref{ex-projective} and Corollary \ref{necessary equation for formal kahler}. When the genus of $M$ is greater than or equal to 1, $\omega$ is reducible and $M$ is K\"ahler. So (\ref{necessary equation for formal}) does not hold and $A[\theta]$ is non-formal. When the genus is 0, $PA=\langle\, 1 \,\rangle$. So $(\ref{necessary equation for formal})$ trivially holds and $A[\theta]$ is formal.

When $M$ is 6-dimensional, we will construct an $A_\infty$-quasi-isomorphism $f:H^*\big(A[\theta]\big)\to A[\theta]$. The 4-dimensional case is similar. We just give the definition of $f$ here, and will give the proof that it is indeed an $A_\infty$-quasi-isomorphism in the Appendix.

Choose a basis such that $H^1\big(A[\theta]\big)\cdot H^1\big(A[\theta]\big)=\langle\, [a_i^{[2]}b_i^{[2]}] \,\rangle$, where $1\leq i\leq \dim \left\{ H^1\big(A[\theta]\big)\cdot H^1\big(A[\theta]\big) \right\}$, $a_i^{[2]},b_i^{[2]}\in PA^1$. Then expand this basis such that $H^2\big(A[\theta]\big)=\langle\, [a_i^{[2]}b_i^{[2]}] \,\rangle \oplus \langle\,[y_j^{(2)}]\,\rangle$ with $y_j^{(2)}\in PA^2$. Set $x_i^{(2)}=a_i^{[2]}b_i^{[2]}-\omega L^{-1}(a_i^{[2]}b_i^{[2]})$. Then we also have $PA^2=\langle\, x_i^{(2)} \,\rangle \oplus \langle\, y_j^{(2)} \,\rangle$.

By Poincar\'e duality, for each $x_k^{(2)}$ there exists some $z_k\in A^4$ such that $x_i^{(2)} z_k=\delta_{ik}\omega^3$ and $y_j^{(2)} z_k=0$. Let $(x_k^{(2)})^*$ be the projection of $z_k$ to $\omega PA^2$. Since $\omega^2\wedge PA^2=0$, $(x_k^{(2)})^*$ is also orthogonal to other $x_i^{(2)},y_j^{(2)}$ and satisfies $x_k^{(2)}(x_k^{(2)})^*=\omega^3$. Similarly we can define $(y_j^{(2)})^*\in \omega PA^2$. The choices of $(x_i^{(2)})^*,(y_j^{(2)})^*$ are unique because $\dim PA^2=\dim\,\omega PA^2$.

Similarly, we can choose a basis such that $H^2\big(A[\theta]\big)\cdot H^1\big(A[\theta]\big)=\langle\, [a_i^{[3]}b_i^{[3]}] \,\rangle$ with $a_i^{[3]}\in PA^2$ and $b_i^{[3]}\in PA^1$. Then expand this basis such that $H^3\big(A[\theta]\big)=\langle\, [a_i^{[3]}b_i^{[3]}] \,\rangle \oplus \langle\,[y_j^{(3)}]\,\rangle$ with $y_j^{(3)}\in PA^3$. Set $x_i^{(3)}=a_i^{[3]}b_i^{[3]}-\omega L^{-1}(a_i^{[3]}b_i^{[3]})$ and we have $PA^3=\langle\, x_i^{(3)} \,\rangle \oplus \langle\, y_j^{(3)} \,\rangle$. We can define $(x_i^{(3)})^*,(y_j^{(3)})^*$ in a same way such that they are orthogonal to all other $x_i^{(3)},y_j^{(3)}$ except for $x_i^{(3)}(x_i^{(3)})^*=y_j^{(3)}(y_j^{(3)})^*=\omega^3$.

Now we can start define $f:H^*\big(A[\theta]\big)\to A[\theta]$. For $z^{(r)}\in PA^r$, set
\begin{align*}
f_1\big([z^{(r)}]\big)=z^{(r)}, \quad f_1\big([\theta\omega^{n-r}z^{(r)}]\big)=\theta\omega^{n-r}z^{(r)}.
\end{align*}

Next we define $f_2$. For $z^{(r)}\in PA^r$, $w^{(s)}\in PA^s$ with $r+s\leq 4$ we set
$$
f_2\big([z^{(r)}],[w^{(s)}]\big)=-\theta L^{-1}(z^{(r)} w^{(s)}).
$$

When acting on $H^3\big(A[\theta]\big)\otimes H^2\big(A[\theta]\big)$, for $z^{(3)}\in PA^3$, we set
\begin{align}
f_2\big([z^{(3)}],[x_i^{(2)}]\big)=\theta\Big( z^{(3)} L^{-1}(a_i^{[2]}b_i^{[2]})-L^{-1}(z^{(3)} a_i^{[2]})b_i^{[2]} \Big).
\end{align}
For $z^{(2)}\in PA^2$, we set
\begin{align}
f_2\big([x_i^{(3)}],[z^{(2)}]\big)=\theta\Big( L^{-1}(b_i^{[3]}a_i^{[3]})z^{(2)}-b_i^{[3]}L^{-1}(a_i^{[3]}z^{(2)}) \Big).
\end{align}
For irreducible generators $y_j^{(3)}\in PA^3$ and $y_{k}^{(2)}\in PA^2$, set
$$
f_2\big([y_j^{(3)}],[y_k^{(2)}]\big)=-\theta L^{-1}(y_j^{(3)} y_k^{(2)})-\sum_i L^{-3}\Big( -y_j^{(3)} f_2\big([y_k^{(2)}],[x_i^{(3)}]\big) \Big)(x_i^{(3)})^*.
$$
Here $L^{-3}$ means $(L^{-1})^3$ and sends $\theta\omega^3$ to $\theta$.

Similarly when acting on $H^2\big(A[\theta]\big)\otimes H^3\big(A[\theta]\big)$ we define
$$
f_2\big([z^{(2)}],[x_i^{(3)}]\big)=\theta\Big( z^{(2)} L^{-1}(a_i^{[3]}b_i^{[3]})-L^{-1}(z^{(2)} a_i^{[3]})b_i^{[3]} \Big)
$$
for $z^{(2)}\in PA^2$,
$$
f_2\big([x_i^{(2)}],[z^{(3)}]\big)=\theta\Big( L^{-1}(a_i^{[2]}b_i^{[2]})z^{(3)}-a_i^{[2]}L^{-1}(b_i^{[2]}z^{(3)}) \Big)
$$
for $z^{(3)}\in PA^3$, and
$$
f_2\big([y_j^{(2)}],[y_k^{(3)}]\big)=-\theta L^{-1}(y_j^{(2)} y_k^{(3)})-\sum_i L^{-3}\Big( -f_2\big([x_i^{(3)}],[y_j^{(2)}]\big) y_k^{(3)} \Big)(x_i^{(3)})^*.
$$

When acting on $H^3\big(A[\theta]\big)\otimes H^3\big(A[\theta]\big)$, for $z^{(3)}\in PA^3$ we define
\begin{align*}
f_2\big([z^{(3)}],[x_i^{(3)}]\big)=\theta\Big( z^{(3)} L^{-1}(b_i^{[3]}a_i^{[3]})-L^{-1}(z^{(3)} b_i^{[3]})a_i^{[3]} \Big), \\
f_2\big([x_i^{(3)}],[z^{(3)}]\big)=\theta\Big( L^{-1}(a_i^{[3]}b_i^{[3]})z^{(3)}-a_i^{[3]}L^{-1}(b_i^{[3]}z^{(3)}) \Big),
\end{align*}
and
$$
f_2\big([y_j^{(3)}],[y_k^{(3)}]\big)=-\theta L^{-1}(y_j^{(3)} y_k^{(3)})-\sum_i L^{-3}\Big( y_j^{(3)} f_2\big([y_k^{(3)}],[x_i^{(2)}]\big) \Big)(x_i^{(2)})^*.
$$

Finally, we set $f_2=0$ when acting on $H^r\big(A[\theta]\big)\otimes H^s\big(A[\theta]\big)$ and one of $r,s\geq 4$. For $p\geq3$, set $f_p=0$. By straightforward calculation we will see that such $f$ is an $A_\infty$-quasi-isomorphism.
\end{proof}

\begin{rmk}
In the proof of Theorem \ref{condition equivalent to formal}, we split $H^*(A)$ as the subspace of reducible cohomology classes and its complement, then define $f_2$ acting on them separately. This works for $\dim M\leq 6$. But when $\dim M=8$, the reducible cohomology classes of degree 4 contains two subspaces $H^2\cdot H^2$ and $H^3\cdot H^1$, these two subspace may have a non-trivial intersection. This makes defining $f_2$ much more complicated. So we have to find other ways to generalize this theorem.
\end{rmk}

Observe that equation (\ref{necessary equation for formal}) is a special case of the vanishing generalized Massey product (c.f. \cite{bt2}). Also all generalized Massey products vanishing is a necessary condition for formal. So we have the following corollary.

\begin{cor}
When the dimension of $M$ is not greater than 6, $A[\theta]$ is formal if and only if its generalized Massey products all vanish.
\end{cor}

Therefore, we have following theorem.

\begin{thm}\label{condition equivalent to formal geometircally}
Let $(M^{2n},\omega)$ be a formal symplectic manifold satisfying the hard Lefschetz property. Suppose $\omega$ is integral, then there exists a circle bundle $X$ over $M$ whose Euler class is $\omega$. In the following statements, (1) implies (2), and (2) implies (3). Moreover, when $\dim M\leq 6$, (3) also implies (1).
\begin{enumerate}[(1)]
\item X is formal.
\item All generalized Massey products of $\Omega^*(X)$ vanish.
\item For arbitrary $x_1,\ldots,x_k\in PH^r(M)$, $y_1,\ldots y_k\in PH^s(M)$ satisfying $r+s\leq n+1$ and
$$
\sum_{i=1}^{k}x_iy_i=\omega\alpha
$$
for some $\alpha\in H^{r+s-2}(M)$, and for arbitrary $z\in PH^{n+1-s}(M)$, we have
$$
\sum_{i=1}^{k}x_iL^{-1}(y_iz)=\sum_{i=1}^{k}L^{-1}(x_iy_i)z.
$$
\end{enumerate} 
\end{thm}

\begin{ex}
Let $(\Sigma_1,\omega_1),(\Sigma_2,\omega_2)$ be two Riemann surfaces, where $\omega_1,\omega_2$ are their volume form respectively. Let $M=\Sigma_1\times\Sigma_2$ and $\omega=\omega_1+\omega_2$ (Here $\omega_1,\omega_2$ are actually the pullback of $\omega_1,\omega_2$ associated with the projection. For simplicity we omit the pullback sign). Then $(M,\omega)$ is a K\"ahler manifold. So it is formal and satisfies the hard Lefschetz property.

If the genus of both $\Sigma_1$ and $\Sigma_2$ are at least 1, $\omega_1$ and $\omega_2$ are reducible. Then $\omega$ is also reducible. By Corollary \ref{necessary equation for formal kahler}, the circle bundle $X$ over $M$ with Euler class $\omega$ is non-formal.

If one of the genus is 0, we will show that $X$ is formal. Without loss of generality, suppose the genus of $\Sigma_2$ is 0. Choose a basis of $H^1(\Sigma_1)$ such that
\begin{align*}
& H^1(\Sigma_1)=\langle\, a_1,b_1,\ldots,a_k,b_k \,\rangle,\\
& a_ib_j=\delta_{ij}\omega_1, \quad a_ia_j=b_ib_j=0
\end{align*}
for all $1\leq i,j\leq k$, where $k$ is the genus of $\Sigma_1$. Let $\bar{\omega}=\omega_1-\omega_2$. Then the ring structure of $A=H^*(M)$ can be described as follows.
\begin{center}
\begin{tabular}{l l}
$A^0=\langle\, 1 \,\rangle,$ & $A^1=\langle\, a_1,b_1,\ldots,a_k,b_k \,\rangle, \quad A^2=\langle\, \omega,\bar{\omega} \,\rangle,$ \\
$A^3=\langle\, \omega a_1,\omega b_1,\ldots,\omega a_k,\omega b_k \,\rangle, $ & $A^4=\langle\, \omega^2 \,\rangle.$ \\
$\omega a_i=\omega_2 a_i=-\bar{\omega} a_i,$ & $\omega b_i=\omega_2 b_i=-\bar{\omega} b_i,$ \\
$\bar{\omega}^2=-2\omega_1\omega_2=-\omega^2,$ & $\omega\bar{\omega}=0.$
\end{tabular}
\end{center}
The last equation implies that $\bar{\omega}$ is the generator of $PA^2$.

To prove that $X$ is formal, we will verify that (\ref{necessary equation for formal}) holds for all $x_1,\ldots,x_l\in PA^r$, $y_1,\ldots y_l\in PA^s,z\in PA^t$ satisfying the following conditions: $r+s\leq 3,s+t=3$, and $\sum x_iy_i$ is in the ideal generated by $\omega$.

When $s=1$, we have $t=2$. So we can assume $z=\bar{\omega}$. In this case $r$ can be 1 or 2. We first discuss the case $r=1$. Since the product of any two elements in $A^1$ is proportional to $\omega_1=\frac{1}{2}(\omega+\bar{\omega})$, $\sum x_iy_i$ must be 0 if it is in the ideal generated by $\omega$. On the other hand, $y_iz=y_i\bar{\omega}=-y_i\omega$. Thus
$$
\sum_{i=1}^l x_i L^{-1}(y_i z) = \sum_{i=1}^l x_i (-y_i) = 0 = \sum_{i=1}^l L^{-1}(x_i y_i) z.
$$

For the case $r=2$, since the product of two elements in $PA^1$ and $PA^2$ is always in the ideal generated to $\omega$, we only need to verify that (\ref{necessary equation for formal}) holds for $l=1$, i.e. 
$$
x_1 L^{-1}(y_1 z) = L^{-1}(x_1 y_1) z.
$$
As $x_1,z\in PA^2$, we can assume that they are both $\bar{\omega}$. Then 
$$
x_1 L^{-1}(y_1 z) = \bar{\omega} L^{-1}(y_1 \bar{\omega}) = L^{-1}(\bar{\omega} y_1) \bar{\omega} = L^{-1}(x_1 y_1) z.
$$

When $s=2$, $r$ and $t$ can only be 1. Same as the case above, we only need to verify that (\ref{necessary equation for formal}) holds for $l=1$. Suppose $y_1=\bar{\omega}$. Then
$$
x_1 L^{-1}(y_1 z) = x_1 L^{-1}(\bar{\omega} z) = x_1 (-z) = (-x_1)z = L^{-1}(x_1 \bar{\omega}) z = L^{-1}(x_1 y_1) z.
$$
Therefore, $X$ is formal.
\end{ex}

\begin{ex}
Let $M=S^2\times S^2\times S^2$, and $\omega_1,\omega_2,\omega_3$ be the corresponding volume form of each $S^2$. Set $\omega=\omega_1+\omega_2+\omega_3$. Then $(M,\omega)$ is a K\"ahler manifold.

Let $a=\omega_1-\omega_3$ and $b=\omega_2-\omega_3$. As $\omega_1^2=\omega_2^2=\omega_3^2=0$, $\omega^2 a=2\omega_2\omega_3\omega_1-2\omega_1\omega_2\omega_3=0$. Thus, $a\in PA^2$. Similarly $b\in PA^2$. We will show that $aL^{-1}(ab) \neq L^{-1}(a^2)b$.

For elements in $A^4$, we have
\begin{center}
\begin{tabular}{l l}
$\omega^2=2\omega_1\omega_2+2\omega_1\omega_3+2\omega_2\omega_3,$
& $\omega a=\omega_1\omega_2-\omega_2\omega_3,$ \\
 $\omega b=\omega_1\omega_2-\omega_1\omega_3,$ 
& $a^2=-2\omega_1\omega_3,$ \\
 $ab=\omega_1\omega_2-\omega_1\omega_3-\omega_2\omega_3,$
& $b^2=-2\omega_2\omega_3.$ 
\end{tabular}
\end{center}

So
\begin{align*}
& ab=\omega_1\omega_2-\omega_1\omega_3-\omega_2\omega_3=-\frac{1}{6}\omega^2+\frac{2}{3}\omega a+\frac{2}{3}\omega b, \\
& aL^{-1}(ab)=a(-\frac{1}{6}\omega+\frac{2}{3}a+\frac{2}{3}b)=\frac{1}{2}\omega_1\omega_2-2\omega_1\omega_3-\frac{1}{2}\omega_2\omega_3.
\end{align*}
On the other hand,
\begin{align*}
& a^2=-2\omega_1\omega_3=-\frac{1}{3}\omega^2-\frac{2}{3}\omega a+\frac{4}{3}\omega b, \\
& L^{-1}(a^2)b=(-\frac{1}{3}\omega-\frac{2}{3}a+\frac{4}{3}b)b=-\omega_1\omega_2+\omega_1\omega_3-2\omega_2\omega_3.
\end{align*}
Therefore, The circle bundle $X$ over $M$ with Euler class $\omega$ is non-formal.

This example shows that $\omega$ being irreducible, even if $M$ is simply connected, is not enough to guarantee that $X$ is formal.
\end{ex}

\appendix
\section{Proof of Theorem \ref{condition equivalent to formal}}
Here, we will show that $f:H^*\big( A[\theta] \big)\to A[\theta]$ defined in the proof of Theorem \ref{condition equivalent to formal} is an $A_\infty$-quasi-isomorphism. We assume $\dim M=6$.

\begin{proof}
Recall that we choose bases of $H^2\big(A[\theta]\big)$ and $H^3\big(A[\theta]\big)$ satisfying
$$
H^2\big(A[\theta]\big)=\langle\, [x_i^{(2)}] \,\rangle \oplus \langle\,[y_j^{(2)}]\,\rangle, \quad H^3\big(A[\theta]\big)=\langle\, [x_i^{(3)}] \,\rangle \oplus \langle\,[y_j^{(3)}]\,\rangle.
$$
Here $x_i^{(r)},y_i^{(r)}\in PA^r$ for $r=2,3$. All $[x_i^{(r)}]=[a_i^{[r]}b_i^{[r]}]$ are generators of the subspace of reducible cohomologies, and $[y_i^{(r)}]$ is irreducible. The degree of $[a_i^{[3]}]$ is 2, and the degree of $[b_i^{[3]}],[a_i^{[2]}],[b_i^{[2]}]$ are 1. We use $(x_i^{(r)})^*,(y_j^{(r)})^*\in\omega^{3-r}PA^r$ denote the dual of $x_i^{(r)},y_i^{(r)}$ respectively corresponding to these bases.

$f_1$ is defined as follows. For $z^{(r)}\in PA^r$, set
\begin{align*}
f_1\big([z^{(r)}]\big)=z^{(r)}, \quad f_1\big([\theta\omega^{n-r}z^{(r)}]\big)=\theta\omega^{n-r}z^{(r)}.
\end{align*}

$f_2$ is defined as follows. For $z^{(r)}\in PA^r$, $w^{(s)}\in PA^s$ with $r+s\leq 4$ we set
$$
f_2\big([z^{(r)}],[w^{(s)}]\big)=-\theta L^{-1}(z^{(r)} w^{(s)}).
$$

When acting on $H^3\big(A[\theta]\big)\otimes H^2\big(A[\theta]\big)$, we define
\begin{align}\label{f2(H3,H1.H1)}
f_2\big([z^{(3)}],[x_i^{(2)}]\big)=\theta\Big( z^{(3)} L^{-1}(a_i^{[2]}b_i^{[2]})-L^{-1}(z^{(3)} a_i^{[2]})b_i^{[2]} \Big)
\end{align}
for $z^{(3)}\in PA^3$,
\begin{align}\label{f2(H2.H1,H2)}
f_2\big([x_i^{(3)}],[z^{(2)}]\big)=\theta\Big( L^{-1}(b_i^{[3]}a_i^{[3]})z^{(2)}-b_i^{[3]}L^{-1}(a_i^{[3]}z^{(2)}) \Big)
\end{align}
For $z^{(2)}\in PA^2$, and
$$
f_2\big([y_j^{(3)}],[y_k^{(2)}]\big)=-\theta L^{-1}(y_j^{(3)} y_k^{(2)})-\sum_i L^{-3}\Big( -y_j^{(3)} f_2\big([y_k^{(2)}],[x_i^{(3)}]\big) \Big)(x_i^{(3)})^*.
$$
Here $L^{-3}$ sends $\theta\omega^3$ to $\theta$.

When acting on $H^2\big(A[\theta]\big)\otimes H^3\big(A[\theta]\big)$ we define
$$
f_2\big([z^{(2)}],[x_i^{(3)}]\big)=\theta\Big( z^{(2)} L^{-1}(a_i^{[3]}b_i^{[3]})-L^{-1}(z^{(2)} a_i^{[3]})b_i^{[3]} \Big)
$$
for $z^{(2)}\in PA^2$,
$$
f_2\big([x_i^{(2)}],[z^{(3)}]\big)=\theta\Big( L^{-1}(a_i^{[2]}b_i^{[2]})z^{(3)}-a_i^{[2]}L^{-1}(b_i^{[2]}z^{(3)}) \Big)
$$
for $z^{(3)}\in PA^3$, and
$$
f_2\big([y_j^{(2)}],[y_k^{(3)}]\big)=-\theta L^{-1}(y_j^{(2)} y_k^{(3)})-\sum_i L^{-3}\Big( -f_2\big([x_i^{(3)}],[y_j^{(2)}]\big) y_k^{(3)} \Big)(x_i^{(3)})^*.
$$

When acting on $H^3\big(A[\theta]\big)\otimes H^3\big(A[\theta]\big)$, for $z^{(3)}\in PA^3$ we define
\begin{align*}
f_2\big([z^{(3)}],[x_i^{(3)}]\big)=\theta\Big( z^{(3)} L^{-1}(b_i^{[3]}a_i^{[3]})-L^{-1}(z^{(3)} b_i^{[3]})a_i^{[3]} \Big), \\
f_2\big([x_i^{(3)}],[z^{(3)}]\big)=\theta\Big( L^{-1}(a_i^{[3]}b_i^{[3]})z^{(3)}-a_i^{[3]}L^{-1}(b_i^{[3]}z^{(3)}) \Big),
\end{align*}
and
$$
f_2\big([y_j^{(3)}],[y_k^{(3)}]\big)=-\theta L^{-1}(y_j^{(3)} y_k^{(3)})-\sum_i L^{-3}\Big( y_j^{(3)} f_2\big([y_k^{(3)}],[x_i^{(2)}]\big) \Big)(x_i^{(2)})^*.
$$

When acting on $H^r\big(A[\theta]\big)\otimes H^s\big(A[\theta]\big)$ and one of $r,s\geq 4$, we set $f_2=0$. For $p\geq3$, set $f_p=0$.\\[0.1in]

\textbf{f is well-defined.}

First we need to verify that $f$ is well-defined. Namely, when there are two different ways of defining $f_2([z],[w])$, they should be compatible.

For example, $f_2([x_i^{(3)}],[x_k^{(2)}])$ are defined by both (\ref{f2(H3,H1.H1)}) and (\ref{f2(H2.H1,H2)}). Under these definitions, we have
\begin{align*}
f_2([x_i^{(3)}],[x_k^{(2)}]) &= \theta\Big( x_i^{(3)} L^{-1}(a_k^{[2]}b_k^{[2]})-L^{-1}(x_i^{(3)} a_k^{[2]})b_k^{[2]} \Big) \\
&= \theta\Big( a_i^{[3]}b_i^{[3]} L^{-1}(a_k^{[2]}b_k^{[2]})-\omega L^{-1}(a_i^{[3]}b_i^{[3])}) L^{-1}(a_k^{[2]}b_k^{[2]})\\
&\quad\quad -L^{-1}(a_i^{[3]}b_i^{[3]} a_k^{[2]})b_k^{[2]} + L^{-1}\big( \omega L^{-1}(a_i^{[3]}b_i^{[3]}) a_k^{[2]} \big)b_k^{[2]}\Big),
\end{align*}
and
\begin{align*}
f_2\big([x_{(i)}^3],[x_k^{(2)}]\big) &= \theta\Big( L^{-1}(b_i^{[3]}a_i^{[3]})x_k^{(2)}-b_i^{[3]}L^{-1}(a_i^{[3]}x_k^{(2)}) \Big) \\
&= \theta\Big( L^{-1}(b_i^{[3]}a_i^{[3]})a_k^{[2]}b_k^{[2]}-L^{-1}(b_i^{[3]}a_i^{[3]})\omega L^{-1}(a_k^{[2]}b_k^{[2]}) \\
&\quad\quad -b_i^{[3]}L^{-1}(a_i^{[3]}a_k^{[2]}b_k^{[2]})+b_i^{[3]}L^{-1}\big(a_i^{[3]} \omega L^{-1}(a_k^{[2]}b_k^{[2]}) \big) \Big).
\end{align*}
Compare corresponding terms. The degree of $a_i^{[3]}a_k^{[2]}$ is 3, and the degree of $b_i^{[3]}$ and $b_k^{[2]}$ are 1. By assumption equation (\ref{necessary equation for formal}) holds. So we have
$$
L^{-1}(a_i^{[3]}b_i^{[3]} a_k^{[2]})b_k^{[2]} = L^{-1}(b_i^{[3]}a_i^{[3]} a_k^{[2]})b_k^{[2]}=b_i^{[3]}L^{-1}(a_i^{[3]}a_k^{[2]}b_k^{[2]}).
$$
Also, the degree of $L^{-1}(a_i^{[3]}b_i^{[3]}) a_k^{[2]}$ is 2. Since $L^{-1}$ and $\omega$ are isomorphisms between $A^2$ and $A^4$, we have
$$
L^{-1}\big( \omega L^{-1}(a_i^{[3]}b_i^{[3]}) a_k^{[2]} \big)b_k^{[2]} = \big( L^{-1}(a_i^{[3]}b_i^{[3]}) a_k^{[2]} \big)b_k^{[2]} = L^{-1}(b_i^{[3]}a_i^{[3]}) a_k^{[2]}b_k^{[2]}.
$$
Similarly
$$
a_i^{[3]}b_i^{[3]} L^{-1}(a_k^{[2]}b_k^{[2]}) = b_i^{[3]}L^{-1}\big(a_i^{[3]} \omega L^{-1}(a_k^{[2]}b_k^{[2]})\big).
$$
Thus, the two definitions agree.

In the same way, we can verify that the definitions of $f_2\big([x_i^{(2)}],[x_k^{(3)}]\big)$ and $f_2\big([x_i^{(3)}],[x_k^{(3)}]\big)$ are also compatible. So $f_2$ is well-defined. \\[0.1in]

\textbf{$f$ is a quasi-isomorphism.}

Next we prove that $f$ is a quasi-isomorphism. By the definition of $f_1$, it is clear that $m_1f_1=f_1m_1=0$ and $f_1^*$ is an isomorphism. So it remains to show
$$
\sum_{r+s+t=p}(-1)^{r+st}f_{r+t+1}(\textbf{1}^{\otimes r}\otimes m_s\otimes\textbf{1}^{\otimes r})=\sum_{i_1+\ldots+i_r=p}(-1)^sm_r(f_{i_1}\otimes f_{i_2}\otimes\ldots\otimes f_{i_r})
$$
for $p\geq 2$.

\textbf{Case $p=2$}.

When we $p=2$, the equation becomes $f_1m_2=m_1f_2+m_2(f_1\otimes f_1)$. For $z^{(r)}\in PA^r$, $w^{(s)}\in PA^s$ and $r+s\leq 4$,
\begin{align*}
m_1f_2\big([z^{(r)}],[w^{(s)}]\big) &= -\omega L^{-1}(z^{(r)} w^{(s)}) \\
&= (1-\omega L^{-1})(z^{(r)} w^{(s)})-z^{(r)} w^{(s)} \\
&= \big( f_1m_2-m_2(f_1\otimes f_1)\big) \big([z^{(r)}],[w^{(s)}]\big).
\end{align*}

For $z^{(3)}\in PA^3$,
$$
m_1f_2\big([z^{(3)}],[x_i^{(2)}]\big)=\omega\Big( z^{(3)}L^{-1}(a_i^{[2]}b_i^{[2]})-L^{-1}(z^{(3)} a_i^{[2]})b_i^{[2]} \Big).
$$
As the degree of $z^{(3)} a_i^{[2]}$ is 4, $\omega L^{-1}(z^{(3)} a_i^{[2]})=z^{(3)} a_i^{[2]}$. On the other hand,
$$
\omega z^{(3)} L^{-1}(a_i^{[2]}b_i^{[2]}) = z^{(3)}a_i^{[2]}b_i^{[2]}-z^{(3)}(1-\omega L^{-1})(a_i^{[2]}b_i^{[2]}) = z^{(3)}a_i^{[2]}b_i^{[2]}-f_1\big([z^{(3)}]\big) f_1\big([x_i^{(2)}]\big).
$$
Then
$$
m_1f_2\big([z^{(3)}],[x_i^{(2)}]\big) = -f_1\big([z^{(3)}]\big) f_1\big([x_i^{(2)}]\big) =\big( f_1m_2-m_2(f_1\otimes f_1)\big)\big([z^{(3)}],[x_i^{(2)}]\big)
$$
because $[z^{(3)}x_i^{(2)}]=0$.

Follow the same way we have $f_2\big([x_i^{(3)}],[z^{(2)}]\big)=\big( f_1m_2-m_2(f_1\otimes f_1)\big)\big([x_i^{(3)}],[z^{(2)}]\big)$ for $z^{(2)}\in PA^2$.

Next we consider 
$$
f_2\big([y_j^{(3)}],[y_k^{(2)}]\big)=-\theta L^{-1}(y_j^{(3)} y_k^{(2)})-\sum_i L^{-3}\Big( -y_j^{(3)} f_2\big([y_k^{(2)}],[x_i^{(3)}]\big) \Big)(x_i^{(3)})^*.
$$
Since $L^{-3}\Big( -y_j^{(3)} f_2\big([y_k^{(2)}],[x_i^{(3)}]\big) \Big)$ is some constant times $\theta$, $m_1$ acting on the second term is proportional to $\omega (x_i^{(3)})^*$. But $(x_i^{(3)})^*\in PA^3$. So $\omega (x_i^{(3)})^*=0$ and
$$
m_1f_2\big([y_j^{(3)}],[y_k^{(2)}]\big)=-\omega L^{-1}(y_j^{(3)} y_k^{(2)}).
$$
As $y_j^{(3)} y_k^{(2)}\in A^5$ and $L^{-1}:A^5\to A^3$ is injective, $-\omega L^{-1}(y_j^{(3)} y_k^{(2)})=-y_j^{(3)} y_k^{(2)}$. Also $[y_j^{(3)} y_k^{(2)}]=0$. Therefore, we have
$$
m_1f_2\big([y_j^{(3)}],[y_k^{(2)}]\big) = \big( f_1m_2-m_2(f_1\otimes f_1)\big) \big([y_j^{(3)}],[y_k^{(2)}]\big).
$$

Similarly we can show that $f_1m_2=m_1f_2+m_2(f_1\otimes f_1)$ holds when acting on $H^2\big(A[\theta]\big)\otimes H^3\big(A[\theta]\big)$ and $H^3\big(A[\theta]\big)\otimes H^3\big(A[\theta]\big)$.

For any $z^{(r)}\in PA^r$ and $w^{(s)}\in PA^s$, $f_2\big([\theta\omega^{n-r} z^{(r)}],[w^{(s)}]\big)=0$. When $r<s$, the total degree of $\theta\omega^{n-r} z^{(r)} w^{(s)}$ is greater than $2n+1$. So $f_1\big([\theta\omega^{n-r} z^{(r)} w^{(s)}]\big)=f_1\big([\theta\omega^{n-r}z^{(r)}]\big) f_1\big([w^{(s)}]\big)=0$.

When $r\geq s$, we claim that $\omega^{n-r} z^{(r)} w^{(s)}\in \omega^{n-r+s}PA^{r-s}$. Observe that $\omega^{n-r+s}PA^{r-s}$ is the kernel of $\omega: A^{2n-r+s}\to A^{2n-r+s+2}$. Also $\omega(\omega^{n-r} z^{(r)} w^{(s)})=(\omega^{n-r+1} z^{(r)})w^{(s)}=0$. So $f_1\big([\theta\omega^{n-r} z^{(r)} w^{(s)}]\big) = \theta\omega^{n-r} z^{(r)} w^{(s)} = f_1\big([\theta\omega^{n-r}z^{(r)}]\big) f_1\big([w^{(s)}]\big)$.

This proves $f_1m_2=m_1f_2+m_2(f_1\otimes f_1)$ holds when acting on $H^r\big(A[\theta]\big)\otimes H^s\big(A[\theta]\big)$ with $r\geq 4, s\leq 3$. A same discussion shows the equation also holds for the case $r\leq 3, s\geq 4$. Finally, when $r,s\geq 4$, the total degree of both sides are at least 8. So they must be 0 and the equation trivially holds. \\[0.1in]

\textbf{Properties of $f_2$.}

Before talk about the case $p=3$, we go through some properties of $f_2$. The first one is that $f_2$ is graded commutative. Since $m_2$ is graded commutative, it follows from the definition directly that
\begin{align*}
f_2\big([z^{(r)}],[w^{(s)}]\big) &= (-1)^{rs}f_2\big([w^{(s)}],[z^{(r)}]\big), \text{ when }  r+s\leq 4, \\
f_2\big([x_i^{(3)}],[z^{(2)}]\big) &= f_2\big([z^{(2)}],[x_i^{(3)}]\big) \\
f_2\big([x_i^{(3)}],[z^{(3)}]\big) &= f_2\big([z^{(3)}],[x_i^{(3)}]\big).
\end{align*}

By equation (\ref{necessary equation for formal}),
$$
L^{-1}(z^{(3)} a_i^{[2]})b_i^{[2]} = -L^{-1}(a_i^{[2]} z^{(3)})b_i^{[2]} = -a_i^{[2]}L^{-1}(z^{(3)} b_i^{[2]}) = a_i^{[2]}L^{-1}(b_i^{[2]} z^{(3)}).
$$
Then
$$
f_2\big([z^{(3)}],[x_i^{(2)}]\big) = f_2\big([x_i^{(2)}],[z^{(3)}]\big).
$$

A similar discussion shows that
$$
f_2\big([z^{(3)}],[x_i^{(3)}]\big) = -f_2\big([x_i^{(3)}],[z^{(3)}]\big).
$$

We have proved that $f_2\big([y_k^{(2)}],[x_i^{(3)}]\big) = f_2\big([x_i^{(3)}],[y_k^{(2)}]\big)$. It follows that
$$
L^{-3}\Big( -y_j^{(3)} f_2\big([y_k^{(2)}],[x_i^{(3)}]\big) \Big)(x_i^{(3)})^* = L^{-3}\Big( -f_2\big([x_i^{(3)}],[y_k^{(2)}]\big) y_j^{(3)} \Big)(x_i^{(3)})^*.
$$
Thus,
$$
f_2\big([y_j^{(3)}],[y_k^{(2)}]\big) = f_2\big([y_k^{(2)}],[y_j^{(3)}]\big).
$$

Similarly, we prove $f_2\big([y_j^{(3)}],[y_k^{(3)}]\big) = -f_2\big([y_k^{(3)}],[y_j^{(3)}]\big)$ by showing that
$$
y_j^{(3)} f_2\big([y_k^{(3)}],[x_i^{(2)}]\big) = -y_k^{(3)} f_2\big([y_j^{(3)}],[x_i^{(2)}]\big).
$$
This follows from the definition of $f_2$ and (\ref{necessary equation for formal}).
\begin{align*}
y_j^{(3)} f_2\big([y_k^{(3)}],[x_i^{(2)}]\big) &= y_j^{(3)} \theta\Big( y_k^{(3)} L^{-1}(a_i^{[2]}b_i^{[2]})-L^{-1}(y_k^{(3)} a_i^{[2]})b_i^{[2]} \Big) \\
&= -\theta y_j^{(3)} y_k^{(3)} L^{-1}(a_i^{[2]}b_i^{[2]})+\theta L^{-1}(y_k^{(3)} a_i^{[2]}) y_j^{(3)} b_i^{[2]} \\
&= -y_k^{(3)}\theta y_j^{(3)} L^{-1}(a_i^{[2]}b_i^{[2]})+\theta y_k^{(3)} L^{-1}(a_i^{[2]}y_j^{(3)}) b_i^{[2]} \\
&= -y_k^{(3)} f_2\big([y_j^{(3)}],[x_i^{(2)}]\big).
\end{align*}

In the remaining cases $f_2=0$. So it is graded commutative. \\[0.1in]

Besides, we can generalize equation (\ref{f2(H3,H1.H1)}) as follows. For arbitrary $z^{(3)}\in PA^3$ and $w^{(1)},v^{(1)}\in PA^1$, we have
\begin{align}\label{f2(H3,H1.H1) generalized}
f_2\big([z^{(3)}],[w^{(1)} v^{(1)}]\big)=\theta\Big( z^{(3)}L^{-1}(w^{(1)} v^{(1)})-L^{-1}(z^{(3)} w^{(1)})v^{(1)} \Big).
\end{align}

Since $[w^{(1)} v^{(1)}]\in H^1\big(A[\theta]\big)\cdot H^1\big(A[\theta]\big)$, there exist constant numbers $\lambda_i$ such that
$$
[w^{(1)} v^{(1)}]=\sum_i \lambda_i[x_i^{(2)}].
$$
By the definition of $f_2$, we have
$$
f_2\big([z^{(3)}],[w^{(1)} v^{(1)}]\big) = \sum_i \lambda_i f_2\big([z^{(3)}],[x_i^{(2)}] \big) = \sum_i \lambda_i\theta\Big( z^{(3)}L^{-1}(a_i^{[2]}b_i^{[2]})-L^{-1}(z^{(3)} a_i^{[2]})b_i^{[2]} \Big).
$$
On the other hand, as $\sum\lambda_i(a_i^{[2]}b_i^{[2]})-w^{(1)}v^{(1)}$ is exact, it is in the ideal generated by $\omega$. So are all $z^{(3)} a_i^{[2]}$. So we can apply equation (\ref{necessary equation for formal}) and get
$$
\Big( \sum_i \lambda_i z^{(3)} L^{-1}(a_i^{[2]}b_i^{[2]}) \Big)-z^{(3)} L^{-1}(w^{(1)} v^{(1)}) = \Big( \sum_i \lambda_i L^{-1}(z^{(3)} a_i^{[2]})b_i^{[2]} \Big)-L^{-1}(z^{(3)} w^{(1)})v^{(1)}.
$$
This implies (\ref{f2(H3,H1.H1) generalized}).

Similarly, we have
\begin{align}\label{f2(H2,H2.H1) generalized}
f_2\big([z^{(2)}],[w^{(2)} v^{(1)}]\big)=\theta\Big( z^{(2)} L^{-1}(w^{(2)} v^{(1)})-L^{-1}(z^{(2)} w^{(2)})v^{(1)} \Big)
\end{align}
for arbitrary $z^{(2)},w^{(2)}\in PA^2, v^{(1)}\in PA^1$, and
\begin{align}\label{f2(H3,H2.H1) generalized}
f_2\big([z^{(3)}],[w^{(2)} v^{(1)}]\big)=\theta\Big( z^{(3)} L^{-1}(v^{(1)} w^{(2)})-L^{-1}(z^{(3)} v^{(1)})w^{(2)} \Big)
\end{align}
for arbitrary $z^{(3)}\in PA^3, w^{(2)}\in PA^2, v^{(1)}\in PA^1$.

Because $f_2$ is graded commutative, we have similar statements that 
\begin{align}\label{f2(H1.H1,H3) generalized}
f_2\big([z^{(1)} w^{(1)}],[v^{(3)}]\big) = \theta\Big( L^{-1}(z^{(1)} w^{(1)})v^{(3)}-z^{(1)} L^{-1}(w^{(1)} v^{(3)}) \Big),
\end{align}

\begin{align}\label{f2(H2.H1,H2) generalized}
f_2\big([z^{(2)} w^{(1)}],[v^{(2)}]\big) = \theta\Big( L^{-1}(w^{(1)} z^{(2)})v^{(2)}-w^{(1)} L^{-1}(z^{(2)} v^{(2)}) \Big),
\end{align}
and
\begin{align}\label{f2(H2.H1,H3) generalized}
f_2\big([z^{(2)} w^{(1)}],[v^{(3)}]\big) = \theta\Big( L^{-1}(z^{(2)} w^{(1)})v^{(3)}-z^{(2)} L^{-1}(w^{(1)} v^{(3)}) \Big).
\end{align}
\\[0.1in]

\textbf{Case $p=3$.}

When $p=3$, the equation becomes
$$
m_2(f_1\otimes f_2-f_2\otimes f_1)-f_2(m_2\otimes 1-1\otimes m_2)=0.
$$
We will go through all the cases that the left-hand side of the above equation acts on $H^r\big(A[\theta]\big)\otimes H^s\big(A[\theta]\big) \otimes H^t\big(A[\theta]\big)$. Since $f_2$ is graded commutative, we only need to check the case $r\geq t$.

We first consider the case that $r,s,t\leq 3$. That is, for any $z^{(r)}\in PA^r, w^{(s)}\in PA^s, v^{(t)}\in PA^t$, we have
\begin{align}\label{equation for p=3}
\Big( m_2(f_1\otimes f_2-f_2\otimes f_1)-f_2(m_2\otimes 1-1\otimes m_2) \Big) ([z^{(r)}],[w^{(s)}],[v^{(t)}]) = 0.
\end{align}
\\[0.1in]

\noindent\textbf{1)}  When $r+s+t\leq 4$,
\begin{align}\label{m2(f1.f2-f2.f1)}
m_2(f_1\otimes f_2-f_2\otimes f_1)([z^{(r)}],[w^{(s)}],[v^{(t)}]) = \theta \Big( -z^{(r)} L^{-1}(w^{(s)} v^{(t)})+L^{-1}(z^{(r)} w^{(s)})v^{(t)} \Big).
\end{align}
On the other hand,
\begin{align*}
-f_2([z^{(r)} w^{(s)}],[v^{(t)}]) = \theta L^{-1} \Big( (1-\omega L^{-1})(z^{(r)} w^{(s)})v^{(t)} \Big) = \theta L^{-1}(z^{(r)} w^{(s)} v^{(t)})-\theta L^{-1}(z^{(r)} w^{(s)}) v^{(t)}
\end{align*}
because the degree of $L^{-1}(z^{(r)} w^{(s)})v^{(t)}$ is smaller than or equal to 2 and $L^{-1}\circ\omega:A^{\leq2}\to A^{\leq2}$ is the identity map. For the same reason
\begin{align*}
f_2([z^{(r)}],[w^{(s)} v^{(t)}]) &= -\theta L^{-1} \Big( z^{(r)}(1-\omega L^{-1})(w^{(s)} v^{(t)}) \Big) \\
&= -\theta L^{-1}(z^{(r)} w^{(s)} v^{(t)})+\theta z^{(r)} L^{-1}(w^{(s)} v^{(t)}).
\end{align*}
Add all terms together we get (\ref{equation for p=3}). \\[0.1in]

\noindent\textbf{2)}  When $r+s=s+t=4$, i.e. when $(r,s,t)=(1,3,1),(2,2,2),$ or $(3,1,3)$, equation (\ref{m2(f1.f2-f2.f1)}) also holds. But in this case, $[z^{(r)} w^{(s)}]=[w^{(s)} v^{(t)}]=0$. Apply equation (\ref{necessary equation for formal}) we have
$$
z^{(r)} L^{-1}(w^{(s)} v^{(t)})=L^{-1}(z^{(r)} w^{(s)})v^{(t)}.
$$
Thus, we get (\ref{equation for p=3}). \\[0.1in]

\noindent\textbf{3)}  When $r+s+t=5$, we have went through the case $(r,s,t)=(1,3,1)$ and the remaining cases are $(2,2,1),(3,1,1)$ and $(2,1,2)$.

\noindent\textbf{3.1)}  For the first two cases that $t=1$, equation (\ref{m2(f1.f2-f2.f1)}) still holds. As $r+s=4$, $[z^{(r)} w^{(s)}]=0$. By (\ref{f2(H3,H1.H1) generalized}) or (\ref{f2(H2,H2.H1) generalized}),
\begin{align*}
f_2\big([z^{(r)}],[w^{(s)} v^{(t)}]\big) &= \theta\Big( z^{(r)} L^{-1}(w^{(s)} v^{(t)})-L^{-1}(z^{(r)} w^{(s)})v^{(t)} \Big) \\
&= -m_2(f_1\otimes f_2-f_2\otimes f_1)([z^{(r)}],[w^{(s)}],[v^{(t)}]).
\end{align*}
Then we get (\ref{equation for p=3}). \\[0.1in]

\noindent\textbf{3.2)} For the case $(r,s,t)=(2,1,2)$, we also have equation (\ref{m2(f1.f2-f2.f1)}). By (\ref{f2(H2.H1,H2) generalized}) we have
$$
-f_2\big([z^{(2)} w^{(1)}],[v^{(2)}]\big) = -\theta\Big( L^{-1}(w^{(1)} z^{(2)})v^{(2)}-w^{(1)} L^{-1}(z^{(2)} v^{(2)}) \Big),
$$
and by (\ref{f2(H2,H2.H1) generalized})
$$
f_2\big([z^{(2)}],[w^{(1)} v^{(2)}]\big) = f_2\big([z^{(2)}],[v^{(2)} w^{(1)}]\big) = \theta\Big( z^{(2)}L^{-1}(v^{(2)} w^{(1)})-L^{-1}(z^{(2)} v^{(2)})w^{(1)} \Big).
$$
Add all terms together again we get (\ref{equation for p=3}). \\[0.1in]

\noindent\textbf{4)} When $r+s+t=6$, we have went through the case $(r,s,t)=(2,2,2)$. It remains to check $(3,1,2),(3,2,1)$, and $(2,3,1)$.

\noindent\textbf{4.1)} For $(r,s,t)=(3,1,2)$, the proof is similar as before. Equation (\ref{m2(f1.f2-f2.f1)}) holds, $[z^{(3)} w^{(1)}]=0$, and
$$
f_2\big([z^{(3)}],[w^{(1)} v^{(2)}]\big) = f_2\big([z^{(3)}],[v^{(2)} w^{(1)}]\big) = \theta\Big( z^{(3)} L^{-1}(w^{(1)} v^{(2)})-L^{-1}(z^{(3)} w^{(1)})v^{(2)} \Big)
$$
by (\ref{f2(H3,H2.H1) generalized}). Adding them together we get (\ref{equation for p=3}). \\[0.1in]

\noindent\textbf{4.2)} For $(r,s,t)=(3,2,1)$, the definition of $f_2\big([z^{(3)}],[w^{(2)}]\big)$ is depending on whether $[z^{(3)}]$ and $[w^{(2)}]$ are reducible. We will talk about these cases separately. In each case the term $f_2\big([z^{(3)} w^{(2)}],[v^{(1)}]\big)=0$ as $[z^{(3)} w^{(2)}]=0$.

\noindent\textbf{4.2.1)} If $[z^{(3)}]=[x_i^{(3)}]$, we have
\begin{align*}
m_2(f_1\otimes f_2)\big([z^{(3)}],[w^{(2)}],[v^{(1)}]\big) &= -\theta z^{(3)} L^{-1}(w^{(2)} v^{(1)}) \\
&= -\theta a_i^{[3]}b_i^{[3]}L^{-1}(w^{(2)} v^{(1)})+\theta\omega L^{-1}(a_i^{[3]}b_i^{[3]})L^{-1}(w^{(2)} v^{(1)}),
\end{align*}
$$
-f_2\big([z^{(3)}],[w^{(2)}]\big)f_1\big([v^{(1)}]\big) = -\theta\Big( L^{-1}(b_i^{[3]}a_i^{[3]})w^{(2)}v^{(1)}-b_i^{[3]}L^{-1}(a_i^{[3]}w^{(2)})v^{(1)} \Big)
$$
by (\ref{f2(H2.H1,H2)}), and
\begin{align*}
&\quad f_2\big([z^{(3)}],[w^{(2)} v^{(1)}]\big) \\
&= \theta\Big( z^{(3)} L^{-1}(v^{(1)} w^{(2)})-L^{-1}(z^{(3)} v^{(1)})w^{(2)} \Big) \\
&= \theta\Big( (1-\omega L^{-1})(a_i^{[3]}b_i^{[3]})L^{-1}(v^{(1)} w^{(2)}) -L^{-1}\big( (1-\omega L^{-1})(a_i^{[3]}b_i^{[3]}) v^{(1)} \big)w^{(2)} \Big) \\
&= \theta\Big( a_i^{[3]}b_i^{[3]}L^{-1}(v^{(1)} w^{(2)})-\omega L^{-1}(a_i^{[3]}b_i^{[3]})L^{-1}(v^{(1)} w^{(2)}) \\
&\quad\quad -L^{-1}(a_i^{[3]}b_i^{[3]}v^{(1)})w^{(2)}+L^{-1}(a_i^{[3]}b_i^{[3]})v^{(1)} w^{(2)} \Big).
\end{align*}
by (\ref{f2(H3,H2.H1) generalized}). Adding all these terms together we get
$$
\theta b_i^{[3]}L^{-1}(a_i^{[3]}w^{(2)})v^{(1)}-\theta L^{-1}(a_i^{[3]}b_i^{[3]}v^{(1)})w^{(2)}.
$$
According to (\ref{necessary equation for formal}) we have
\begin{align*}
(1-\omega L^{-1})(b_i^{[3]}v^{(1)})L^{-1}(a_i^{[3]}w^{(2)}) &= L^{-1}\Big( (1-\omega L^{-1})(b_i^{[3]}v^{(1)})a_i^{[3]} \Big)w^{(2)} \\
b_i^{[3]}v^{(1)} L^{-1}(a_i^{[3]}w^{(2)})-L^{-1}(b_i^{[3]}v^{(1)})\omega L^{-1}(a_i^{[3]}w^{(2)}) &= L^{-1}(b_i^{[3]}v^{(1)} a_i^{[3]})w^{(2)} -L^{-1}(b_i^{[3]}v^{(1)})a_i^{[3]}w^{(2)} \\
b_i^{[3]}v^{(1)} L^{-1}(a_i^{[3]}w^{(2)}) &= L^{-1}(b_i^{[3]}v^{(1)} a_i^{[3]})w^{(2)} \\
b_i^{[3]}L^{-1}(a_i^{[3]}w^{(2)})v^{(1)} &= L^{-1}(a_i^{[3]}b_i^{[3]}v^{(1)})w^{(2)}.
\end{align*}
So (\ref{equation for p=3}) holds. \\[0.1in]

\noindent\textbf{4.2.2)}  If $[w^{(2)}]=[x_i^{(2)}]$, we have
\begin{align*}
m_2(f_1\otimes f_2)\big([z^{(3)}],[w^{(2)}],[v^{(1)}]\big) &= -\theta z^{(3)} L^{-1}(w^{(2)} v^{(1)}) \\
&= -\theta z^{(3)} L^{-1}\Big( (1-\omega L^{-1})(a_i^{[2]}b_i^{[2]}) v^{(1)} \Big) \\
&=  -\theta z^{(3)} L^{-1}(a_i^{[2]}b_i^{[2]} v^{(1)})+\theta z^{(3)} L^{-1}(a_i^{[2]}b_i^{[2]}) v^{(1)},
\end{align*}

$$
-f_2\big([z^{(3)}],[w^{(2)}]\big)f_1\big([v^{(1)}]\big) = -\theta\Big( z^{(3)}L^{-1}(a_i^{[2]}b_i^{[2]})v^{(1)}-L^{-1}(z^{(3)} a_i^{[2]})b_i^{[2]}v^{(1)} \Big)
$$
by (\ref{f2(H3,H1.H1)}), and
\begin{align*}
&\quad f_2\big([z^{(3)}],[w^{(2)} v^{(1)}]\big) \\
&= \theta\Big( z^{(3)} L^{-1}(v^{(1)} w^{(2)})-L^{-1}(z^{(3)} v^{(1)})w^{(2)} \Big) \\
&= \theta\Big( z^{(3)} L^{-1} \big(v^{(1)} (1-\omega L^{-1})(a_i^{[2]}b_i^{[2]}) \big)-L^{-1}(z^{(3)} v^{(1)})(1-\omega L^{-1})(a_i^{[2]}b_i^{[2]}) \Big) \\
&= \theta\Big( z^{(3)} L^{-1}(v^{(1)} a_i^{[2]}b_i^{[2]})-z^{(3)} v^{(1)} L^{-1}(a_i^{[2]}b_i^{[2]}) -L^{-1}(z^{(3)} v^{(1)})a_i^{[2]}b_i^{[2]}+z^{(3)} v^{(1)} L^{-1}(a_i^{[2]}b_i^{[2]}) \Big) \\
&= \theta\Big( z^{(3)} L^{-1}(v^{(1)} a_i^{[2]}b_i^{[2]}) -L^{-1}(z^{(3)} v^{(1)})a_i^{[2]}b_i^{[2]} \Big)
\end{align*}
by (\ref{f2(H3,H2.H1) generalized}). Adding all these terms together we get
$$
\theta L^{-1}(z^{(3)} a_i^{[2]})b_i^{[2]}v^{(1)}-\theta L^{-1}(z^{(3)} v^{(1)})a_i^{[2]}b_i^{[2]}.
$$
According to (\ref{necessary equation for formal}) we have
$$
L^{-1}(z^{(3)} a_i^{[2]})b_i^{[2]}v^{(1)} = -v^{(1)} L^{-1}(z^{(3)} a_i^{[2]})b_i^{[2]} = -L^{-1}(v^{(1)} z^{(3)})a_i^{[2]}b_i^{[2]} = L^{-1}(z^{(3)} v^{(1)})a_i^{[2]}b_i^{[2]}.
$$
So (\ref{equation for p=3}) holds. \\[0.1in]

\noindent\textbf{4.2.3)} If $[z^{(3)}]=[y_j^{(3)}]$ and $[w^{(2)}]=[y_k^{(2)}]$, the terms become
$$
m_2(f_1\otimes f_2)\big([z^{(3)}],[w^{(2)}],[v^{(1)}]\big) = -\theta y_j^{(3)} L^{-1}(y_k^{(2)} v^{(1)}),
$$
\begin{align*}
-f_2\big([z^{(3)}],[w^{(2)}]\big)f_1\big([v^{(1)}]\big) &= \theta L^{-1}(y_j^{(3)} y_k^{(2)})v^{(1)}+\sum_i L^{-3}\Big( -y_j^{(3)} f_2\big([y_k^{(2)}],[x_i^{(3)}]\big) \Big)(x_i^{(3)})^*v^{(1)},
\end{align*}
and
\begin{align*}
f_2\big([z^{(3)}],[w^{(2)} v^{(1)}]\big) &= \theta\Big( z^{(3)} L^{-1}(v^{(1)} w^{(2)})-L^{-1}(z^{(3)} v^{(1)})w^{(2)} \Big) \\
&= \theta\Big( y_j^{(3)} L^{-1}(v^{(1)} y_k^{(2)})-L^{-1}(y_j^{(3)} v^{(1)})y_k^{(2)} \Big)
\end{align*}
by (\ref{f2(H3,H2.H1) generalized}).  Adding all these terms together we get \begin{align}\label{m3(3,2,1)}
\theta L^{-1}(y_j^{(3)} y_k^{(2)})v^{(1)} -\theta L^{-1}(y_j^{(3)} v^{(1)})y_k^{(2)} +\sum_i L^{-3}\Big( -y_j^{(3)} f_2\big([y_k^{(2)}],[x_i^{(3)}]\big) \Big)(x_i^{(3)})^*v^{(1)}.
\end{align}
Observe that (\ref{m3(3,2,1)}) is an element in $\theta A^4$. So according to Poincar\'e duality, we can show that (\ref{m3(3,2,1)}) vanishes by verifying that it multiplies any element in $A^2$ is 0. When it multiplies $u^{(2)}\in PA^2$, we assume $[v^{(1)}u^{(2)}]=\sum_i \lambda_i [x_i^{(3)}]$ in $H^3\big(A[\theta]\big)$, i.e. $v^{(1)} u^{(2)}=\sum_i \lambda_i x_i^{(3)}+\omega L^{-1}(v^{(1)} u^{(2)})$. Then the last term of (\ref{m3(3,2,1)}) multiplying $u^{(2)}$ is
\begin{align*}
&\quad \sum_{i,l} \lambda_l L^{-3}\Big( -y_j^{(3)} f_2\big([y_k^{(2)}],[x_i^{(3)}]\big) \Big)(x_i^{(3)})^*x_l^{(3)} \\
&\quad +\sum_i L^{-3}\Big( -y_j^{(3)} f_2\big([y_k^{(2)}],[x_i^{(3)}]\big) \Big)(x_i^{(3)})^*\omega L^{-1}(v^{(1)}u^{(2)}) \\
&= \sum_i \lambda_i L^{-3}\Big( -y_j^{(3)} f_2\big([y_k^{(2)}],[x_i^{(3)}]\big) \Big) (-\omega^3) \\
&= y_j^{(3)} f_2\big([y_k^{(2)}],[v^{(1)} u^{(2)}]\big)
\end{align*}
as $(x_i^{(3)})^*\in PA^3$ and $\omega PA^3=0$. Thus, we get
\begin{align}\label{x3*.x3}
\sum_i L^{-3}\Big( -y_j^{(3)} f_2\big([y_k^{(2)}],[x_i^{(3)}]\big) \Big)(x_i^{(3)})^*v^{(1)} u^{(2)} = y_j^{(3)} f_2\big([y_k^{(2)}],[v^{(1)} u^{(2)}]\big)
\end{align}

Then by (\ref{f2(H2,H2.H1) generalized})
$$
y_j^{(3)} f_2\big([y_k^{(2)}],[v^{(1)}u^{(2)}]\big) = y_j^{(3)} \theta\Big( y_k^{(2)}L^{-1}(u^{(2)} v^{(1)})-L^{-1}(y_k^{(2)} u^{(2)})v^{(1)} \Big).
$$
Observe that
$$
y_j^{(3)} \theta L^{-1}(y_k^{(2)} u^{(2)})v^{(1)} = -\theta\omega L^{-1}(y_j^{(3)} v^{(1)}) L^{-1}(y_k^{(2)} u^{(2)}) = -\theta L^{-1}(y_j^{(3)} v^{(1)})y_k^{(2)} u^{(2)}.
$$
which is the second term of (\ref{m3(3,2,1)}) multiplying $u^{(2)}$. On the other hand, the first term of (\ref{m3(3,2,1)}) multiplying $u^{(2)}$ is
$$
\theta L^{-1}(y_j^{(3)} y_k^{(2)})v^{(1)}u^{(2)} = \theta L^{-1}(y_j^{(3)} y_k^{(2)})(1-\omega L^{-1})(v^{(1)}u^{(2)})+\theta L^{-1}(y_j^{(3)} y_k^{(2)})\omega L^{-1}(v^{(1)}u^{(2)}).
$$
Since $y_j^{(3)} y_k^{(2)}\in A^5=\omega^2 PA^1$, $L^{-1}(y_j^{(3)} y_k^{(2)})\in\omega PA^1$. Also $(1-\omega L^{-1})(v^{(1)}u^{(2)})\in PA^3$. So their product vanishes. Thus,
\begin{align*}
\theta L^{-1}(y_j^{(3)} y_k^{(2)})v^{(1)}u^{(2)} &= \theta L^{-1}(y_j^{(3)} y_k^{(2)})\omega L^{-1}(v^{(1)}u^{(2)}) \\
&= \theta y_j^{(3)} y_k^{(2)} L^{-1}(v^{(1)}u^{(2)}) \\
&= -y_j^{(3)} \theta y_k^{(2)}L^{-1}(u^{(2)} v^{(1)}).
\end{align*}
This implies that (\ref{m3(3,2,1)}) multiplying $u^{(2)}$ is 0. When it multiplies $\omega$, the last term vanishes as $(x_i^{(3)})^* \omega=0$. So the remaining terms are
$$
\theta L^{-1}(y_j^{(3)} y_k^{(2)})v^{(1)}\omega -\theta L^{-1}(y_j^{(3)} v^{(1)})y_k^{(2)}\omega = \theta y_j^{(3)} y_k^{(2)}v^{(1)}-\theta y_j^{(3)} v^{(1)}y_k^{(2)} = 0.
$$
Therefore, (\ref{m3(3,2,1)}) vanishes and (\ref{equation for p=3}) holds. \\[0.1in]

\noindent\textbf{4.3)} For $(r,s,t)=(2,3,1)$, we also have to go through the cases that whether $[z^{(2)}],[w^{(3)}]$ are reducible. This time both $[z^{(2)} w^{(3)}]$ and $[w^{(3)} v^{(1)}]$ are 0.

\noindent\textbf{4.3.1)} If $[z^{(2)}]=[x_i^{(2)}]$, we have
\begin{align*}
m_2(f_1\otimes f_2)\big([z^{(2)}],[w^{(3)}],[v^{(1)}]\big) &= -\theta z^{(2)} L^{-1}(w^{(3)} v^{(1)}) \\
&= -\theta a_i^{[2]}b_i^{[2]}L^{-1}(w^{(3)} v^{(1)})+\theta\omega L^{-1}(a_i^{[2]}b_i^{[2]})L^{-1}(w^{(3)} v^{(1)}),
\end{align*}
and
$$
-f_2\big([z^{(2)}],[w^{(3)}]\big)f_1\big([v^{(1)}]\big) = -\theta\Big( L^{-1}(a_i^{[2]}b_i^{[2]})w^{(3)}v^{(1)}-a_i^{[2]}L^{-1}(b_i^{[2]}w^{(3)})v^{(1)} \Big).
$$
By (\ref{necessary equation for formal}) we have 
$$
a_i^{[2]}L^{-1}(b_i^{[2]}w^{(3)})v^{(1)} = a_i^{[2]}b_i^{[2]} L^{-1}(w^{(3)} v^{(1)}).
$$
On the other hand,
$$
\omega L^{-1}(a_i^{[2]}b_i^{[2]})L^{-1}(w^{(3)} v^{(1)}) = L^{-1}(a_i^{[2]}b_i^{[2]})w^{(3)}v^{(1)}.
$$
So we get (\ref{equation for p=3}). \\[0.1in]

\noindent\textbf{4.3.2)}  If $[w^{(3)}]=[x_i^{(3)}]$, we have
\begin{align*}
m_2(f_1\otimes f_2)\big([z^{(2)}],[w^{(3)}],[v^{(1)}]\big) &= -\theta z^{(2)} L^{-1}(w^{(3)} v^{(1)}) \\
&= -\theta z^{(2)} L^{-1}\Big( (1-\omega L^{-1})(a_i^{[3]}b_i^{[3]}) v^{(1)} \Big) \\
&=  -\theta z^{(2)} L^{-1}(a_i^{[3]}b_i^{[3]} v^{(1)})+\theta z^{(2)} L^{-1}(a_i^{[3]}b_i^{[3]}) v^{(1)},
\end{align*}
and
$$
-f_2\big([z^{(2)}],[w^{(3)}]\big)f_1\big([v^{(1)}]\big) = -\theta\Big( z^{(2)}L^{-1}(a_i^{[3]}b_i^{[3]})v^{(1)}-L^{-1}(z^{(2)} a_i^{[3]})b_i^{[3]}v^{(1)} \Big).
$$
According to (\ref{necessary equation for formal}),
\begin{align*}
z^{(2)} L^{-1}(a_i^{[3]}b_i^{[3]} v^{(1)}) &= z^{(2)} L^{-1}\Big( a_i^{[3]}(1-\omega L^{-1})(b_i^{[3]} v^{(1)}) \Big) +z^{(2)} L^{-1}\Big( a_i^{[3]}\omega L^{-1}(b_i^{[3]} v^{(1)}) \Big) \\
&= L^{-1}(z^{(2)} a_i^{[3]})(1-\omega L^{-1})(b_i^{[3]} v^{(1)}) +z^{(2)} a_i^{[3]} L^{-1}(b_i^{[3]} v^{(1)}) \\
&= L^{-1}(z^{(2)} a_i^{[3]})b_i^{[3]} v^{(1)} -L^{-1}(z^{(2)} a_i^{[3]})\omega L^{-1}(b_i^{[3]} v^{(1)}) +z^{(2)} a_i^{[3]} L^{-1}(b_i^{[3]} v^{(1)}) \\
&= L^{-1}(z^{(2)} a_i^{[3]})b_i^{[3]} v^{(1)}.
\end{align*}
So we get (\ref{equation for p=3}). \\[0.1in]

\noindent\textbf{4.3.3)} If $[z^{(2)}]=[y_j^{(2)}]$ and $[w^{(3)}]=[y_k^{(3)}]$, the terms become
$$
m_2(f_1\otimes f_2)\big([z^{(2)}],[w^{(3)}],[v^{(1)}]\big) = -\theta y_j^{(2)} L^{-1}(y_k^{(3)} v^{(1)}),
$$
and
\begin{align*}
-f_2\big([z^{(2)}],[w^{(3)}]\big)f_1\big([v^{(1)}]\big) = \theta L^{-1}(y_j^{(2)} y_k^{(3)})v^{(1)}+\sum_i L^{-3}\Big( -f_2\big([x_i^{(3)}],[y_j^{(2)}]\big)y_k^{(3)} \Big)(x_i^{(3)})^*v^{(1)}.
\end{align*}
We will also show that their sum $m_2(f_1\otimes f_2-f_2\otimes f_1)\big([z^{(2)}],[w^{(3)}],[v^{(1)}]\big)$ is zero by multiplying it to any elements in $A^2$. When multiplying $u^{(2)}\in PA^2$, same as (\ref{x3*.x3}), the last term becomes
\begin{align*}
\sum_i L^{-3}\Big( -f_2\big([x_i^{(3)}],[y_j^{(2)}]\big)y_k^{(3)} \Big)(x_i^{(3)})^*v^{(1)}u^{(2)} &= f_2\big([v^{(1)}u^{(2)}],[y_j^{(2)}]\big)y_k^{(3)} \\
&= \theta\Big( L^{-1}(v^{(1)} u^{(2)})y_j^{(2)}-v^{(1)} L^{-1}(u^{(2)} y_j^{(2)}) \Big)y_k^{(3)}.
\end{align*}
For the other two terms, we have
\begin{align*}
y_j^{(2)} L^{-1}(y_k^{(3)} v^{(1)})u^{(2)} &= \omega L^{-1}(u^{(2)} y_j^{(2)}) L^{-1}(y_k^{(3)} v^{(1)}) \\
&= L^{-1}(u^{(2)} y_j^{(2)}) y_k^{(3)} v^{(1)} \\
&= -v^{(1)} L^{-1}(u^{(2)} y_j^{(2)}) y_k^{(3)},
\end{align*}
and
\begin{align*}
L^{-1}(y_j^{(2)} y_k^{(3)})v^{(1)}u^{(2)} &= L^{-1}(y_j^{(2)} y_k^{(3)})(1-\omega L^{-1})(v^{(1)}u^{(2)}) +L^{-1}(y_j^{(2)} y_k^{(3)})\omega L^{-1}(v^{(1)}u^{(2)}) \\
&= y_j^{(2)} y_k^{(3)} L^{-1}(v^{(1)}u^{(2)}) \\
&= -L^{-1}(v^{(1)}u^{(2)})y_j^{(2)} y_k^{(3)}
\end{align*}
as $ L^{-1}(y_j^{(2)} y_k^{(3)})\in\omega PA^1$ and $(1-\omega L^{-1})(v^{(1)}u^{(2)})\in PA^3$. This shows the sum $m_2(f_1\otimes f_2-f_2\otimes f_1)\big([z^{(2)}],[w^{(3)}],[v^{(1)}]\big)$ multiplying $u^{(2)}$ is 0. It remains to check this sum multiplying $\omega$ is 0.

When multiplying $\omega$, we have $(x_i^{(3)})^*\omega=0$ as $(x_i^{(3)})^*\in PA^3$ and $\omega PA^3=0$. The remaining terms are
\begin{align*}
-\theta y_j^{(2)} L^{-1}(y_k^{(3)} v^{(1)})\omega +\theta L^{-1}(y_j^{(2)} y_k^{(3)})v^{(1)}\omega = -\theta y_j^{(2)}y_k^{(3)} v^{(1)} +\theta y_j^{(2)} y_k^{(3)} v^{(1)} = 0.
\end{align*}
Thus, we get (\ref{equation for p=3}).\\[0.1in]

\noindent\textbf{5)} When $r+s+t=7$, we have went through the case $(r,s,t)=(3,1,3)$. It remains to check $(3,2,2),(3,3,1)$, and $(2,3,2)$.

\noindent\textbf{5.1)} For $(r,s,t)=(3,2,2)$, both $[z^{(3)} w^{(2)}]$ and $[w^{(2)} v^{(2)}]$ are 0. Similar as before we have to go through the cases that whether $[z^{(3)}],[w^{(2)}]$ are reducible.

\noindent\textbf{5.1.1)} If $[z^{(3)}]=[x_i^{(3)}]$, we have
\begin{align*}
m_2(f_1\otimes f_2)\big([z^{(3)}],[w^{(2)}],[v^{(2)}]\big) &= -\theta z^{(3)} L^{-1}(w^{(2)} v^{(2)}) \\
&= -\theta a_i^{[3]}b_i^{[3]}L^{-1}(w^{(2)} v^{(2)})+\theta\omega L^{-1}(a_i^{[3]}b_i^{[3]})L^{-1}(w^{(2)} v^{(2)}),
\end{align*}
and
$$
-f_2\big([z^{(3)}],[w^{(2)}]\big)f_1\big([v^{(2)}]\big) = -\theta\Big( L^{-1}(b_i^{[3]}a_i^{[3]})w^{(2)}v^{(2)}-b_i^{[3]}L^{-1}(a_i^{[3]}w^{(2)})v^{(2)} \Big).
$$
Since
$$
a_i^{[3]}b_i^{[3]}L^{-1}(w^{(2)} v^{(2)}) = b_i^{[3]}a_i^{[3]}L^{-1}(w^{(2)} v^{(2)}) = b_i^{[3]}L^{-1}(a_i^{[3]} w^{(2)}) v^{(2)}
$$
by (\ref{necessary equation for formal}), and
$$
\omega L^{-1}(a_i^{[3]}b_i^{[3]})L^{-1}(w^{(2)} v^{(2)}) =  L^{-1}(b_i^{[3]}a_i^{[3]})w^{(2)}v^{(2)},
$$
we get (\ref{equation for p=3}). \\[0.1in]

\noindent\textbf{5.1.2)}  If $[w^{(2)}]=[x_i^{(2)}]$, we have
\begin{align*}
m_2(f_1\otimes f_2)\big([z^{(3)}],[w^{(2)}],[v^{(2)}]\big) &= -\theta z^{(3)} L^{-1}(w^{(2)} v^{(2)}) \\
&= -\theta z^{(3)} L^{-1}\Big( (1-\omega L^{-1})(a_i^{[2]}b_i^{[2]}) v^{(2)} \Big) \\
&= -\theta z^{(3)} L^{-1}(a_i^{[2]}b_i^{[2]} v^{(2)})+\theta z^{(3)} L^{-1}(a_i^{[2]}b_i^{[2]}) v^{(2)},
\end{align*}
and
$$
-f_2\big([z^{(3)}],[w^{(2)}]\big)f_1\big([v^{(2)}]\big) = -\theta\Big( z^{(3)}L^{-1}(a_i^{[2]}b_i^{[2]})v^{(2)}-L^{-1}(z^{(3)} a_i^{[2]})b_i^{[2]}v^{(2)} \Big).
$$
According to (\ref{necessary equation for formal}) we have
\begin{align*}
z^{(3)} L^{-1}\Big( a_i^{[2]}(1-\omega L^{-1})(b_i^{[2]} v^{(2)}) \Big) &= L^{-1}(z^{(3)} a_i^{[2]})(1-\omega L^{-1})(b_i^{[2]} v^{(2)}) \\
z^{(3)} L^{-1}(a_i^{[2]}b_i^{[2]} v^{(2)})-z^{(3)} a_i^{[2]} L^{-1}(b_i^{[2]} v^{(2)}) &= L^{-1}(z^{(3)} a_i^{[2]}) b_i^{[2]} v^{(2)}-L^{-1}(z^{(3)} a_i^{[2]})\omega L^{-1}(b_i^{[2]} v^{(2)}) \\
z^{(3)} L^{-1}(a_i^{[2]}b_i^{[2]} v^{(2)}) &= L^{-1}(z^{(3)} a_i^{[2]}) b_i^{[2]} v^{(2)}.
\end{align*}
So (\ref{equation for p=3}) holds.

\noindent\textbf{5.1.3)} If $[z^{(3)}]=[y_j^{(3)}]$ and $[w^{(2)}]=[y_k^{(2)}]$, the terms become
$$
m_2(f_1\otimes f_2)\big([z^{(3)}],[w^{(2)}],[v^{(2)}]\big) = -\theta y_j^{(3)} L^{-1}(y_k^{(2)} v^{(2)}),
$$
and
\begin{align*}
-f_2\big([z^{(3)}],[w^{(2)}]\big)f_1\big([v^{(2)}]\big) &= \theta L^{-1}(y_j^{(3)} y_k^{(2)})v^{(2)}+\sum_i L^{-3}\Big( -y_j^{(3)} f_2\big([y_k^{(2)}],[x_i^{(3)}]\big) \Big)(x_i^{(3)})^*v^{(2)}.
\end{align*}
As the sum of these terms is in $\theta A^5$, we will show it is 0 by verifying it vanishes when multiplying any $u^{(1)}\in A^1$. Same as (\ref{x3*.x3}), we have
\begin{align*}
\sum_i L^{-3}\Big( -y_j^{(3)} f_2\big([y_k^{(2)}],[x_i^{(3)}]\big) \Big)(x_i^{(3)})^*v^{(2)}u^{(1)} &= y_j^{(3)} f_2\big([y_k^{(2)}],[v^{(2)} u^{(1)}]\big) \\
&= y_j^{(3)} \theta\Big( y_k^{(2)}L^{-1}(v^{(2)} u^{(1)})-L^{-1}(y_k^{(2)} v^{(2)})u^{(1)} \Big).
\end{align*}
Then
$$
-\theta y_j^{(3)} L^{-1}(y_k^{(2)} v^{(2)})u^{(1)} = y_j^{(3)}\theta L^{-1}(y_k^{(2)} v^{(2)})u^{(1)},
$$
and
\begin{align*}
\theta L^{-1}(y_j^{(3)} y_k^{(2)})v^{(2)}u^{(1)} &= \theta L^{-1}(y_j^{(3)} y_k^{(2)})(1-\omega L^{-1})(v^{(2)}u^{(1)})+\theta L^{-1}(y_j^{(3)} y_k^{(2)})\omega L^{-1}(v^{(2)}u^{(1)}) \\
&= \theta y_j^{(3)} y_k^{(2)} L^{-1}(v^{(2)}u^{(1)}) \\
&= -y_j^{(3)}\theta y_k^{(2)} L^{-1}(v^{(2)}u^{(1)})
\end{align*}
as $L^{-1}(y_j^{(3)} y_k^{(2)})\in \omega PA^1$ and $(1-\omega L^{-1})(v^{(2)}u^{(1)})\in PA^3$. So we get (\ref{equation for p=3}). \\[0.1in]

\noindent\textbf{5.2)} For $(r,s,t)=(3,3,1)$, both $[z^{(3)} w^{(3)}]$ and $[w^{(3)} v^{(1)}]$ are 0. We will also go through the cases that whether $[z^{(3)}],[w^{(2)}]$ are reducible.

\noindent\textbf{5.2.1)} If $[z^{(3)}]=[x_i^{(3)}]$, we have
\begin{align*}
m_2(f_1\otimes f_2)\big([z^{(3)}],[w^{(3)}],[v^{(1)}]\big) &= -\theta z^{(3)} L^{-1}(w^{(3)} v^{(1)}) \\
&= -\theta a_i^{[3]}b_i^{[3]}L^{-1}(w^{(3)} v^{(1)})+\theta\omega L^{-1}(a_i^{[3]}b_i^{[3]})L^{-1}(w^{(3)} v^{(1)}),
\end{align*}
and
$$
-f_2\big([z^{(3)}],[w^{(3)}]\big)f_1\big([v^{(1)}]\big) = -\theta\Big( L^{-1}(a_i^{[3]}b_i^{[3]})w^{(3)}v^{(1)}-a_i^{[3]}L^{-1}(b_i^{[3]}w^{(3)})v^{(1)} \Big).
$$
By (\ref{necessary equation for formal}) we have
$$
a_i^{[3]}b_i^{[3]}L^{-1}(w^{(3)} v^{(1)}) = a_i^{[3]}L^{-1}(b_i^{[3]}w^{(3)}) v^{(1)}.
$$
Also
$$
\omega L^{-1}(a_i^{[3]}b_i^{[3]})L^{-1}(w^{(3)} v^{(1)}) = L^{-1}(a_i^{[3]}b_i^{[3]})w^{(3)}v^{(1)}.
$$
So (\ref{equation for p=3}) holds. \\[0.1in]

\noindent\textbf{5.2.2)}  If $[w^{(3)}]=[x_i^{(3)}]$, we have
\begin{align*}
m_2(f_1\otimes f_2)\big([z^{(3)}],[w^{(3)}],[v^{(1)}]\big) &= -\theta z^{(3)} L^{-1}(w^{(3)} v^{(1)}) \\
&= -\theta z^{(3)} L^{-1}\Big( (1-\omega L^{-1})(a_i^{[3]}b_i^{[3]}) v^{(1)} \Big) \\
&=  -\theta z^{(3)} L^{-1}(a_i^{[3]}b_i^{[3]} v^{(1)})+\theta z^{(3)} L^{-1}(a_i^{[3]}b_i^{[3]}) v^{(1)},
\end{align*}
and
$$
-f_2\big([z^{(3)}],[w^{(3)}]\big)f_1\big([v^{(1)}]\big) = -\theta\Big( z^{(3)}L^{-1}(b_i^{[3]}a_i^{[3]})v^{(1)}-L^{-1}(z^{(3)} b_i^{[3]})a_i^{[3]}v^{(1)} \Big).
$$
By (\ref{necessary equation for formal}) we have
\begin{align*}
z^{(3)} L^{-1} \big( b_i^{[3]} (1-\omega L^{-1}) (a_i^{[3]} v^{(1)}) \big) &= L^{-1}(z^{(3)} b_i^{[3]}) (1-\omega L^{-1}) (a_i^{[3]} v^{(1)}) \\
z^{(3)} L^{-1}(b_i^{[3]} a_i^{[3]} v^{(1)}) -z^{(3)} b_i^{[3]} L^{-1} (a_i^{[3]} v^{(1)}) &= L^{-1}(z^{(3)} b_i^{[3]}) a_i^{[3]} v^{(1)} -L^{-1}(z^{(3)} b_i^{[3]})\omega L^{-1} (a_i^{[3]} v^{(1)}) \\
z^{(3)} L^{-1}(a_i^{[3]} b_i^{[3]} v^{(1)}) &= L^{-1}(z^{(3)} b_i^{[3]}) a_i^{[3]} v^{(1)}.
\end{align*}
Thus, (\ref{equation for p=3}) holds. \\[0.1in]

\noindent\textbf{5.2.3)} If $[z^{(3)}]=[y_j^{(3)}]$ and $[w^{(3)}]=[y_k^{(3)}]$, the terms become
$$
m_2(f_1\otimes f_2)\big([z^{(3)}],[w^{(3)}],[v^{(1)}]\big) = -\theta y_j^{(3)} L^{-1}(y_k^{(3)} v^{(1)}),
$$
and
\begin{align*}
-f_2\big([z^{(3)}],[w^{(3)}]\big)f_1\big([v^{(1)}]\big) &= \theta L^{-1}(y_j^{(3)} y_k^{(3)})v^{(1)}+\sum_i L^{-3}\Big( y_j^{(3)} f_2\big([y_k^{(3)}],[x_i^{(2)}]\big) \Big)(x_i^{(2)})^* v^{(1)}.
\end{align*}
We will also prove their sum is 0 by multiplying $u^{(1)}\in A^1$. Similar as (\ref{x3*.x3}), we assume $[v^{(1)} u^{(1)}] = \sum_i \lambda_i[x_i^{(2)}]$, i.e. $v^{(1)}u^{(1)}=\sum_i \lambda_i x_i^{(2)}+\omega L^{-1}(v^{(1)}u^{(1)})$. Then the last term multiplying $u^{(1)}$ is
$$
\sum_{i,l} \lambda_l L^{-3}\Big( y_j^{(3)} f_2\big([y_k^{(3)}],[x_i^{(2)}]\big) \Big)(x_i^{(2)})^* x_l^{(2)} +\sum_i L^{-3}\Big( y_j^{(3)} f_2\big([y_k^{(3)}],[x_i^{(2)}]\big) \Big)(x_i^{(2)})^* \omega L^{-1}(v^{(1)}u^{(1)}).
$$
Its second term vanishes because $(x_i^{(2)})^*$ is in $\omega PA^2$ and becomes 0 when multiplies $\omega$. The remaining first term is
\begin{align*}
\sum_i \lambda_i L^{-3}\Big( y_j^{(3)} f_2\big([y_k^{(3)}],[x_i^{(2)}]\big) \Big)\omega^3 = \sum_i \lambda_i y_j^{(3)} f_2\big([y_k^{(3)}],[x_i^{(2)}]\big) = y_j^{(3)} f_2\big([y_k^{(3)}],[v^{(1)}u^{(1)}]\big).
\end{align*}
Thus, we get
\begin{align}\label{x2*.x2}
\sum_i L^{-3}\Big( y_j^{(3)} f_2\big([y_k^{(3)}],[x_i^{(2)}]\big) \Big)(x_i^{(2)})^* v^{(1)} u^{(1)} = y_j^{(3)} f_2\big([y_k^{(3)}],[v^{(1)}u^{(1)}]\big).
\end{align}
By (\ref{f2(H3,H1.H1) generalized}),
$$
y_j^{(3)} f_2\big([y_k^{(3)}],[v^{(1)}u^{(1)}]\big) = y_j^{(3)}\theta \Big( y_k^{(3)} L^{-1}(v^{(1)} u^{(1)})-L^{-1}(y_k^{(3)} v^{(1)})u^{(1)} \Big).
$$
Observes that
$$
\theta y_j^{(3)} L^{-1}(y_k^{(3)} v^{(1)})u^{(1)} = -y_j^{(3)}\theta L^{-1}(y_k^{(3)} v^{(1)})u^{(1)},
$$
and
\begin{align*}
\theta L^{-1}(y_j^{(3)} y_k^{(3)})v^{(1)}u^{(1)} &= \theta L^{-1}(y_j^{(3)} y_k^{(3)})(1-\omega L^{-1})(v^{(1)}u^{(1)})+\theta L^{-1}(y_j^{(3)} y_k^{(3)})\omega L^{-1}(v^{(1)}u^{(1)}) \\
&= \theta L^{-1}(y_j^{(3)} y_k^{(3)})\omega L^{-1}(v^{(1)}u^{(1)}) \\
&= -y_j^{(3)} \theta y_k^{(3)} L^{-1}(v^{(1)}u^{(1)})
\end{align*}
as $L^{-1}(y_j^{(3)} y_k^{(3)})\in \omega^2 A^0$ and $(1-\omega L^{-1})(v^{(1)}u^{(1)})\in PA^2$. So we get (\ref{equation for p=3}). \\[0.1in]

\noindent\textbf{5.3)} For $(r,s,t)=(2,3,2)$, both $[z^{(2)} w^{(3)}]$ and $[w^{(3)} v^{(2)}]$ are 0. This time we need to go through all the cases that whether $[z^{(2)}],[w^{(3)}],[v^{(2)}]$ are reducible.

\noindent\textbf{5.3.1)} If $[w^{(3)}]=[x_i^{(3)}]$, we have
$$
m_2(f_1\otimes f_2)\big([z^{(2)}],[w^{(3)}],[v^{(2)}]\big) = z^{(2)} \theta\Big( L^{-1}(b_i^{[3]}a_i^{[3]})v^{(2)}-b_i^{[3]}L^{-1}(a_i^{[3]}v^{(2)}) \Big),
$$
and
$$
-f_2\big([z^{(2)}],[w^{(3)}]\big)f_1\big([v^{(2)}]\big) = -\theta\Big( z^{(2)}L^{-1}(a_i^{[3]}b_i^{[3]})v^{(2)}-L^{-1}(z^{(2)} a_i^{[3]})b_i^{[3]}v^{(2)} \Big).
$$
As
$$
z^{(2)} \theta L^{-1}(b_i^{[3]}a_i^{[3]})v^{(2)} = \theta z^{(2)}L^{-1}(a_i^{[3]}b_i^{[3]})v^{(2)},
$$
and by (\ref{necessary equation for formal})
$$
z^{(2)} \theta b_i^{[3]}L^{-1}(a_i^{[3]}v^{(2)}) = \theta b_i^{[3]} z^{(2)} L^{-1}(a_i^{[3]}v^{(2)}) = \theta b_i^{[3]} L^{-1}(z^{(2)} a_i^{[3]}) v^{(2)} = \theta L^{-1}(z^{(2)} a_i^{[3]}) b_i^{[3]} v^{(2)},
$$
we get (\ref{equation for p=3}). \\[0.1in]

\noindent\textbf{5.3.2)} If $[z^{(2)}]=[x_i^{(2)}],[w^{(3)}]=[y_j^{(3)}],[v^{(2)}]=[x_k^{(2)}]$, we have
\begin{align*}
&\quad m_2(f_1\otimes f_2)\big([z^{(2)}],[w^{(3)}],[v^{(2)}]\big) \\
&= (1-\omega L^{-1})(a_i^{[2]}b_i^{[2]}) \theta\Big( w^{(3)}L^{-1}(a_k^{[2]}b_k^{[2]})-L^{-1}(w^{(3)} a_k^{[2]})b_k^{[2]} \Big) \\
&= \theta\Big( a_i^{[2]}b_i^{[2]} w^{(3)}L^{-1}(a_k^{[2]}b_k^{[2]}) -a_i^{[2]}b_i^{[2]} L^{-1}(w^{(3)} a_k^{[2]})b_k^{[2]} \\
&\quad\quad -\omega L^{-1}(a_i^{[2]}b_i^{[2]}) w^{(3)}L^{-1}(a_k^{[2]}b_k^{[2]}) +\omega L^{-1}(a_i^{[2]}b_i^{[2]}) L^{-1}(w^{(3)} a_k^{[2]})b_k^{[2]} \Big),
\end{align*}
and
\begin{align*}
&\quad -f_2\big([z^{(2)}],[w^{(3)}]\big)f_1\big([v^{(2)}]\big) \\
&= -\theta\Big( L^{-1}(a_i^{[2]}b_i^{[2]})w^{(3)}-a_i^{[2]}L^{-1}(b_i^{[2]}w^{(3)}) \Big) (1-\omega L^{-1})(a_k^{[2]}b_k^{[2]}) \\
&= -\theta\Big( L^{-1}(a_i^{[2]}b_i^{[2]})w^{(3)} a_k^{[2]}b_k^{[2]} -L^{-1}(a_i^{[2]}b_i^{[2]})w^{(3)} \omega L^{-1}(a_k^{[2]}b_k^{[2]}) \\
&\quad\quad\quad -a_i^{[2]}L^{-1}(b_i^{[2]}w^{(3)}) a_k^{[2]}b_k^{[2]} +a_i^{[2]}L^{-1}(b_i^{[2]}w^{(3)}) \omega L^{-1}(a_k^{[2]}b_k^{[2]}) \Big)
\end{align*}
Observe that
$$
a_i^{[2]}b_i^{[2]} w^{(3)}L^{-1}(a_k^{[2]}b_k^{[2]}) = a_i^{[2]}L^{-1}(b_i^{[2]}w^{(3)}) \omega L^{-1}(a_k^{[2]}b_k^{[2]})
$$
as $b_i^{[2]}w^{(3)}\in A^4$ and $\omega\circ L^{-1}:A^4\to A^4$ is an isomorphism. For the same reason
$$
\omega L^{-1}(a_i^{[2]}b_i^{[2]}) L^{-1}(w^{(3)} a_k^{[2]})b_k^{[2]} = L^{-1}(a_i^{[2]}b_i^{[2]})w^{(3)} a_k^{[2]}b_k^{[2]}.
$$
Also by (\ref{necessary equation for formal}) we have
$$
a_i^{[2]}b_i^{[2]} L^{-1}(w^{(3)} a_k^{[2]})b_k^{[2]} = a_i^{[2]}L^{-1}(b_i^{[2]}w^{(3)}) a_k^{[2]}b_k^{[2]}.
$$
So the sum of all these terms are 0. Then (\ref{equation for p=3}) holds. \\[0.1in]

\noindent\textbf{5.3.3)} If $[z^{(2)}]=[x_i^{(2)}],[w^{(3)}]=[y_j^{(3)}],[v^{(2)}]=[y_k^{(2)}]$, we have
\begin{align*}
&\quad m_2(f_1\otimes f_2)\big([z^{(2)}],[w^{(3)}],[v^{(2)}]\big) \\
&= (1-\omega L^{-1})(a_i^{[2]}b_i^{[2]}) \Bigg( -\theta L^{-1}(y_j^{(3)} y_k^{(2)})-\sum_l L^{-3}\Big( -y_j^{(3)} f_2\big([y_k^{(2)}],[x_l^{(3)}]\big) \Big)(x_l^{(3)})^* \Bigg) \\
&= -\theta a_i^{[2]}b_i^{[2]} L^{-1}(y_j^{(3)} y_k^{(2)}) -a_i^{[2]}b_i^{[2]} \sum_l L^{-3}\Big( -y_j^{(3)} f_2\big([y_k^{(2)}],[x_l^{(3)}]\big) \Big)(x_l^{(3)})^* \\
&\quad +\theta\omega L^{-1}(a_i^{[2]}b_i^{[2]}) L^{-1}(y_j^{(3)} y_k^{(2)}) +\omega L^{-1}(a_i^{[2]}b_i^{[2]}) \sum_l L^{-3}\Big( -y_j^{(3)} f_2\big([y_k^{(2)}],[x_l^{(3)}]\big) \Big)(x_l^{(3)})^* \\
&= -\theta a_i^{[2]}b_i^{[2]} L^{-1}(y_j^{(3)} y_k^{(2)}) -a_i^{[2]}b_i^{[2]} \sum_l L^{-3}\Big( -y_j^{(3)} f_2\big([y_k^{(2)}],[x_l^{(3)}]\big) \Big)(x_l^{(3)})^* +\theta L^{-1}(a_i^{[2]}b_i^{[2]}) y_j^{(3)} y_k^{(2)}
\end{align*}
as $\omega\circ L^{-1}:A^5\to A^5$ is an isomorphism and $\omega (x_l^{(3)})^*=0$. On the other hand,
\begin{align*}
-f_2\big([z^{(2)}],[w^{(3)}]\big)f_1\big([v^{(2)}]\big) = -\theta\Big( L^{-1}(a_i^{[2]}b_i^{[2]})y_j^{(3)}-a_i^{[2]}L^{-1}(b_i^{[2]}y_j^{(3)}) \Big) y_k^{(2)}.
\end{align*}
Adding them together we get
$$
-\theta a_i^{[2]}b_i^{[2]} L^{-1}(y_j^{(3)} y_k^{(2)}) +\theta a_i^{[2]}L^{-1}(b_i^{[2]}y_j^{(3)})y_k^{(2)} -a_i^{[2]}b_i^{[2]} \sum_l L^{-3}\Big( -y_j^{(3)} f_2\big([y_k^{(2)}],[x_l^{(3)}]\big) \Big)(x_l^{(3)})^*.
$$
Similar as before, we will prove it is 0 by multiplying $u^{(1)}\in A^1$. Same as (\ref{x3*.x3}), we have
\begin{align*}
&\quad a_i^{[2]}b_i^{[2]} \sum_l L^{-3}\Big( -y_j^{(3)} f_2\big([y_k^{(2)}],[x_l^{(3)}]\big) \Big)(x_l^{(3)})^* u^{(1)} \\
&= y_j^{(3)} f_2\big([y_k^{(2)}],[a_i^{[2]}b_i^{[2]} u^{(1)}]\big) \\
&= y_j^{(3)} \theta\Big( y_k^{(2)} L^{-1} \big( (1-\omega L^{-1})(a_i^{[2]}b_i^{[2]}) u^{(1)} \big) -L^{-1} \big( y_k^{(2)} (1-\omega L^{-1})(a_i^{[2]}b_i^{[2]}) \big) u^{(1)} \Big) \\
&= \theta \Big( -y_j^{(3)} y_k^{(2)} L^{-1} (a_i^{[2]}b_i^{[2]} u^{(1)}) +y_j^{(3)} y_k^{(2)} L^{-1} (a_i^{[2]}b_i^{[2]}) u^{(1)} +y_j^{(3)} L^{-1} (y_k^{(2)} a_i^{[2]}b_i^{[2]})u^{(1)} \\
&\quad\quad -y_j^{(3)} y_k^{(2)} L^{-1}(a_i^{[2]}b_i^{[2]}) u^{(1)} \Big) \\
&= \theta \Big( -y_j^{(3)} y_k^{(2)} L^{-1} (a_i^{[2]}b_i^{[2]} u^{(1)}) +y_j^{(3)} L^{-1} (y_k^{(2)} a_i^{[2]}b_i^{[2]})u^{(1)} \Big)
\end{align*}
Observe that
\begin{align*}
&\quad a_i^{[2]}b_i^{[2]} L^{-1}(y_j^{(3)} y_k^{(2)})u^{(1)} \\
&= L^{-1}(y_j^{(3)} y_k^{(2)})(1-\omega L^{-1})(a_i^{[2]}b_i^{[2]} u^{(1)}) +L^{-1}(y_j^{(3)} y_k^{(2)})\omega L^{-1}(a_i^{[2]}b_i^{[2]} u^{(1)}) \\
&= L^{-1}(y_j^{(3)} y_k^{(2)})\omega L^{-1}(a_i^{[2]}b_i^{[2]} u^{(1)}) \\
&= y_j^{(3)} y_k^{(2)} L^{-1}(a_i^{[2]}b_i^{[2]} u^{(1)})
\end{align*}
as $L^{-1}(y_j^{(3)} y_k^{(2)})\in \omega A^1$ and $(1-\omega L^{-1})(a_i^{[2]}b_i^{[2]} u^{(1)}) \in PA^3$. On the other hand, by (\ref{necessary equation for formal}),
$$
L^{-1}(y_j^{(3)} b_i^{[2]})(1-\omega L^{-1})(y_k^{(2)} a_i^{[2]}) u^{(1)} = y_j^{(3)} L^{-1}\big( b_i^{[2]}(1-\omega L^{-1})(y_k^{(2)} a_i^{[2]}) \big) u^{(1)}.
$$
Its left-hand side is
\begin{align*}
& \quad L^{-1}(y_j^{(3)} b_i^{[2]}) y_k^{(2)} a_i^{[2]} u^{(1)} -L^{-1}(y_j^{(3)} b_i^{[2]})\omega L^{-1} (y_k^{(2)} a_i^{[2]}) u^{(1)} \\
&= -a_i^{[2]} L^{-1}(b_i^{[2]} y_j^{(3)}) y_k^{(2)} u^{(1)} -y_j^{(3)} b_i^{[2]} L^{-1} (y_k^{(2)} a_i^{[2]}) u^{(1)},
\end{align*}
and right-hand side is
\begin{align*}
& \quad y_j^{(3)} L^{-1}( b_i^{[2]} y_k^{(2)} a_i^{[2]} )u^{(1)} -y_j^{(3)} L^{-1}\big( b_i^{[2]} \omega L^{-1} (y_k^{(2)} a_i^{[2]}) \big) u^{(1)} \\
&= -y_j^{(3)} L^{-1}( y_k^{(2)} a_i^{[2]} b_i^{[2]} )u^{(1)} -y_j^{(3)} b_i^{[2]} L^{-1} (y_k^{(2)} a_i^{[2]}) u^{(1)}.
\end{align*}
So we get
$$
a_i^{[2]} L^{-1}(b_i^{[2]} y_j^{(3)}) y_k^{(2)} u^{(1)} = y_j^{(3)} L^{-1}( y_k^{(2)} a_i^{[2]} b_i^{[2]} )u^{(1)},
$$
and then (\ref{equation for p=3}) holds. \\[0.1in]

\noindent\textbf{5.3.4)} If $[z^{(2)}]=[y_j^{(2)}],[w^{(3)}]=[y_k^{(3)}],[v^{(2)}]=[x_i^{(2)}]$, we can get (\ref{equation for p=3}) similarly as the above case, because $m_2$ and $f_2$ are graded commutative. \\[0.1in]

\noindent\textbf{5.3.5)} If $[z^{(2)}]=[y_j^{(2)}],[w^{(3)}]=[y_k^{(3)}],[v^{(2)}]=[y_l^{(2)}]$, we have
\begin{align*}
&\quad m_2(f_1\otimes f_2)\big([z^{(2)}],[w^{(3)}],[v^{(2)}]\big) \\
&= y_j^{(2)} \Bigg( -\theta L^{-1}(y_k^{(3)} y_l^{(2)})-\sum_i L^{-3}\Big( -y_k^{(3)} f_2\big([y_l^{(2)}],[x_i^{(3)}]\big) \Big)(x_i^{(3)})^* \Bigg) \\
&= -\theta y_j^{(2)} L^{-1}(y_k^{(3)} y_l^{(2)}) -y_j^{(2)} \sum_i L^{-3}\Big( -y_k^{(3)} f_2\big([y_l^{(2)}],[x_i^{(3)}]\big) \Big)(x_i^{(3)})^*. \\
\end{align*}
On the other hand,
\begin{align*}
-f_2\big([z^{(2)}],[w^{(3)}]\big)f_1\big([v^{(2)}]\big) = \theta L^{-1}(y_j^{(2)} y_k^{(3)})y_l^{(2)}+\sum_i L^{-3}\Big( -f_2\big([x_i^{(3)}],[y_j^{(2)}]\big)y_k^{(3)} \Big)(x_i^{(3)})^*y_l^{(2)}.
\end{align*}
By (\ref{necessary equation for formal}) $y_j^{(2)} L^{-1}(y_k^{(3)} y_l^{(2)})=L^{-1}(y_j^{(2)} y_k^{(3)})y_l^{(2)}$. So the sum is
$$
-y_j^{(2)} \sum_i L^{-3}\Big( -y_k^{(3)} f_2\big([y_l^{(2)}],[x_i^{(3)}]\big) \Big)(x_i^{(3)})^*+\sum_i L^{-3}\Big( -f_2\big([x_i^{(3)}],[y_j^{(2)}]\big)y_k^{(3)} \Big)(x_i^{(3)})^*y_l^{(2)}.
$$
We will show it is 0 by multiplying $u^{(1)}\in A^1$. Same as (\ref{x3*.x3}), we have
\begin{align*}
y_j^{(2)} \sum_i L^{-3}\Big( -y_k^{(3)} f_2\big([y_l^{(2)}],[x_i^{(3)}]\big) \Big)(x_i^{(3)})^* u^{(1)} &= y_k^{(3)} f_2\big([y_l^{(2)}],[y_j^{(2)}u^{(1)}]\big) \\
&= y_k^{(3)} \theta\Big( y_l^{(2)} L^{-1}(y_j^{(2)} u^{(1)})-L^{-1}(y_l^{(2)} y_j^{(2)})u^{(1)} \Big),
\end{align*}
and
\begin{align*}
\sum_i L^{-3}\Big( -f_2\big([x_i^{(3)}],[y_j^{(2)}]\big)y_k^{(3)} \Big)(x_i^{(3)})^*y_l^{(2)} u^{(1)} &= f_2\big([y_l^{(2)} u^{(1)}],[y_j^{(2)}]\big)y_k^{(3)} \\
&= \theta\Big( L^{-1}(u^{(1)} y_l^{(2)})y_j^{(2)}-u^{(1)} L^{-1}(y_l^{(2)} y_j^{(2)}) \Big)y_k^{(3)}.
\end{align*}
Observe that
\begin{align*}
\theta y_j^{(2)} L^{-1}(y_k^{(3)} y_l^{(2)})u^{(1)} &= \theta L^{-1}(y_k^{(3)} y_l^{(2)})y_j^{(2)} u^{(1)} \\
&= \theta L^{-1}(y_k^{(3)} y_l^{(2)})(1-\omega L^{-1})(y_j^{(2)} u^{(1)})+\theta L^{-1}(y_k^{(3)} y_l^{(2)})\omega L^{-1}(y_j^{(2)} u^{(1)}) \\
&= \theta L^{-1}(y_k^{(3)} y_l^{(2)})\omega L^{-1}(y_j^{(2)} u^{(1)}) \\
&= -y_k^{(3)} \theta y_l^{(2)} L^{-1}(y_j^{(2)} u^{(1)}).
\end{align*}
Similarly we have
$$
\theta L^{-1}(y_j^{(2)} y_k^{(3)})y_l^{(2)} u^{(1)} = \theta y_j^{(2)} y_k^{(3)} L^{-1}(y_l^{(2)} u^{(1)}) = -\theta L^{-1}(u^{(1)} y_l^{(2)})y_j^{(2)} y_k^{(3)}.
$$
By (\ref{necessary equation for formal}) $y_j^{(2)} L^{-1}(y_k^{(3)} y_l^{(2)})u^{(1)}=L^{-1}(y_j^{(2)} y_k^{(3)})y_l^{(2)} u^{(1)}$. Then we have
$$
y_k^{(3)} \theta y_l^{(2)} L^{-1}(y_j^{(2)} u^{(1)}) = \theta L^{-1}(u^{(1)} y_l^{(2)})y_j^{(2)} y_k^{(3)}.
$$
Therefore, the sum is 0 and (\ref{equation for p=3}) holds. \\[0.1in]

\noindent\textbf{6)} When $r+s+t=8$, we need to check the cases $(r,s,t)=(3,2,3)$ and $(3,3,2)$.

\noindent\textbf{6.1)} For $(r,s,t)=(3,2,3)$, $[z^{(3)} w^{(2)}]=[w^{(2)} v^{(3)}]=0$. We will also go through the cases that whether $[z^{(3)}],[w^{(2)}],[v^{(3)}]$ are reducible.

\noindent\textbf{6.1.1)} If $[w^{(2)}]=[x_i^{(2)}]$, we have
$$
m_2(f_1\otimes f_2)\big([z^{(3)}],[w^{(2)}],[v^{(3)}]\big) = -z^{(3)} \theta\Big( L^{-1}(a_i^{[2]}b_i^{[2]})v^{(3)}-a_i^{[2]}L^{-1}(b_i^{[2]}v^{(3)}) \Big),
$$
and
$$
-f_2\big([z^{(3)}],[w^{(2)}]\big)f_1\big([v^{(3)}]\big) = -\theta\Big( z^{(3)} L^{-1}(a_i^{[2]}b_i^{[2]})v^{(3)}-L^{-1}(z^{(3)} a_i^{[2]})b_i^{[2]}v^{(3)} \Big).
$$
As $\omega\circ L^{-1}:A^4\to A^4$ is an isomorphism,
$$
z^{(3)} \theta a_i^{[2]}L^{-1}(b_i^{[2]}v^{(3)}) = -\theta\omega L^{-1}(z^{(3)} a_i^{[2]})L^{-1}(b_i^{[2]}v^{(3)}) = -\theta L^{-1}(z^{(3)} a_i^{[2]})b_i^{[2]}v^{(3)}.
$$
So (\ref{equation for p=3}) holds. \\[0.1in]

\noindent\textbf{6.1.2)} If $[z^{(3)}]=[x_i^{(3)}],[w^{(2)}]=[y_j^{(2)}],[v^{(3)}]=[x_k^{(3)}]$, we have
\begin{align*}
&\quad m_2(f_1\otimes f_2)\big([z^{(3)}],[w^{(2)}],[v^{(3)}]\big) \\
&= -(1-\omega L^{-1})(a_i^{[3]}b_i^{[3]}) \theta\Big( w^{(2)} L^{-1}(a_k^{[3]}b_k^{[3]})-L^{-1}(w^{(2)} a_k^{[3]})b_k^{[3]} \Big) \\
&= \theta\Big( a_i^{[3]}b_i^{[3]} w^{(2)} L^{-1}(a_k^{[3]}b_k^{[3]}) -a_i^{[3]}b_i^{[3]} L^{-1}(w^{(2)} a_k^{[3]})b_k^{[3]} \\
&\quad\quad -\omega L^{-1}(a_i^{[3]}b_i^{[3]}) w^{(2)} L^{-1}(a_k^{[3]}b_k^{[3]}) +\omega L^{-1}(a_i^{[3]}b_i^{[3]}) L^{-1}(w^{(2)} a_k^{[3]})b_k^{[3]} \Big),
\end{align*}
and
\begin{align*}
&\quad -f_2\big([z^{(3)}],[w^{(2)}]\big)f_1\big([v^{(3)}]\big) \\
&= -\theta\Big( L^{-1}(b_i^{[3]}a_i^{[3]})w^{(2)}-b_i^{[3]}L^{-1}(a_i^{[3]}w^{(2)}) \Big) (1-\omega L^{-1})(a_k^{[3]}b_k^{[3]}) \\
&= -\theta\Big( L^{-1}(b_i^{[3]}a_i^{[3]})w^{(2)} a_k^{[3]}b_k^{[3]} -L^{-1}(b_i^{[3]}a_i^{[3]})w^{(2)} \omega L^{-1}(a_k^{[3]}b_k^{[3]}) \\
&\quad\quad\quad -b_i^{[3]}L^{-1}(a_i^{[3]}w^{(2)}) a_k^{[3]}b_k^{[3]} +b_i^{[3]}L^{-1}(a_i^{[3]}w^{(2)}) \omega L^{-1}(a_k^{[3]}b_k^{[3]}) \Big).
\end{align*}

Since $\omega\circ L^{-1}:A^4\to A^4$ is an isomorphism, we have
$$
a_i^{[3]}b_i^{[3]} w^{(2)} L^{-1}(a_k^{[3]}b_k^{[3]}) = b_i^{[3]}L^{-1}(a_i^{[3]}w^{(2)}) \omega L^{-1}(a_k^{[3]}b_k^{[3]}),
$$
and
$$
\omega L^{-1}(a_i^{[3]}b_i^{[3]}) L^{-1}(w^{(2)} a_k^{[3]})b_k^{[3]} = L^{-1}(b_i^{[3]}a_i^{[3]})w^{(2)} a_k^{[3]}b_k^{[3]}.
$$
Also by (\ref{necessary equation for formal}),
$$
a_i^{[3]}b_i^{[3]} L^{-1}(w^{(2)} a_k^{[3]})b_k^{[3]} = b_i^{[3]}a_i^{[3]} L^{-1}(w^{(2)} a_k^{[3]})b_k^{[3]} = b_i^{[3]} L^{-1}(a_i^{[3]} w^{(2)}) a_k^{[3]}b_k^{[3]}. 
$$
So the sum of all these terms are 0, and we have (\ref{equation for p=3}). \\[0.1in]

\noindent\textbf{6.1.3)} If $[z^{(3)}]=[x_i^{(3)}],[w^{(2)}]=[y_j^{(2)}],[v^{(3)}]=[y_k^{(3)}]$, we have
\begin{align*}
&\quad m_2(f_1\otimes f_2)\big([z^{(3)}],[w^{(2)}],[v^{(3)}]\big) \\
&= -x_i^{(3)} \Bigg( -\theta L^{-1}(y_j^{(2)} y_k^{(3)})-\sum_l L^{-3}\Big( -f_2\big([x_l^{(3)}],[y_j^{(2)}]\big) y_k^{(3)} \Big)(x_l^{(3)})^* \Bigg) \\
&= -\theta x_i^{(3)} L^{-1}(y_j^{(2)} y_k^{(3)}) +x_i^{(3)}\sum_l L^{-3}\Big( -f_2\big([x_l^{(3)}],[y_j^{(2)}]\big) y_k^{(3)} \Big)(x_l^{(3)})^* \\
&= -\sum_l L^{-3}\Big( -f_2\big([x_l^{(3)}],[y_j^{(2)}]\big) y_k^{(3)} \Big)x_i^{(3)}(x_l^{(3)})^* \\
&= -L^{-3}\Big( -f_2\big([x_i^{(3)}],[y_j^{(2)}]\big) y_k^{(3)} \Big) \omega^3 \\
&= f_2\big([x_i^{(3)}],[y_j^{(2)}]\big) y_k^{(3)} 
\end{align*}
as $L^{-1}(y_j^{(2)} y_k^{(3)})\in \omega A^1$ and $x_i^{(3)}\in PA^3$. On the other hand, 
$$
-f_2\big([z^{(3)}],[w^{(2)}]\big)f_1\big([v^{(3)}]\big)=-f_2\big([x_i^{(3)}],[y_j^{(2)}]\big) y_k^{(3)}.
$$
So their sum is 0 and (\ref{equation for p=3}) holds. \\[0.1in]

\noindent\textbf{6.1.4)} If $[z^{(3)}]=[y_j^{(3)}],[w^{(2)}]=[y_k^{(2)}],[v^{(3)}]=[x_i^{(3)}]$, we can get (\ref{equation for p=3}) by the above case since $m_2,f_2$ are graded commutative. \\[0.1in]

\noindent\textbf{6.1.5)} If $[z^{(3)}]=[y_j^{(3)}],[w^{(2)}]=[y_k^{(2)}],[v^{(3)}]=[y_l^{(3)}]$, we have
\begin{equation}\label{m3(all irreducible)}
\begin{split}
& \quad m_2(f_1\otimes f_2)\big([z^{(3)}],[w^{(2)}],[v^{(3)}]\big) \\
&= -y_j^{(3)} \Bigg( -\theta L^{-1}(y_k^{(2)} y_l^{(3)})-\sum_i L^{-3}\Big( -f_2\big([x_i^{(3)}],[y_k^{(2)}]\big) y_l^{(3)} \Big)(x_i^{(3)})^* \Bigg).
\end{split}
\end{equation}
Both terms are 0. As $y_j^{(3)}\in PA^3$ and $L^{-1}(y_k^{(2)} y_l^{(3)})\in \omega A^1$, their product are 0. Also $y_j^{(3)}(x_i^{(3)})^*=0$ for any $i$. For the same reason,
$$
-f_2\big([z^{(3)}],[w^{(2)}]\big)f_1\big([v^{(3)}]\big)=0.
$$
Therefore, (\ref{equation for p=3}) holds. \\[0.1in]

\noindent\textbf{6.2)} The last non-trivial case is $(r,s,t)=(3,3,2)$. In this case both $[z^{(3)} w^{(3)}]$ and $[w^{(3)} v^{(2)}]$ are 0. We also have to go through the cases that whether $[z^{(3)}],[w^{(3)}],[v^{(2)}]$ are reducible. \\[0.1in]

\noindent\textbf{6.2.1)} If $[w^{(3)}]=[x_i^{(3)}]$, we have
$$
m_2(f_1\otimes f_2)\big([z^{(3)}],[w^{(3)}],[v^{(2)}]\big) = -z^{(3)} \theta\Big( L^{-1}(b_i^{[3]}a_i^{[3]})v^{(2)}-b_i^{[3]}L^{-1}(a_i^{[3]}v^{(2)}) \Big),
$$
and
$$
-f_2\big([z^{(3)}],[w^{(2)}]\big)f_1\big([v^{(3)}]\big) = -\theta\Big( z^{(3)} L^{-1}(b_i^{[3]}a_i^{[3]})v^{(2)}-L^{-1}(z^{(3)} b_i^{[3]})a_i^{[3]}v^{(2)} \Big).
$$
Observe that
$$
z^{(3)}\theta b_i^{[3]}L^{-1}(a_i^{[3]}v^{(2)}) = -\theta\omega L^{-1}(z^{(3)} b_i^{[3]}) L^{-1}(a_i^{[3]}v^{(2)}) = -\theta L^{-1}(z^{(3)} b_i^{[3]})a_i^{[3]}v^{(2)}.
$$
So the sum all all terms is 0 and (\ref{equation for p=3}) holds. \\[0.1in]

\noindent\textbf{6.2.2)} If $[z^{(3)}]=[x_i^{(3)}],[w^{(3)}]=[y_j^{(3)}],[v^{(2)}]=[x_k^{(2)}]$, we have
\begin{align*}
&\quad m_2(f_1\otimes f_2)\big([z^{(3)}],[w^{(3)}],[v^{(2)}]\big) \\
&= -(1-\omega L^{-1})(a_i^{[3]}b_i^{[3]}) \theta\Big( w^{(3)} L^{-1}(a_k^{[2]}b_k^{[2]})-L^{-1}(w^{(3)} a_k^{[2]})b_k^{[2]} \Big) \\
&= \theta\Big( a_i^{[3]}b_i^{[3]} w^{(3)} L^{-1}(a_k^{[2]}b_k^{[2]}) -a_i^{[3]}b_i^{[3]} L^{-1}(w^{(3)} a_k^{[2]})b_k^{[2]} \\
&\quad\quad -\omega L^{-1}(a_i^{[3]}b_i^{[3]}) w^{(3)} L^{-1}(a_k^{[2]}b_k^{[2]}) +\omega L^{-1}(a_i^{[3]}b_i^{[3]}) L^{-1}(w^{(3)} a_k^{[2]})b_k^{[2]} \Big),
\end{align*}
and
\begin{align*}
&\quad -f_2\big([z^{(3)}],[w^{(3)}]\big)f_1\big([v^{(2)}]\big) \\
&= -\theta\Big( L^{-1}(a_i^{[3]}b_i^{[3]})w^{(3)}-a_i^{[3]}L^{-1}(b_i^{[3]}w^{(3)}) \Big) (1-\omega L^{-1})(a_k^{[2]}b_k^{[2]}) \\
&= -\theta\Big( L^{-1}(a_i^{[3]}b_i^{[3]})w^{(3)} a_k^{[2]}b_k^{[2]} -L^{-1}(a_i^{[3]}b_i^{[3]})w^{(3)} \omega L^{-1}(a_k^{[2]}b_k^{[2]}) \\
&\quad\quad\quad -a_i^{[3]}L^{-1}(b_i^{[3]}w^{(3)}) a_k^{[2]}b_k^{[2]} +a_i^{[3]}L^{-1}(b_i^{[3]}w^{(3)}) \omega L^{-1}(a_k^{[2]}b_k^{[2]}) \Big).
\end{align*}
Since $\omega\circ L^{-1}:A^4\to A^4$ is an isomorphism, we have
$$
a_i^{[3]}b_i^{[3]} w^{(3)} L^{-1}(a_k^{[2]}b_k^{[2]}) = a_i^{[3]}L^{-1}(b_i^{[3]}w^{(3)}) \omega L^{-1}(a_k^{[2]}b_k^{[2]}),
$$
and
$$
\omega L^{-1}(a_i^{[3]}b_i^{[3]}) L^{-1}(w^{(3)} a_k^{[2]})b_k^{[2]} = L^{-1}(a_i^{[3]}b_i^{[3]})w^{(3)} a_k^{[2]}b_k^{[2]}.
$$
Also by (\ref{necessary equation for formal}),
$$
a_i^{[3]}b_i^{[3]} L^{-1}(w^{(3)} a_k^{[2]})b_k^{[2]} = a_i^{[3]}L^{-1}(b_i^{[3]}w^{(3)}) a_k^{[2]}b_k^{[2]}.
$$
So the sum of all terms is 0, and we have (\ref{equation for p=3}). \\[0.1in]

\noindent\textbf{6.2.3)} If $[z^{(3)}]=[x_i^{(3)}],[w^{(3)}]=[y_j^{(3)}],[v^{(2)}]=[y_k^{(2)}]$, we have
\begin{align*}
&\quad m_2(f_1\otimes f_2)\big([z^{(3)}],[w^{(3)}],[v^{(2)}]\big) \\
&= -x_i^{(3)} \Bigg( -\theta L^{-1}(y_j^{(3)} y_k^{(2)})-\sum_l L^{-3}\Big( -y_j^{(3)} f_2\big([y_k^{(2)}],[x_l^{(3)}]\big) \Big)(x_l^{(3)})^* \Bigg) \\
&= -\theta x_i^{(3)} L^{-1}(y_j^{(3)} y_k^{(2)}) +x_i^{(3)}\sum_l L^{-3}\Big( -y_j^{(3)} f_2\big([y_k^{(2)}],[x_l^{(3)}]\big) \Big)(x_l^{(3)})^* \\
&= -\sum_l L^{-3}\Big( -y_j^{(3)} f_2\big([y_k^{(2)}],[x_l^{(3)}]\big) \Big)x_i^{(3)}(x_l^{(3)})^* \\
&= -L^{-3}\Big( -y_j^{(3)} f_2\big([y_k^{(2)}],[x_i^{(3)}]\big) \Big)\omega^3 \\
&= y_j^{(3)} f_2\big([y_k^{(2)}],[x_i^{(3)}]\big) \\
&= f_1\big( [y_j^{(3)}] \big) f_2\big([x_i^{(3)}],[y_k^{(2)}]\big)
\end{align*}
as $L^{-1}(y_j^{(3)} y_k^{(2)})\in \omega A^1$ and $x_i^{(3)}\in PA^3$. On the other hand,
$$
-f_2\big([z^{(3)}],[w^{(3)}]\big)f_1\big([v^{(2)}]\big) = -f_2 \big([x_i^{(3)}],[y_j^{(3)}]\big)f_1\big([y_k^{(2)}]\big) = f_2 \big([y_j^{(3)}],[x_i^{(3)}]\big)f_1\big([y_k^{(2)}]\big).
$$
In case 6.2.1) we have proved that
$$
\Big( m_2(f_1\otimes f_2-f_2\otimes f_1)-f_2(m_2\otimes 1-1\otimes m_2) \Big) ([y_j^{(3)}],[x_i^{(3)}],[y_k^{(2)}]) = 0.
$$
So their sum is 0 and (\ref{equation for p=3}) holds. \\[0.1in]

\noindent\textbf{6.2.4)} If $[z^{(3)}]=[y_j^{(3)}],[w^{(2)}]=[y_k^{(3)}],[v^{(3)}]=[x_i^{(2)}]$, we have
\begin{align*}
m_2(f_1\otimes f_2)\big([z^{(3)}],[w^{(3)}],[v^{(2)}]\big) = -y_j^{(3)} f_2\big([y_k^{(3)}],[x_i^{(2)}]\big).
\end{align*}
On the other hand,
\begin{align*}
-f_2\big([z^{(3)}],[w^{(3)}]\big)f_1\big([v^{(2)}]\big) &= \theta L^{-1}(y_j^{(3)} y_k^{(3)})x_i^{(2)}+\sum_l L^{-3}\Big( y_j^{(3)} f_2\big([y_k^{(3)}],[x_l^{(2)}]\big) \Big)(x_l^{(2)})^*x_i^{(2)} \\
&= L^{-3}\Big( y_j^{(3)} f_2\big([y_k^{(3)}],[x_i^{(2)}]\big) \Big)(x_i^{(2)})^*x_i^{(2)} \\
&= y_j^{(3)} f_2\big([y_k^{(3)}],[x_i^{(2)}]\big).
\end{align*}
So their sum is 0 and (\ref{equation for p=3}) holds. \\[0.1in]

\noindent\textbf{6.2.5)} If $[z^{(3)}]=[y_j^{(3)}],[w^{(3)}]=[y_k^{(3)}],[v^{(3)}]=[y_l^{(2)}]$, we have
\begin{align*}
m_2(f_1\otimes f_2)\big([z^{(3)}],[w^{(3)}],[v^{(2)}]\big) &= -f_1\big( [y_j^{(3)}] \big) f_2\big([y_k^{(3)}],[y_l^{(2)}]\big) \\
&= -f_1\big( [y_j^{(3)}] \big) f_2\big([y_l^{(2)}],[y_k^{(3)}]\big) \\
&= m_2(f_1\otimes f_2)\big( [y_j^{(3)}],[y_l^{(2)}],[y_k^{(3)}]\big).
\end{align*}
It vanishes because we have proved that both sides of equation (\ref{m3(all irreducible)}) are 0. On the other hand
\begin{align*}
-f_2\big([z^{(3)}],[w^{(3)}]\big)f_1\big([v^{(2)}]\big) = \theta L^{-1}(y_j^{(3)} y_k^{(3)})y_l^{(2)}+\sum_i L^{-3}\Big( y_j^{(3)} f_2\big([y_k^{(3)}],[x_i^{(2)}]\big) \Big)(x_i^{(2)})^*y_l^{(2)}.
\end{align*}
Similar as the discussion of (\ref{m3(all irreducible)}), since $L^{-1}(y_j^{(3)} y_k^{(3)})\in \omega^2 A^0$ and $y_l^{(2)}\in PA^2$, their product is 0. Also $(x_i^{(2)})^*y_l^{(2)}=0$. So we get (\ref{equation for p=3}). \\[0.1in]

\noindent\textbf{7)} When $r+s+t=9$, the left-hand side of (\ref{equation for p=3}) has degree 8. So it must be 0. \\[0.1in]

Now we have went through all cases that $r,s,t\leq 3$. Then we consider the other cases. Let $\alpha\in H^{(r)}\big(A[\theta]\big),\beta\in H^{(s)}\big(A[\theta]\big)$ and $\gamma\in H^{(t)}\big(A[\theta]\big)$. We want to verify that
\begin{align}\label{equation for p=3 general}
\Big( m_2(f_1\otimes f_2-f_2\otimes f_1)-f_2(m_2\otimes 1-1\otimes m_2) \Big)(\alpha,\beta,\gamma) = 0.
\end{align}

\noindent\textbf{8)}
When $s\geq 4$, by the definition of $f_2$, $f_2(\alpha,\beta),f_2(\beta,\gamma),f_2(\alpha\beta,\gamma)$ and $f_2(\alpha,\beta\gamma)$ are all 0. So (\ref{equation for p=3 general}) holds. \\[0.1in]

\noindent\textbf{9)}
When $r\geq 4$, $f_2(\alpha,\beta),f_2(\alpha\beta,\gamma)$ and $f_2(\alpha,\beta\gamma)$ are 0. The only non-trivial term is $m_2(f_1\otimes f_2)(\alpha,\beta,\gamma)$. But by definition both $f_1(\alpha)$ and $f_2(\beta,\gamma)$ are in $\theta A$. So their product is 0. Then we get (\ref{equation for p=3 general}).

Since $m_2$ and $f_2$ are graded commutative, (\ref{equation for p=3 general}) also holds when $t\geq 4$. Therefore, we have verified all the cases of $p=3$. \\[0.1in]

\noindent\textbf{Case $p=4$ and higher}.

When $p=4$, the only non-trivial term of 
\begin{align}\label{equation for p=4}
\sum_{i_1+\ldots+i_r=p}(-1)^sm_r(f_{i_1}\otimes f_{i_2}\otimes\ldots\otimes f_{i_r})-\sum_{r+s+t=p}(-1)^{r+st}f_{r+t+1}(\textbf{1}^{\otimes r}\otimes m_s\otimes\textbf{1}^{\otimes r})
\end{align}
is $m_2(f_2\otimes f_2)$. By the definition of $f_2$, it is always in $\theta A$. As the product of two elements in $\theta A$ is 0, (\ref{equation for p=4}) vanishes.

When $p\geq 5$, we claim that every term of (\ref{equation for p=4}) is 0. For the term $m_r(f_{i_1}\otimes f_{i_2}\otimes\ldots\otimes f_{i_r})$, if it is non-trivial then $r\leq 2$. In this case at least one of $i_1$ and $i_2$ is greater than or equal to 3. Hence, $f_{i_1}=0$ or $f_{i_2}=0$.

For the term $f_{r+t+1}(\textbf{1}^{\otimes r}\otimes m_s\otimes\textbf{1}^{\otimes r})$, if it is non-trivial then $r+t+1\leq 2$. In this case $s\geq 4$. So $m_s=0$.

Therefore, we have verified that $f:H^*\big(A[\theta]\big)\to A[\theta]$ is indeed an $A_\infty$-quasi-isomorphism.

This completes the proof of Theorem \ref{condition equivalent to formal}.
\end{proof}

\begin{bibdiv}
\begin{biblist}[\normalsize]

\bib{bt}{article}{
   author={Babenko, I. K.},
   author={Ta\u{\i}manov, I. A.},
   title={On nonformal simply connected symplectic manifolds},
   language={Russian, with Russian summary},
   journal={Sibirsk. Mat. Zh.},
   volume={41},
   date={2000},
   number={2},
   pages={253--269, i},
   issn={0037-4474},
   translation={
      journal={Siberian Math. J.},
      volume={41},
      date={2000},
      number={2},
      pages={204--217},
      issn={0037-4466},
   },
}

\bib{bt2}{article}{
   author={Babenko, I. K.},
   author={Ta\u{\i}manov, I. A.},
   title={Massey products in symplectic manifolds},
   language={Russian, with Russian summary},
   journal={Mat. Sb.},
   volume={191},
   date={2000},
   number={8},
   pages={3--44},
   issn={0368-8666},
   translation={
      journal={Sb. Math.},
      volume={191},
      date={2000},
      number={7-8},
      pages={1107--1146},
      issn={1064-5616},
   },
}

\bib{biswas et al}{article}{
   author={Biswas, Indranil},
   author={Fern\'{a}ndez, Marisa},
   author={Mu\~{n}oz, Vicente},
   author={Tralle, Aleksy},
   title={On formality of Sasakian manifolds},
   journal={J. Topol.},
   volume={9},
   date={2016},
   number={1},
   pages={161--180},
   issn={1753-8416},
}

\bib{caval}{article}{
   author={Cavalcanti, Gil Ramos},
   title={The Lefschetz property, formality and blowing up in symplectic
   geometry},
   journal={Trans. Amer. Math. Soc.},
   volume={359},
   date={2007},
   number={1},
   pages={333--348},
   issn={0002-9947},
}

\bib{caval2}{article}{
   author={Cavalcanti, Gil Ramos},
   title={Formality of $k$-connected spaces in $4k+3$ and $4k+4$ dimensions},
   journal={Math. Proc. Cambridge Philos. Soc.},
   volume={141},
   date={2006},
   number={1},
   pages={101--112},
   issn={0305-0041},
}

\bib{caval3}{article}{
   author={Cavalcanti, Gil R.},
   author={Fern\'{a}ndez, Marisa},
   author={Mu\~{n}oz, Vicente},
   title={Symplectic resolutions, Lefschetz property and formality},
   journal={Adv. Math.},
   volume={218},
   date={2008},
   number={2},
   pages={576--599},
   issn={0001-8708},
}

\bibitem{caval4}
Gil R. Cavalcanti,
\newblock {\em New aspects of $dd^c$-lemma}.
\newblock PhD thesis, Oxford University, 2004.
\newblock arXiv:math/0501406v1.

\bib{cn}{article}{
   author={Crowley, Diarmuid},
   author={Nordstr\"{o}m, Johannes},
   title={The rational homotopy type of $(n-1)$-connected manifolds of
   dimension up to $5n-3$},
   journal={J. Topol.},
   volume={13},
   date={2020},
   number={2},
   pages={539--575},
   issn={1753-8416},
}

\bib{deli}{article}{
   author={Deligne, Pierre},
   author={Griffiths, Phillip},
   author={Morgan, John},
   author={Sullivan, Dennis},
   title={Real homotopy theory of K\"{a}hler manifolds},
   journal={Invent. Math.},
   volume={29},
   date={1975},
   number={3},
   pages={245--274},
   issn={0020-9910},
}

\bib{Dranishnikov et al}{article}{
   author={Dranishnikov, Alex N.},
   author={Rudyak, Yuli B.},
   title={Examples of non-formal closed $(k-1)$-connected manifolds of
   dimensions $\ge4k-1$},
   journal={Proc. Amer. Math. Soc.},
   volume={133},
   date={2005},
   number={5},
   pages={1557--1561},
   issn={0002-9939},
}

\bib{Fernandez et al}{article}{
   author={Fern\'{a}ndez, Marisa},
   author={Mu\~{n}oz, Vicente},
   title={On non-formal simply connected manifolds},
   journal={Topology Appl.},
   volume={135},
   date={2004},
   number={1-3},
   pages={111--117},
   issn={0166-8641},
}

\bib{gompf}{article}{
   author={Gompf, Robert E.},
   title={A new construction of symplectic manifolds},
   journal={Ann. of Math. (2)},
   volume={142},
   date={1995},
   number={3},
   pages={527--595},
   issn={0003-486X},
}

\bib{compgeo}{book}{
   author={Huybrechts, Daniel},
   title={Complex geometry. An introduction},
   series={Universitext},
   publisher={Springer-Verlag, Berlin},
   date={2005},
   pages={xii+309},
   isbn={3-540-21290-6},
}

\bib{irtu}{article}{
   author={Ib\'{a}\~{n}ez, Ra\'{u}l},
   author={Rudyak, Yuli},
   author={Tralle, Aleksy},
   author={Ugarte, Luis},
   title={On certain geometric and homotopy properties of closed symplectic
   manifolds},
   booktitle={Proceedings of the Pacific Institute for the Mathematical
   Sciences Workshop ``Invariants of Three-Manifolds'' (Calgary, AB, 1999)},
   journal={Topology Appl.},
   volume={127},
   date={2003},
   number={1-2},
   pages={33--45},
   issn={0166-8641},
}

\bib{kadei}{article}{
   author={Kadeishvili, T. V.},
   title={The algebraic structure in the homology of an $A(\infty
   )$-algebra},
   language={Russian, with English and Georgian summaries},
   journal={Soobshch. Akad. Nauk Gruzin. SSR},
   volume={108},
   date={1982},
   number={2},
   pages={249--252 (1983)},
   issn={0132-1447},
}

\bib{keller}{article}{
   author={Keller, Bernhard},
   title={Introduction to $A$-infinity algebras and modules},
   journal={Homology Homotopy Appl.},
   volume={3},
   date={2001},
   number={1},
   pages={1--35},
   issn={1532-0081},
}

\bib{lo}{article}{
   author={Lupton, Gregory},
   author={Oprea, John},
   title={Symplectic manifolds and formality},
   journal={J. Pure Appl. Algebra},
   volume={91},
   date={1994},
   number={1-3},
   pages={193--207},
   issn={0022-4049},
}

\bib{merkulov}{article}{
   author={Merkulov, S. A.},
   title={Formality of canonical symplectic complexes and Frobenius
   manifolds},
   journal={Internat. Math. Res. Notices},
   date={1998},
   number={14},
   pages={727--733},
   issn={1073-7928},
}

\bib{miller}{article}{
   author={Miller, Timothy James},
   title={On the formality of $(k-1)$-connected compact manifolds of
   dimension less than or equal to $4k-2$},
   journal={Illinois J. Math.},
   volume={23},
   date={1979},
   number={2},
   pages={253--258},
   issn={0019-2082},
}

\bib{tt}{article}{
   author={Tanaka, Hiro Lee},
   author={Tseng, Li-Sheng},
   title={Odd sphere bundles, symplectic manifolds, and their intersection
   theory},
   journal={Camb. J. Math.},
   volume={6},
   date={2018},
   number={3},
   pages={213--266},
   issn={2168-0930},
}

\bib{tty}{article}{
   author={Tsai, Chung-Jun},
   author={Tseng, Li-Sheng},
   author={Yau, Shing-Tung},
   title={Cohomology and Hodge theory on symplectic manifolds: III},
   journal={J. Differential Geom.},
   volume={103},
   date={2016},
   number={1},
   pages={83--143},
   issn={0022-040X},
}

\end{biblist}
\end{bibdiv}

\vskip 1cm
\noindent
{Yau Mathematical Sciences Center, Tsinghua University\\
Hai Dian District, Beijing, China}\\
{\it Email address:}~{\tt jiaweiz1990@mail.tsinghua.edu.cn}
\end{document}